\let\subset\subseteq
\newtheorem{theorem}{Theorem}
\newtheorem{lemma}[theorem]{Lemma}
\newtheorem{proposition}[theorem]{Proposition}
\newtheorem{corollary}[theorem]{Corollary}
\newtheorem{claim}{Claim}
\theoremstyle{definition}
\newtheorem{definition}[theorem]{Definition}
\theoremstyle{remark}
\newcommand{\oldqed}{}
\def\endofClaim{\hfill\scalebox{.6}{$\Box$}}
\newenvironment{claimproof}[1][Proof]{
  \renewcommand{\oldqed}{\qedsymbol}
  \renewcommand{\qedsymbol}{\endofClaim}
  \begin{proof}[#1]
}{
  \end{proof}
  \renewcommand{\qedsymbol}{\oldqed}
}
\newcommand{\Ex}{\mathbb{E}}
\newcommand{\tW}{\widetilde{W}}
\newcommand{\tcW}{\widetilde{\mathcal{W}}}
\newcommand{\eps}{\varepsilon}
\renewcommand{\epsilon}{\varepsilon}
\newcommand{\NGa}{N_{\Gamma}}
\newcommand{\Vij}{V_{i,j}}
\newcommand{\epsa}{\eps^{\ast}}
\newcommand{\epsaa}{\eps^{\ast\ast}}
\newcommand{\nua}{\nu^{\ast}}
\newcommand{\kq}{q}
\newcommand{\symd}{\triangle}
\renewcommand{\Pr}{\mathbb{P}}
\newcommand{\Ca}{C^{\ast}}
\newcommand{\scalefactor}{0.5}
\def\itm#1{\rm ({#1})} 
\def\itmit#1{\itm{\it #1\,}} 
\def\rom{\itmit{\roman{*}}} 
\def\abc{\itmit{\alph{*}}}
\def\itmarab#1{\mbox{\itm{{\it #1\,}\arabic{*}\hspace{.05em}}}}
\def\itmarabp#1#2{\mbox{\itm{{\it #1\,}\arabic{*}#2\hspace{.05em}}}}
\newcommand{\ao}{\alpha_{\scalebox{\scalefactor}{$\mathrm{OSRIL}$}}}
\newcommand{\at}{\alpha_{\scalebox{\scalefactor}{$\mathrm{TSRIL}$}}}
\newcommand{\eo}{\eps_{\scalebox{\scalefactor}{$\mathrm{OSRIL}$}}}
\newcommand{\et}{\eps_{\scalebox{\scalefactor}{$\mathrm{TSRIL}$}}}
\newcommand{\eBL}{\eps_{\scalebox{\scalefactor}{$\mathrm{BL}$}}}
\newcommand{\CBL}{C_{\scalebox{\scalefactor}{$\mathrm{BL}$}}}
\newcommand{\cI}{\mathcal I}
\newcommand{\cJ}{\mathcal J}
\newcommand{\cL}{\mathcal L}
\newcommand{\cV}{\mathcal V}
\newcommand{\cW}{\mathcal W}
\newcommand{\Dom}{\mathrm{Dom}}
\newcommand{\im}{\mathrm{Im}}
\newcommand{\dist}{\mathrm{dist}}
\newcommand{\dcup}{\mathbin{\text{\mbox{\makebox[0mm][c]{\hphantom{$\cup$}$\cdot$}$\cup$}}}}
\newcommand\restr[2]{{
  \left.\kern-\nulldelimiterspace 
  #1 
  \vphantom{\big|} 
  \right|_{#2} 
  }}
\title[A spanning bandwidth theorem in random graphs]{A spanning bandwidth theorem in random graphs}
\author[P. Allen]{Peter Allen}
\address{(PA) London School of Economics, Department of Mathematics, Houghton Street, London WC2A 2AE, UK}
\email{p.d.allen@lse.ac.uk}
\author[J. B\"ottcher]{Julia B\"ottcher}
\address{(JB) London School of Economics, Department of Mathematics, Houghton Street, London WC2A 2AE, UK}
\email{j.boettcher@lse.ac.uk}
\author[J. Ehrenm\"uller]{Julia Ehrenm\"uller}
\address{(JE) Technische Universität Hamburg, Institut f\"ur Mathematik, Am Schwarzenberg-Campus 3, 21073 Hamburg, Germany }
\email{julia.ehrenmueller@tuhh.de}
\author[J. Schnitzer]{Jakob Schnitzer}
\address{(JS) Universität Hamburg, Fachbereich Mathematik, Bundesstraße 55, 20146 Hamburg, Germany}
\email{jakob.schnitzer@uni-hamburg.de}
\author[A. Taraz]{Anusch Taraz}
\address{(AT) Technische Universität Hamburg, Institut f\"ur Mathematik, Am Schwarzenberg-Campus 3, 21073 Hamburg, Germany }
\email{taraz@tuhh.de}
\date{\today}
\begin{document}
\begin{abstract}
 The bandwidth theorem [Mathematische Annalen, 343(1):175--205, 2009] states
that any $n$-vertex graph~$G$ with minimum degree $\big(\tfrac{k-1}{k}+o(1)\big)n$ contains all $n$-vertex $k$-colourable graphs~$H$ with bounded maximum degree and bandwidth $o(n)$. In [arXiv:1612.00661] a random graph analogue of this statement is proved: for $p\gg \big(\tfrac{\log n}{n}\big)^{1/\Delta}$ a.a.s.\ each spanning subgraph~$G$ of
$G(n,p)$ with minimum degree $\big(\tfrac{k-1}{k}+o(1)\big)pn$ contains all
$n$-vertex $k$-colourable graphs~$H$ with maximum degree $\Delta$,
bandwidth $o(n)$, and at least $C p^{-2}$ vertices not contained in any
triangle. This restriction on vertices in triangles is necessary, but limiting.

In this paper we consider how it can be avoided. A special case of our main
result is that,
under the same conditions, if additionally all vertex neighbourhoods in $G$
contain many copies of $K_\Delta$ then we can drop the restriction on $H$
that $Cp^{-2}$ vertices should not be in triangles.
\end{abstract}
\maketitle




\section{Introduction}
One major topic of research in extremal graph theory is to determine
minimum degree conditions on a graph~$G$ which force it to contain copies
of a spanning subgraph $H$. The primal example of such a theorem is Dirac's
theorem~\cite{dirac1952}, which states that if $\delta(G)\ge\tfrac12 v(G)$
then $G$ is Hamiltonian. Optimal results of this type were established for a wide range
of other spanning subgraphs~$H$ with bounded maximum degree such as powers of
Hamilton cycles, trees, or $F$-factors for any fixed graph~$F$ (see
e.g.~\cite{kuhnsurvey} for a survey). One characteristic all these graphs~$H$ have
in common is that they have sublinear
bandwidth. The \emph{bandwidth} of a labelling of the vertex set of~$H$ by integers $1, \ldots, n$
is the minimum~$b$ such that  $|i-j| \leq b$ for every edge $ij$
of~$H$. The bandwidth of~$H$ is the minimum bandwidth among all its labellings.
The relevance of this parameter was highlighted in~\cite{bottcher2009proof}, where the following asymptotically optimal general result was proved.

\begin{theorem}[Bandwidth Theorem~\cite{bottcher2009proof}]
\label{thm:bandwidth}
For every $\gamma >0$, $\Delta \geq 2$, and $k\geq 1$, there exist
$\beta>0$ and $n_0 \geq 1$ such that for every $n\geq n_0$ the following
holds. If $G$ is a graph on $n$ vertices with minimum degree $\delta(G)
\geq \left(\frac{k-1}{k}+\gamma\right)n$ and if $H$ is a $k$-colourable
graph on $n$ vertices with maximum degree $\Delta(H) \leq \Delta$ and
bandwidth at most $\beta n$, then $G$ contains a copy of $H$. \qed
\end{theorem}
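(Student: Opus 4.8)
The plan is the by-now-classical combination of Szemerédi's Regularity Lemma, the Hajnal--Szemerédi theorem, and the Blow-up Lemma, with some careful bookkeeping to match~$H$ to the regular partition of~$G$. First I would apply the degree form of the Regularity Lemma to~$G$ with parameters $\beta \ll \eps \ll d \ll \gamma$, obtaining an $\eps$-regular partition into an exceptional set~$V_0$ with $|V_0|\le\eps n$ and clusters $V_1,\dots,V_r$ of common size $m=(1\pm\eps)n/r$, together with the reduced graph~$R$ on~$[r]$ in which $ij\in E(R)$ precisely when $(V_i,V_j)$ is $\eps$-regular of density at least~$d$. A standard averaging argument gives $\delta(R)\ge\big(\tfrac{k-1}{k}+\tfrac{\gamma}{2}\big)r$, so $R$ is highly connected and, by the Hajnal--Szemerédi theorem, contains an almost-spanning $K_k$-factor. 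Using the connectivity I would upgrade this to a \emph{backbone}: a sequence of clusters, cyclically arranged and covering essentially all of~$V_1,\dots,V_r$, in which every window of $k$ consecutive clusters spans a $K_k$ of~$R$ --- morally the $(k-1)$-st power of a Hamilton cycle on the clusters, with a little extra care where $k\nmid r$. Moving $O(\eps n)$ vertices between clusters and distributing~$V_0$, I would then make every pair of the backbone \emph{super-regular}, at the cost of only a tiny change in the cluster sizes.

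The crux is the \emph{``Lemma for~$H$''}: I must show that~$H$ embeds into the blow-up determined by the backbone, with the blow-up parts having exactly the sizes of the (adjusted) clusters. The input is a proper $k$-colouring of~$H$ together with a vertex labelling of bandwidth at most $\beta n$. I would cut $[n]$ into $r$ consecutive intervals $I_1,\dots,I_r$ of length roughly $n/r$ --- hence much longer than $\beta n$ --- so that edges of~$H$ run only inside an interval or between consecutive ones. The colour classes of~$H$ inside~$I_i$ are to be sent to the $k$ clusters of the $i$-th clique of the backbone; inside an interval this is consistent because that $K_k$ is a clique, and to make it consistent across a boundary I would recolour~$H$ in a window of width $O(\beta n)$ around each cut so that the colouring there ``rotates'' through colours in the pattern forced by the $k-1$ clusters shared by consecutive windows of the backbone --- the slack between $\beta n$ and $n/r$ being exactly what creates room for this recolouring. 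On top of this I must \emph{balance} the loads: the number of vertices of~$H$ sent to a cluster must equal that cluster's size on the nose. I would arrange this by sliding the interval endpoints and reassigning $O(\beta n)$ vertices near the cuts at a time, using that small shifts change the cluster loads in a controlled, essentially continuous way.

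Finally, with~$G$ cleaned up so that the backbone is a spanning super-regular structure and~$H$ realised as a spanning subgraph of the corresponding blow-up with matching part sizes, a single application of the Blow-up Lemma --- valid since $r$ and~$\Delta$ are bounded and every relevant pair is super-regular --- produces the desired copy of~$H$ in~$G$. I expect the real obstacle to be the Lemma for~$H$: one must simultaneously route the $k$ colour classes of every interval into the correct clusters, recolour across every boundary to match the sparse overlap pattern of the backbone, and hit the cluster sizes exactly, all within windows of width only $O(\beta n)$; reconciling these three requirements --- and organising the hierarchy $\beta\ll\eps\ll d\ll\gamma$ so that all regular and super-regular pairs survive the vertex shuffling --- is where the real work lies, while the Blow-up Lemma step is then essentially a black box.
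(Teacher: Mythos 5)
This paper does not prove Theorem~\ref{thm:bandwidth} at all: it is quoted from~\cite{bottcher2009proof} as an imported result, so your sketch can only be compared with the proof in that reference. Your overall architecture does match it --- regularity lemma, a spanning backbone structure $B^k_r$ in the reduced graph that is super-regular on the disjoint cliques $K^k_r$, a ``Lemma for~$H$'' assigning the vertices of $H$ to clusters using the bandwidth ordering and a recolouring near the cuts, a balancing step, and one application of the blow-up lemma (this is exactly the skeleton the present paper reuses, cf.\ Lemma~\ref{lem:G} and Lemma~\ref{lem:H2}).

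However, one step you wave through is a genuine gap, and it is not the one you flag as the crux. You claim that from an (almost) spanning $K_k$-factor of the reduced graph $R$, ``using the connectivity'' one can cyclically order the cliques so that every pair of consecutive cliques spans the required backbone structure (all cross pairs except the same-colour ones regular and dense). Connectivity, and even the full minimum degree $\delta(R)\ge\big(\tfrac{k-1}{k}+\tfrac{\gamma}{2}\big)|V(R)|$, is far from sufficient for this: two cliques of the factor need not be joined compatibly at all, and a counting argument only bounds the number of non-edges incident to a fixed clique by roughly $k\big(\tfrac1k-\tfrac\gamma2\big)|V(R)|$, which permits a fixed clique to be incompatible with a constant fraction (even more than all) of the other cliques. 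So there is no cheap ``upgrade'' from the Hajnal--Szemer\'edi factor to the backbone; constructing $B^k_r\subseteq R^k_r$ together with super-regularity on $K^k_r$ and the absorption of the exceptional set is precisely the content of the Lemma for~$G$ in~\cite{bottcher2009proof}, and it needs its own careful argument (and a choice of which clusters play which colour role), not just high connectivity. Relatedly, the exact balancing is carried out there on the $G$-side --- moving a small number of vertices between clusters while preserving (super-)regularity, as in Lemma~\ref{lem:balancing} here --- rather than by sliding the interval endpoints of $H$ until the loads match ``on the nose''; the colour-class counts of $H$ inside a window are not freely adjustable, so the $H$-side only achieves the sizes approximately and the partition of $G$ is corrected to match. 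With those two points repaired, your outline is the standard and correct route.
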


More recently, the transference of extremal results from dense graphs to sparse graphs became a research focus.
Again, a prime example, due to Lee and Sudakov~\cite{lee2012}, is that if
$\Gamma=G(n,p)$ is a typical binomial random graph with $p\ge C\tfrac{\log
  n}{n}$, for some large $C$, then any $G\subset\Gamma$ with minimum degree
$\big(\tfrac12+o(1)\big)pn$ is Hamiltonian. This is a transference of
Dirac's theorem to sparse random graphs. 
Further such results exist, all focused on finding small-bandwidth
subgraphs (for a comprehensive list see, e.g., the recent survey~\cite{BCCsurv}). 
One can also ask similar questions in other sparse graphs than random
graphs---for example for sufficiently pseudorandom graphs---but we will not
focus on this question here.

As for the classical extremal statements, it is desirable to have a result
covering a very general class of spanning subgraphs. This is achieved
in~\cite{ABET}, where the following transference of the
Bandwidth Theorem to sparse random graphs is proved.

\begin{theorem}[{Sparse Bandwidth Theorem~\cite[Theorem~6]{ABET}}]
\label{thm:abet}
For each $\gamma >0$, $\Delta \geq 2$, and $k \geq 1$, there exist
constants $\beta^\ast >0$ and $\Ca >0$ such that the following holds
asymptotically almost surely for $\Gamma = G(n,p)$ if
$p \geq \Ca\big(\frac{\log n}{n}\big)^{1/\Delta}$. Let $G$ be a spanning
subgraph of $\Gamma$ with
$\delta(G) \geq\left(\frac{k-1}{k}+ \gamma\right)pn$, and let $H$ be a
$k$-colourable graph on $n$ vertices with $\Delta(H) \leq \Delta$,
bandwidth at most $\beta^\ast n$, and with at least $\Ca p^{-2}$ vertices
which are not contained in any triangles of $H$. Then $G$ contains a copy
of $H$. \qed
\end{theorem}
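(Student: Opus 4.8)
The plan is to run the proof of the dense Bandwidth Theorem (Theorem~\ref{thm:bandwidth}) inside the sparse regularity method, replacing the classical blow-up lemma by the sparse blow-up lemma of Allen--B\"ottcher--H\`an--Kohayakawa--Person. The skeleton is: (i) regularise $G$ and extract a reduced graph $R$ of almost the same relative minimum degree; (ii) find in $R$ a spanning ``backbone'' of cliques along which $H$ will be laid out; (iii) distribute the vertices of $H$ onto the backbone using its bounded bandwidth and $k$-colourability; (iv) correct the resulting cluster-size imbalances; (v) embed $H$ with the sparse blow-up lemma. The sparse setting is what forces both the hypothesis $p\ge\Ca(\log n/n)^{1/\Delta}$ and the restriction that $\Ca p^{-2}$ vertices of $H$ avoid triangles.

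For step~(i) I would fix $\epsilon\ll d\ll\gamma$ and apply a sparse regularity lemma to $G$; since a.a.s.\ $\Gamma=G(n,p)$ has no atypically dense or sparse vertex neighbourhoods, the minimum degree $\delta(G)\ge(\tfrac{k-1}{k}+\gamma)pn$ passes to $\delta(R)\ge(\tfrac{k-1}{k}+\tfrac{\gamma}{2})t$ for the reduced graph $R$ on its $t$ clusters, whose edges are $(\epsilon,d,p)$-regular pairs. For step~(ii) I would apply a structural lemma for reduced graphs as in~\cite{bottcher2009proof}: at this minimum degree $R$ contains vertex-disjoint $k$-cliques $X^1,\dots,X^m$ covering all but $O(1)$ clusters, cyclically ordered so that the embedding can flow from $X^i$ to $X^{i+1}$, together with a little slack for later adjustments; the exceptional vertices from regularity and the leftover clusters get split up and absorbed into the backbone, after which one passes to equal-sized subclusters.

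Steps~(iii) and~(iv) are the ``Lemma for $H$'' of~\cite{bottcher2009proof} transplanted to this setting. Using $\bw(H)\le\beta^\ast n$ and a $k$-colouring of $H$, I would cut $[n]$ into short consecutive intervals and assign them to the cliques $X^1,\dots,X^m$ respecting both the cyclic order (consecutive intervals to equal or adjacent cliques) and a proper colouring within each clique, so that each cluster is assigned as many vertices of $H$ as its size, up to an error that is $o(n)$ in total. Correcting these errors means rerouting short segments of $H$ around the backbone, and here the $\ge\Ca p^{-2}$ triangle-free vertices of $H$ are spent: at $p\asymp(\log n/n)^{1/\Delta}$ the transitions between consecutive cliques, and the fine size-balancing, must be carried out by parts of $H$ whose embedding needs no common clique-neighbourhoods in $G$ — that is, by vertices lying in no triangle — and a budget of order $p^{-2}$ of them is what the bookkeeping near the threshold requires.

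Step~(v), the embedding, is the main obstacle and the crux of the sparse statement. With $G$ regularised, $H$ distributed and sizes balanced, I would set up for each vertex of $H$ a candidate set inside its target cluster and invoke the sparse blow-up lemma; its hypotheses are $(\epsilon,d,p)$-regularity of the pairs used together with \emph{regularity-inheritance} — one- and two-sided, on suitably nested neighbourhoods — so that a candidate set loses only a constant-times-$p$ factor each time a neighbour is embedded and stays $\gg 1$ after all $\le\Delta$ neighbours are in place. The work is to prove that a.a.s.\ $\Gamma=G(n,p)$ enjoys all these inheritance properties, where a union bound over the $n$ vertices and their nested neighbourhoods forces $p^{\Delta}n\gg\log n$, i.e.\ $p\ge\Ca(\log n/n)^{1/\Delta}$; and, hand in hand with this, to check that once the unavoidable ``thin spots'' of an adversarially chosen $G\subset\Gamma$ are routed around using the triangle-free vertices of $H$, the hypotheses of the sparse blow-up lemma are met and the lemma delivers the desired copy of $H$ in~$G$.
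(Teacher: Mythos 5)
Your skeleton (sparse regularity, a backbone of $k$-cliques in the reduced graph, a Lemma-for-$H$ style assignment, balancing, then the sparse blow-up lemma with one- and two-sided inheritance) is indeed the strategy of~\cite{ABET} and of this paper. But there is a genuine gap: you have mislocated the role of the $\Ca p^{-2}$ triangle-free vertices of $H$, and as a consequence your outline omits the one step that makes the sparse statement different from the dense one. The transitions between consecutive cliques and the fine size-balancing do \emph{not} consume triangle-free vertices; exactly as in the dense proof they are handled by the bandwidth ordering together with the backbone graph $B^k_r$ and a zero-free $(k+1)$-colouring (Lemma~\ref{lem:H2}), and by the Balancing Lemma (Lemma~\ref{lem:balancing}). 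What the triangle-free vertices are for is covering the \emph{exceptional vertices of $G$}: after regularising, there remains a set $V_0$ of up to $\Ca p^{-2}$ vertices of $G$ which are badly behaved with respect to the partition (Lemma~\ref{lem:G}), and this cannot be avoided, since an adversary may delete a $o(1)$-fraction of the edges at each vertex and still turn the neighbourhoods of $\Omega(p^{-2})$ vertices into independent sets, and since two-sided regularity inheritance (Lemma~\ref{lem:TSRIL}) genuinely fails for up to order $p^{-2}$ vertices. Because the embedding is spanning, these vertices cannot be ``routed around'' as your last sentence suggests --- every vertex of $G$ must receive a vertex of $H$ --- and the only $H$-vertices that can safely sit on them are ones whose $H$-neighbourhood is an independent set, i.e.\ vertices in no triangle. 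This is precisely where the hypothesis ``at least $\Ca p^{-2}$ vertices not in triangles'' is spent.

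Concretely, your plan is missing the pre-embedding stage (the `Common Neighbourhood Lemma' of~\cite{ABET}, replaced here by Lemma~\ref{lem:coverv}): one greedily embeds, onto each $v\in V_0$, a triangle-free vertex $x$ of $H$ together with a small ball around $x$, in such a way that the unembedded neighbours of the embedded ball acquire image restrictions satisfying Definition~\ref{def:restrict} (sufficiently large candidate sets inside clusters, with the required $\Gamma$-neighbourhood sizes and inherited regularity). Only after this pre-embedding, an adjustment of the $H$-partition to respect it, and a rebalancing of the clusters, can the sparse blow-up lemma (Lemma~\ref{thm:blowup}) be applied --- and it must be applied with the restriction-pair input, not merely with candidate sets in clusters. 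Without this stage your step (v) cannot be completed: super-regularity and inheritance on the backbone can simply fail to hold at the vertices of $V_0$, no choice of $\epsilon,d$ fixes this, and the blow-up lemma has nothing to say about them. So the proof as proposed would not go through; the fix is to insert the pre-embedding step, which is also the step that explains why the budget of triangle-free vertices must have size of order $p^{-2}$ rather than, say, $o(n)$.
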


Note however that this result is not quite what one would expect as a
transference of the Bandwidth Theorem. There is an additional restriction
that some vertices of $H$ may not be in triangles. This restriction is
necessary, since in a sparse random graph an adversary who creates $G$ from
$\Gamma$ can typically remove only a tiny fraction of the edges at each
vertex and still make the neighbourhoods of $\Omega(p^{-2})$ vertices into
independent sets. This prompts the question how we should restrict the
adversary so that any~$H$ with small maximum degree and sublinear bandwidth
is contained in~$G$? The following theorem, which is the main result of
this paper, answers this question.

\begin{theorem}[Main result]
\label{thm:main}
For each $\gamma >0$, $\Delta \geq 2$, $k \geq 2$ and $0\le s\le k-1$,
there exist constants $\beta^\ast >0$ and $\Ca >0$ such that the
following holds asymptotically almost surely for $\Gamma = G(n,p)$ if 
$p \geq \Ca\big(\frac{\log n}{n}\big)^{1/\Delta}$. Let $G$ be a spanning subgraph of $\Gamma$ with $\delta(G) \geq\left(\frac{k-1}{k}+ \gamma\right)pn$, such that for each $v\in V(G)$ there are at least $\gamma p^{\binom{s}{2}}(pn)^s$ copies of $K_s$ in $N_G(v)$. Let $H$ be a graph on $n$ vertices with $\Delta(H) \leq \Delta$, bandwidth at most $\beta^\ast n$ and suppose that there is a proper $k$-colouring of $V(H)$ and at least $\Ca p^{-2}$ vertices in $V(H)$ whose neighbourhood contains only $s$ colours. Then $G$ contains a copy of $H$.
\end{theorem}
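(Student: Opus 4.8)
The plan is to run the proof of Theorem~\ref{thm:abet} from \cite{ABET} essentially unchanged and to replace only the ingredient that forces the condition ``$H$ has $\Ca p^{-2}$ vertices in no triangle''. Recall the skeleton of that proof: (i) apply a sparse regularity lemma to $\Gamma$, pass to a sub-partition that $G$ inherits, and clean it to obtain clusters $V_1,\dots,V_r$ with reduced graph $R$, where $\delta(R)\ge(\tfrac{k-1}k+\tfrac\gamma2)r$, every edge of $R$ is $(\eps,d,p)$-super-regular in $G$, and only $\bigO(p^{-2})$ vertices of $G$ remain genuinely exceptional; (ii) use $\delta(R)$ together with the layout lemma underlying Theorem~\ref{thm:bandwidth} (from \cite{bottcher2009proof}) to find a spanning ``backbone'' in $R$ along which any $k$-colourable graph of bounded degree and sublinear bandwidth can be distributed; (iii) cut $H$ along its labelling, assign segments and colour classes to the backbone, and select buffer vertices; (iv) invoke the sparse blow-up lemma used in \cite{ABET}, which is a.a.s.\ applicable for $p\ge\Ca(\tfrac{\log n}n)^{1/\Delta}$, to embed $H$, the exceptional vertices of $G$ being absorbed as image-restricted vertices embedded into common neighbourhoods. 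The triangle hypothesis is used at exactly one point: an exceptional vertex $v\in V(G)$ may have an independent $G$-neighbourhood --- an adversary can arrange this at $\Omega(p^{-2})$ vertices while keeping $\delta(G)\ge(\tfrac{k-1}k+\gamma)pn$ --- and then the $H$-vertex mapped to $v$ must lie in no triangle. No strengthening of the regularity, regularity-inheritance, or blow-up machinery of \cite{ABET} will be needed.

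Our only extra input is that every $v\in V(G)$, hence in particular every exceptional vertex, has at least $\gamma p^{\binom s2}(pn)^s$ copies of $K_s$ in $N_G(v)$ --- a constant fraction of the number expected in $G(pn,p)$. For $s\ge2$ this already forbids $N_G(v)$ from being independent, and the point of the new hypothesis on $H$ is that a vertex $w$ with at most $s$ colours on $N_H(w)$ has $N_H(w)$ properly $s$-coloured, hence $K_{s+1}$-free, hence ``absorbable'' by a $v$ whose neighbourhood is $K_s$-rich. Concretely, the key new lemma is that such a $v$ lies in a super-regular blown-up $K_{s+1}$ adapted to the partition: there are $s$ clusters $V_{j_1},\dots,V_{j_s}$ into each of which $v$ has density at least $d$ and such that $\bigl(N_G(v)\cap V_{j_1},\dots,N_G(v)\cap V_{j_s}\bigr)$ is $(\eps,d,p)$-super-regular in $G$. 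This follows by pigeonholing the $\gamma p^{\binom s2}(pn)^s$ copies of $K_s$ of $N_G(v)$ over the $\bigO(1)$ clusters --- those meeting the exceptional set being negligible in number --- and then applying the usual slicing that upgrades a regular $s$-tuple of high $K_s$-density to a super-regular one. Since $s\le k-1$, the clique $\{v\}\cup\{V_{j_1},\dots,V_{j_s}\}$ fits inside a $K_k$ of the backbone. Granting this, the flexible vertices of $H$ --- those $w$ with at most $s$ colours on $N_H(w)$, of which there are at least $\Ca p^{-2}$ --- are precisely what the exceptional $G$-vertices can host: $w$ together with any clique of $N_H(w)$ is a $K_{\le s+1}$, which the blow-up lemma embeds into $\{v\}\cup\bigcup_i\bigl(N_G(v)\cap V_{j_i}\bigr)$ once the $i$-th neighbourhood colour of $w$ is routed to $V_{j_i}$.

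What remains, and where I expect the genuine difficulty to lie, is to interleave steps (i)--(iii) with the handling of the exceptional $G$-vertices so that the local $K_{s+1}$-structure demanded by each exceptional $v$ is compatible with the global backbone --- that is, so that the $s$ clusters $V_{j_1},\dots,V_{j_s}$ supplied by the key lemma for $v$ can be taken to lie in the backbone clique that covers $v$'s cluster at the position where $v$'s matched flexible vertex of $H$ is embedded. The freedom available is that there are only $\bigO(p^{-2})$ exceptional vertices, that $R$ has large minimum degree (so the backbone and the choice of co-clusters are flexible), and that the bounded bandwidth of $H$ lets us place the $\bigO(p^{-2})$ flexible $H$-vertices in distinct, widely separated segments, so their $H$-neighbourhoods are disjoint and each can be privately matched to one exceptional $G$-vertex. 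The delicate case is several exceptional vertices sharing a cluster, each wanting a different co-clique; handling it should come down to the standard ``there is slack everywhere'' bookkeeping (a tiny exceptional set, wide separation, robust structure in $R$) rather than to a black-box citation, after which step~(iv) goes through as in \cite{ABET}. The cases $s\in\{0,1\}$ are degenerate --- isolated vertices of $H$, respectively vertices of $H$ with monochromatic neighbourhood --- and essentially reduce to (a variant of) Theorem~\ref{thm:abet}.
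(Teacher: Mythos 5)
There is a genuine gap, and it sits exactly where your ``key new lemma'' does the work. You claim that pigeonholing the $\gamma p^{\binom{s}{2}}(pn)^s$ copies of $K_s$ in $N_G(v)$ over the $\bigO(1)$ clusters, followed by ``the usual slicing'', yields $s$ clusters $V_{j_1},\dots,V_{j_s}$ of the \emph{main} partition such that the sets $N_G(v)\cap V_{j_i}$ are pairwise $(\eps,d,p)$-super-regular. This does not follow. First, a large $K_s$-count across an $s$-tuple of sets is strictly weaker than regularity (already for $s=2$: many $G$-edges between $N_G(v)\cap V_{j_1}$ and $N_G(v)\cap V_{j_2}$ give density, not regularity, and the adversary chooses which edges at $v$ survive). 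Second, there is nothing to ``slice'': the sets $N_G(v)\cap V_{j_i}$ have size only about $p|V_{j_i}|$, far below the $\eps|V_{j_i}|$ threshold of Proposition~\ref{prop:subpairs}, so one would need regularity inheritance at $v$ --- but the exceptional vertices $v\in V_0$ are \emph{precisely} those at which the inheritance properties~\ref{lemG:inheritance} of Lemma~\ref{lem:G} may fail, and the $K_s$-richness hypothesis does not restore them. The paper's route around this is to run the sparse regularity lemma afresh \emph{inside} $N_G(v)$ (Lemma~\ref{lem:SRLb}, used once to build the robust witnesses $Q_{v,i}$ and again inside Lemma~\ref{lem:coverv} on a random slice $S$), which produces dense regular pairs among \emph{fine} clusters that need not align with the main partition at all; the resulting mismatch then has to be repaired globally, by letting the pre-embedding select an arbitrary clique $\{q_1,\dots,q_k\}$ of $R$ and re-routing the assignment of nearby $H$-vertices to the backbone via the sequence of cliques $Z_1,\dots,Z_{\binom{k+1}{2}}$ and the modified homomorphism $f^*$. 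This is a dedicated argument, not the ``slack everywhere'' bookkeeping you defer to.

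The second, related gap is your plan to let the blow-up lemma itself embed the flexible vertex $w$'s neighbours into $\{v\}\cup\bigcup_i\bigl(N_G(v)\cap V_{j_i}\bigr)$ as image-restricted vertices. For $s\ge 2$ the restriction-pair conditions \ref{itm:restrict:sizeGa} and \ref{itm:restrict:Ireg} for adjacent restricted vertices amount to one- and two-sided regularity inheritance at $v$, which, as above, is exactly what can fail at $v\in V_0$; and the second neighbourhood of $w$ (which may use all $k$ colours) must interact with these tiny restricted sets as well. The paper instead pre-embeds, by a sequential regularity embedding inside the fine partition of $N_{G'}(v)$, the \emph{entire ball of radius $s+1$} around $w$ (with the recolouring $\rho'$ and the order $<_{\rho'}$ ensuring each vertex forced into the neighbourhood of $v$ has at most $\Delta-3$ previously embedded neighbours, so candidate sets survive the extra powers of $p$), and only the distance-$(s+1)$ boundary $T$ is handed to the blow-up lemma with image restrictions in full clusters --- restrictions whose required size in Definition~\ref{def:restrict} is recovered from the random choice of $S$, since the sets produced inside $S$ are themselves too small. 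None of this machinery is replaceable by the single-vertex absorption you describe, so as written the proposal does not prove the theorem.
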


To help understand this statement, observe that the extra condition we put on $G$ is that each vertex neighbourhood contains a constant (but perhaps rather small) fraction of the copies of $K_s$ which it has in $\Gamma$. The additional restriction on $H$ which is not present in the Bandwidth Theorem specialises to requiring $\Omega(p^{-2})$ vertices not in triangles if $s=1$ (re-proving Theorem~\ref{thm:abet}) and becomes trivially satisfied when $s=k-1$, since no vertex in a proper $k$-colouring can have neighbours of $k$ or more different colours. In particular, imposing the additional requirement on $G$ that every vertex neighbourhood contains many copies of $K_{k-1}$ allows us to find copies of all the graphs handled by the Bandwidth Theorem.

The reader might have expected, by analogy with Theorem~\ref{thm:abet}, to see a different condition on $H$, namely that there should exist $\Omega(p^{-2})$ vertices whose neighbourhood is $s$-colourable (ignoring the overall colouring of $H$). However this condition is not sufficient: we will give in Section~\ref{sec:construct} an example of a graph $H$ which satisfies this condition (for $s=2$) but which need not be a subgraph of $G$ satisfying the conditions of Theorem~\ref{thm:main}.

We should comment on the relation between this result and the recent work of Fischer, \v{S}kori\'c, Steger and Truji\'c~\cite{FSST}, who show `triangle-resilience' for the square of a Hamilton cycle. Triangle-resilience is a stronger condition to impose on $G$ than our Theorem~\ref{thm:main} would require for proving the existence of the square of a Hamilton cycle, so in this sense our result is stronger. However we can only work with $p\gg\big(\tfrac{\log n}{n}\big)^{1/4}$, whereas in~\cite{FSST} $p$ may be as small as $Cn^{-1/2}\log^3n$. This is rather close to the lower bound $p=n^{-1/2}$ at which point even a typical $G(n,p)$ does not contain the square of a Hamilton cycle, so in this sense the result of~\cite{FSST} is much stronger. It would be very interesting to improve the probability bounds in our result. But the method of~\cite{FSST} uses the structure of the square of a Hamilton cycle in an essential way (in particular that it has constant bandwidth), and it is not clear how one might use their ideas in our more general situation.

\subsection{Outline of the paper}
We prove Theorem~\ref{thm:main} by making use of the sparse regularity
lemma of Kohayakawa and R\"odl~\cite{kohayakawa1997,kohayakawa2003}, the
sparse blow-up lemma of~\cite{blowup}, and several lemmas
from~\cite{ABET}. In Section~\ref{sec:preliminaries} we give the
definitions and results necessary to state and use the sparse regularity
lemma and the sparse blow-up lemma, and also a few probabilistic
lemmas. In Section~\ref{sec:mainlemmas} we give a somewhat more
general statement (Theorem~\ref{thm:maink}) than Theorem~\ref{thm:main},
which allows for graphs $H$ which are not quite $k$-colourable, and outline
briefly how to prove it using various lemmas. 

The basic proof strategy, and most
of the lemmas, are taken from~\cite{ABET}. The main exception is the
pre-embedding lemma,
Lemma~\ref{lem:coverv}, which replaces the `Common Neighbourhood Lemma'
of~\cite{ABET}. The proof of this lemma, which is provided in Section~\ref{sec:pel},
 requires new ideas and is the main work of this paper. The setup that this
 pre-embedding lemma creates also entails a number of modifications to the
 proof from~\cite{ABET}, which need some work. The details are given in
 Section~\ref{sec:mainproof}, where we give the proof of the main technical theorem,
Theorem~\ref{thm:maink}.

Finally, we finish with some  concluding remarks in Section~\ref{sec:remarks}.

\section{Preliminaries}
\label{sec:preliminaries}

Throughout the paper $\log$ denotes the natural logarithm.
We assume that the order $n$ of all graphs tends to infinity and therefore is sufficiently large whenever necessary.
Our graph-theoretic notation is standard.
In particular, given a graph $G$ its vertex set
is denoted by $V(G)$ and its edge set by $E(G)$. Let $A,B\subseteq V$ be disjoint vertex sets. We denote the number of edges between $A$ and $B$ by $e(A,B)$.
For a vertex $v \in V(G)$ we write $N_G(v)$ for the neighbourhood of $v$ in $G$ and $N_G(v,A):= N_G(v) \cap A$ for the neighbourhood of $v$ restricted to $A$.
Finally, let $\deg_G(v) := |N_G(v)|$ be the degree of $v$ in $G$.
For the sake of readability, we do not make any effort to optimise the constants in our theorems and proofs.

\subsection{The sparse regularity method}

Now we introduce some definitions and results of the regularity method as well as related tools that are essential in our proofs. In particular, we state a minimum degree version of the sparse regularity lemma (Lemma~\ref{lem:regularitylemma}) and the sparse blow-up lemma (Lemma~\ref{thm:blowup}). Both lemmas use the concept of regular pairs. Let $G= (V,E)$ be a graph, $\eps, d >0$, and $p \in (0,1]$. Moreover, let $X,Y \subseteq V$ be two disjoint nonempty sets. The \emph{$p$-density} of the pair $(X,Y)$ is defined as \[d_{G,p}(X,Y) := \frac{e_G(X,Y)}{p|X||Y|}.\] 
We now define regular, and super-regular, pairs. Note that what we are calling `regular' is sometimes referred to as `lower-regular' by contrast with `fully-regular' (sometimes just called `regular') pairs in which an upper bound on $p$-densities is also imposed. It is immediate from the definition of the latter that a fully-regular pair is also lower-regular, with the same parameters; the converse is false.

\begin{definition}[regular pairs, fully-regular pairs, super-regular pairs]
\label{def:regular}
The pair $(X,Y)$ is called \emph{$(\eps,d,p)_G$-regular} if for every $X'\subseteq X$ and $Y'\subseteq Y$ with $|X'|\geq \eps|X|$ and $|Y'|\geq \eps |Y|$ we have  $d_{G,p}(X',Y') \geq d- \eps$. It is called \emph{$(\eps,d,p)_G$-regular} if there is some $d'\ge d$ such that for every $X'\subseteq X$ and $Y'\subseteq Y$ with $|X'|\geq \eps|X|$ and $|Y'|\geq \eps |Y|$ we have  $\big|d_{G,p}(X',Y')-d'\big| \le \eps$.

If $(X,Y)$ is $(\eps,d,p)_G$-regular, and in addition we have
\begin{align*}
 |N_G(x,Y)| &\geq (d-\eps)\max\big(p|Y|,\deg_\Gamma(x,Y)/2\big)\quad\text{and}\\
 |N_G(y,X)| &\geq (d-\eps)\max\big(p|X|,\deg_\Gamma(y,X)/2\big)
\end{align*}
for every $x \in X$ and $y \in Y$, then the pair $(X,Y)$ is called \emph{$(\eps,d,p)_G$-super-regular}.
\end{definition}

A direct consequence of the definition of $(\eps,d,p)$-regular pairs is the following proposition about the sizes of neighbourhoods in regular pairs. 

\begin{proposition}
\label{prop:neighbourhood}
Let $(X,Y)$ be $(\eps, d,p)$-regular. Then there are less than $\eps |X|$ vertices $x\in X$ with $|N(x,Y)| < (d-\eps)p|Y|$. \qed
\end{proposition}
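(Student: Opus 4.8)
The plan is a standard counting argument by contradiction. Suppose, for contradiction, that the set
\[
X' := \{\, x \in X : |N(x,Y)| < (d-\eps)p|Y| \,\}
\]
has size $|X'| \ge \eps|X|$. Take $Y' := Y$, so that trivially $|Y'| \ge \eps|Y|$ (recall $Y$ is nonempty). Then $X'$ and $Y'$ meet the size requirements in Definition~\ref{def:regular}, so the $(\eps,d,p)$-regularity of $(X,Y)$ yields $d_{G,p}(X',Y') \ge d-\eps$, i.e.
\[
e_G(X',Y) \;=\; e_G(X',Y') \;\ge\; (d-\eps)\,p\,|X'|\,|Y|.
\]

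On the other hand, counting the edges between $X'$ and $Y$ through their endpoints in $X'$ gives
\[
e_G(X',Y) \;=\; \sum_{x \in X'} |N(x,Y)| \;<\; |X'|\,(d-\eps)p|Y|,
\]
since every $x \in X'$ satisfies $|N(x,Y)| < (d-\eps)p|Y|$ by definition of $X'$. The two displayed inequalities are in direct contradiction, so in fact $|X'| < \eps|X|$, which is exactly the claim.

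I do not expect any genuine obstacle here: the only points to be slightly careful about are that the $p$-density is defined with nonempty parts (so $Y' = Y$ is admissible), that the inequality defining $X'$ is strict (which is what makes the final contradiction strict rather than borderline), and that we invoke regularity with $|X'| \ge \eps|X|$ rather than $>$, matching the hypothesis of Definition~\ref{def:regular}. The argument does not use the super-regularity conditions at all, only the lower-regularity part of the definition.
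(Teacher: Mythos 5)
Your proof is correct and is exactly the standard argument the paper has in mind: the paper states Proposition~\ref{prop:neighbourhood} as a direct consequence of Definition~\ref{def:regular} with no written proof, and the intended justification is precisely your step of applying lower-regularity to the set $X'$ of low-degree vertices together with $Y'=Y$ and comparing with the edge count $e_G(X',Y)=\sum_{x\in X'}|N(x,Y)|$. Your attention to the strictness of the inequality and to using $|X'|\ge\eps|X|$ (not $>$) matches the definition as stated, so there is nothing to add.
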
 

The following proposition is another immediate consequence of Definition~\ref{def:regular}. It states that an $(\eps,d,p)$-regular pair is still regular if only a linear fraction of its vertices is removed.

\begin{proposition}
\label{prop:subpairs}
Let $(X,Y)$ be $(\eps, d,p)$-regular and suppose $X'\subseteq X$ and $Y' \subseteq Y$ satisfy $|X'| \geq \mu |X|$ and $|Y'|\geq \nu |Y|$ with some $\mu, \nu >0$. Then $(X',Y')$ is $(\frac{\eps}{\min\{\mu,\nu\}},d,p)$-regular. \qed
\end{proposition}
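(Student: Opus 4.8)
The plan is to unpack Definition~\ref{def:regular} directly: to show $(X',Y')$ is $(\eps',d,p)$-regular with $\eps' = \eps/\min\{\mu,\nu\}$, I would take arbitrary subsets $X'' \subseteq X'$ and $Y'' \subseteq Y'$ satisfying $|X''| \geq \eps'|X'|$ and $|Y''| \geq \eps'|Y'|$, and verify that these same sets are large enough to witness regularity of the original pair $(X,Y)$.

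Concretely, first I would observe that $X'' \subseteq X' \subseteq X$ and $Y'' \subseteq Y' \subseteq Y$, so the $p$-density $d_{G,p}(X'',Y'')$ makes sense for the pair $(X,Y)$. Next, I would chain the size bounds: since $\min\{\mu,\nu\} \leq \mu$ we have $\eps' = \eps/\min\{\mu,\nu\} \geq \eps/\mu$, hence
\[
|X''| \;\geq\; \eps'|X'| \;\geq\; \frac{\eps}{\mu}\cdot\mu|X| \;=\; \eps|X|,
\]
and symmetrically $|Y''| \geq (\eps/\nu)\cdot\nu|Y| = \eps|Y|$. Thus $X''$ and $Y''$ meet the thresholds in the definition of $(\eps,d,p)$-regularity of $(X,Y)$, so $d_{G,p}(X'',Y'') \geq d-\eps$. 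Since $X''$ and $Y''$ were arbitrary subsets of $X'$, $Y'$ of the required relative sizes, this is exactly the statement that $(X',Y')$ is $(\eps',d,p)$-regular.

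There is no real obstacle here; the only thing to be careful about is getting the direction of the inequality $\eps/\min\{\mu,\nu\} \geq \max\{\eps/\mu,\eps/\nu\}$ right, so that the relative-size conditions on $X''$ inside $X'$ genuinely upgrade to the absolute-size conditions on $X''$ inside $X$ (and likewise for $Y$). One should also note the harmless degenerate case: if $\mu \geq 1$ or $\nu \geq 1$ the claim is only interesting when $\eps' \leq 1$, and in any event the argument above is unaffected since we never use $\eps' < 1$.
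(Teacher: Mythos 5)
Your argument is correct and is exactly the routine verification the paper has in mind: the paper states this proposition without proof as an immediate consequence of Definition~\ref{def:regular}, and your unpacking (upgrading the relative-size conditions inside $X'$, $Y'$ to the absolute thresholds $\eps|X|$, $\eps|Y|$ via $\eps/\min\{\mu,\nu\}\ge\eps/\mu,\eps/\nu$) is the intended reasoning.
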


In order to state the sparse regularity lemma, we need some more definitions. A partition $\cV = \{V_i\}_{i\in\{0,\ldots,r\}}$ of the vertex set of $G$ is called an \emph{$(\eps,p)_G$-regular partition} of $V(G)$ if $|V_0|\leq \eps |V(G)|$ and $(V_i,V_{i'})$ forms an $(\eps,0,p)_G$-fully-regular pair for all but at most $\eps\binom{r}{2}$ pairs $\{i,i'\}\in \binom{[r]}{2}$. It is called an \emph{equipartition} if $|V_i| = |V_{i'}|$ for every $i,i'\in[r]$.
The partition $\cV$ (or the pair $(G,\cV)$) is called
\emph{$(\eps,d,p)_G$-regular} on a graph $R$ with vertex set $[r]$
if $(V_i, V_{i'})$ is $(\eps,d,p)_G$-regular for every $\{i,i'\}
\in E(R)$. The graph $R$ is referred to as the \emph{$(\eps,d,p)_G$-reduced
  graph} of $\cV$, the partition classes $V_i$ with $i \in [r]$ as
\emph{clusters}, and $V_0$ as the \emph{exceptional set}. We also say that
$\cV$ (or the pair $(G,\cV)$) is \emph{$(\eps,d,p)_G$-super-regular} on a graph $R'$ with vertex set $[r]$ if $(V_i, V_{i'})$ is $(\eps,d,p)_G$-super-regular for every $\{i,i'\}\in E(R')$. 

Analogously to Szemeredi's regularity lemma for dense graphs, the sparse
regularity lemma, proved by Kohayakawa, Rödl, and
Scott~\cite{kohayakawa1997, kohayakawa2003, scott2011}, asserts the existence of an $(\eps,p)$-regular partition of constant size of any sparse graph. We state a minimum degree version of this lemma, whose proof can be found in the appendix of~\cite{ABET}.

\begin{lemma}[Minimum degree version of the sparse regularity lemma]
\label{lem:regularitylemma}
For each $\eps >0$, each  $\alpha \in [0,1]$, and $r_0\geq 1$ there exists
$r_1\geq 1$ with the following property. For any $d\in[0,1]$, any $p>0$,
and any $n$-vertex graph $G$ with minimum degree $\alpha p n$ such that for
any disjoint $X,Y\subset V(G)$ with $|X|,|Y|\ge\tfrac{\eps n}{r_1}$ we have
$e(X,Y)\le \big(1+\tfrac{1}{1000}\eps^2\big)p|X||Y|$, there is an
$(\eps,p)_G$-regular equipartition of $V(G)$ with $(\eps,d,p)_G$-reduced
graph $R$ satisfying $\delta(R) \geq (\alpha-d-\eps)|V(R)|$ and $r_0 \leq
|V(R)| \leq r_1$. \qed
\end{lemma}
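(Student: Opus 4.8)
The plan is to deduce this from the ordinary sparse regularity lemma of Kohayakawa, R\"odl and Scott by a standard cleanup step followed by a degree count, with the one genuine complication being that sparsity forces the degree count to be carried out edge-by-edge rather than vertex-by-vertex. Fix an auxiliary $\eps_1$ with $\eps_1 \ll \eps$ and an auxiliary lower bound $r_0' := \max\{2r_0,\lceil 1/\eps_1\rceil\}$ on the number of parts. Let $r_1'$ be the bound on the number of parts produced by the sparse regularity lemma applied with parameters $\eps_1$ and $r_0'$, and set $r_1 := r_1'$. Now, given $G$ as in the statement, the hypothesis that $e_G(X,Y) \le (1+\tfrac{1}{1000}\eps^2)p|X||Y|$ for all disjoint $X,Y$ with $|X|,|Y| \ge \eps n/r_1$ is exactly the upper-uniformity needed to invoke the sparse regularity lemma, which yields an $(\eps_1,p)_G$-regular equipartition $\{V_i\}_{i\in\{0,\dots,r\}}$ with $r_0' \le r \le r_1$.

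For the cleanup, note that at most $\eps_1\binom r2$ pairs $\{i,j\}$ fail to be $(\eps_1,0,p)_G$-fully-regular, so by a double-counting argument at most $\sqrt{\eps_1}\,r$ clusters lie in more than $\sqrt{\eps_1}\,r$ such pairs; I would move every one of these ``bad'' clusters into the exceptional set. The enlarged exceptional set $V_0'$ then has $|V_0'| \le (\eps_1+\sqrt{\eps_1})n \le \eps n$, the $r' \ge (1-\sqrt{\eps_1})r \ge r_0$ surviving clusters still all have the same size $m$, and at most $\eps\binom{r'}2$ pairs among them are not $(\eps,0,p)_G$-fully-regular, so this is an $(\eps,p)_G$-regular equipartition. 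Define $R$ on the surviving clusters by putting $ij\in E(R)$ exactly when $(V_i,V_j)$ is $(\eps_1,0,p)_G$-fully-regular with $d_{G,p}(V_i,V_j)\ge d$; since $\eps_1\ll\eps$, a short check shows each such pair is $(\eps,d,p)_G$-regular, so $R$ is a valid $(\eps,d,p)_G$-reduced graph of the partition.

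It remains to establish $\delta(R)\ge(\alpha-d-\eps)r'$, which is where the argument departs from the classical dense one: since $|V_0'|$ can be of order $\eps n$, which is \emph{not} negligible next to a neighbourhood size of order $\alpha pn$, one cannot control the number of neighbours of a single vertex lying outside the good clusters, and must instead count edges leaving an entire cluster. Fixing a surviving cluster $V_i$ of size $m$, summing $\deg_G(v)$ over $v\in V_i$ and peeling off the edges inside $V_i$ and the edges from $V_i$ to $V_0'$ gives a lower bound of the form $(\alpha-\bigO(\eps))pmn$ on $\sum_{j}e_G(V_i,V_j)$ where the sum runs over $R$-neighbours $j$ of $i$; here the error terms $e_G(V_i)$ and $e_G(V_i,V_0')$ are bounded by the upper-uniformity hypothesis, applying it to $V_0'$ only after padding $V_0'$ with one surviving cluster so that it meets the threshold $\eps n/r_1$, and using $m\le n/r_0'\le\eps n$. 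For the matching upper bound, each of the $\deg_R(i)$ neighbours contributes at most $(1+\tfrac{1}{1000}\eps^2)pm^2$, each of the at most $\sqrt{\eps_1}\,r$ surviving non-fully-regular partners of $i$ at most the same, and each remaining (fully-regular, density $<d$) non-neighbour strictly less than $dpm^2$. Comparing the two bounds, dividing through by $pm^2$, and using $r'm\le n$ yields $\deg_R(i)\ge(\alpha-d-\bigO(\eps))r'$; carrying the whole argument out with $\eps$ scaled down by an absolute constant and $\eps_1\ll\eps^2$ then gives the claimed minimum degree.

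I expect the main obstacle to be precisely this last step. The individual estimates are routine, but one has to abandon the vertex-by-vertex argument of the dense setting and in particular notice that the upper-uniformity hypothesis can still be applied to the (possibly small) exceptional set by padding it with a surviving cluster; everything else is bookkeeping of constants, which is why the proof is left to the appendix of~\cite{ABET}.
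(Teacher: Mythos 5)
Your proposal is correct and follows essentially the same route as the proof this paper relies on (the paper does not prove Lemma~\ref{lem:regularitylemma} itself but cites the appendix of~\cite{ABET}): apply the Kohayakawa--R\"odl--Scott sparse regularity lemma with finer parameters $\eps_1\ll\eps$, absorb the clusters lying in many irregular pairs into the exceptional set, define $R$ via dense fully-regular pairs, and establish the minimum degree by counting edges leaving a whole cluster, using the upper-uniformity hypothesis to control internal edges, edges to the exceptional set (padded if necessary), and edges in the remaining pairs. Two minor points: you should take $r_1$ somewhat larger than the bound $r_1'$ from the regularity lemma (which is free) so that $\eps n/r_1$ lies below the boundedness threshold that lemma requires at parameter $\eps_1$; and the closing remark about running the argument with ``$\eps$ scaled down by an absolute constant'' is not a legitimate reduction, since both the threshold $\eps n/r_1$ and the factor $1+\tfrac{1}{1000}\eps^2$ in the hypothesis strengthen as $\eps$ decreases --- but it is also unnecessary, because with $\eps_1\ll\eps^2$ the enlarged exceptional set has size $\bigO(\sqrt{\eps_1})n\ll\eps n$, so the relative error in your degree count is $d+\bigO(\sqrt{\eps_1})+\bigO(\eps^2)\le d+\eps$ and the claimed bound follows directly.
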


We will need the following version of the sparse regularity lemma (see e.g.~\cite[Lemma~29]{ABET} for a proof), allowing for a partition equitably refining an initial partition with parts of very different sizes. Given a partition $V(G)=V_1\dcup\dots\dcup V_s$, we say a partition $\{V_{i,j}\}_{i\in[s],j\in[t]}$ is an \emph{equitable $(\eps,p)$-regular refinement} of $\{V_i\}_{i\in[s]}$ if $|V_{i,j}|=|V_{i,j'}|\pm 1$ for each $i\in[s]$ and $j,j'\in[t]$, and there are at most $\eps s^2t^2$ pairs $(V_{i,j},V_{i',j'})$ which are not $(\eps,0,p)$-fully-regular. 

\begin{lemma}[Refining version of the sparse regularity lemma]
\label{lem:SRLb}
For each $\eps>0$ and $s\in\mathbb{N}$ there exists $t_1\geq 1$ such that
the following holds. Given any graph $G$, suppose $V_1\dcup\dots\dcup V_s$
is a partition of $V(G)$. Suppose that $e(V_i)\le 3p|V_i|^2$ for each
$i\in[s]$, and $e(V_i,V_{i'})\le 2p|V_i||V_{i'}|$ for each $i\neq
i'\in[s]$. Then there exist sets $V_{i,0}\subset V_i$ for each $i\in[s]$
with $|V_{i,0}|<\eps|V_i|$, and an equitable $(\eps,p)$-regular refinement
$\{V_{i,j}\}_{i\in[s],j\in[t]}$ of $\{V_i\setminus V_{i,0}\}_{i\in[s]}$ for
some $t\le t_1$. \qed
\end{lemma}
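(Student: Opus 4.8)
The plan is to run the index-increment (energy-increment) argument that underpins Szemer\'edi's regularity lemma, in the form which refines a prescribed partition, adapted to the sparse setting; the cleanest case is the one relevant to the applications, where every $V_i$ has size $\Omega(|V(G)|)$, and I describe that case (the general one needing only minor additional bookkeeping, cf.~\cite[Lemma~29]{ABET}). Two features distinguish the argument from the classical proof: the output partition must refine $\{V_i\}_{i\in[s]}$ and be equitable inside each $V_i$, so its cells may have rather different sizes; and in a sparse graph the $p$-normalised densities $d_{G,p}$ of cells can be as large as $p^{-1}$, so the naive index is not bounded independently of $p$. I would handle the latter by working with a capped index: fix a large constant $\Lambda=\Lambda(\eps)$ and, for a partition $\cP$ of $V(G)$ refining $\{V_i\}$, set
\[
  q(\cP):=\sum_{W,W'\in\cP}\frac{|W|\,|W'|}{|V(G)|^2}\min\bigl(d_{G,p}(W,W'),\Lambda\bigr)^2,
\]
so that $0\le q(\cP)\le\Lambda^2$; as usual, $q$ is (essentially) non-decreasing under refinement, by a defect Cauchy--Schwarz estimate.

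I would iterate, starting from $\cP_0=\{V_i\}_{i\in[s]}$ and maintaining at step $k$ a partition $\cP_k$ which is an equitable refinement of $\{V_i\setminus V_{i,0}\}_{i\in[s]}$ into $t_k$ cells per $V_i$. If at most $\eps s^2t_k^2$ cell pairs of $\cP_k$ fail to be $(\eps,0,p)$-fully-regular, we stop. Otherwise more than $\eps s^2t_k^2$ pairs are irregular, and here the hypotheses $e(V_i)\le 3p|V_i|^2$ and $e(V_i,V_{i'})\le 2p|V_i||V_{i'}|$ come in: they give $e(G)\le 3p|V(G)|^2$, so the cell pairs $(W,W')$ with $d_{G,p}(W,W')\ge\Lambda/2$ carry total weight $\sum|W||W'|=O(|V(G)|^2/\Lambda)$; since every cell has size $\Omega(|V(G)|/(st_k))$, there are $O\bigl((st_k)^2/\Lambda\bigr)$ such pairs, fewer than $\tfrac\eps2 s^2t_k^2$ once $\Lambda$ is large enough. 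Hence more than $\tfrac\eps2 s^2t_k^2$ irregular pairs have $p$-density below $\Lambda/2$; for each I would fix witnessing subsets as in the definition of fully-regular, refine every cell by the common refinement of all witnesses it participates in, and then re-cut each $V_i\setminus V_{i,0}$ into $t_{k+1}$ nearly-equal cells, moving the bounded number of leftover vertices into $V_{i,0}$. As the witnessing densities lie below $\Lambda$, the capping does not interfere with the standard gain estimate and $q(\cP_{k+1})\ge q(\cP_k)+c(\eps)$ for some $c(\eps)>0$.

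Since $q$ stays in $[0,\Lambda^2]$, the iteration halts after at most $K\le\Lambda^2/c(\eps)$ rounds, so $t:=t_K$ is bounded by a function $t_1=t_1(\eps,s)$, which is the first part of the conclusion. For the second part: each round adds at most $t_1$ vertices to any $V_{i,0}$, hence $|V_{i,0}|\le Kt_1$, which is below $\eps|V_i|$ once $|V(G)|$ (and so $|V_i|=\Omega(|V(G)|)$) is large enough.

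The step I expect to be the main obstacle is precisely making the number of iterations --- hence $t_1$ --- depend only on $\eps$ and $s$, and \emph{not} on $p$. For a general sparse graph this is false; it is the upper-density hypotheses on $\{V_i\}$ that rescue it, and in two ways: they bound the capped index by the absolute constant $\Lambda^2$, and they confine the bulk of the irregular cell pairs to bounded $p$-density, so that the capping does not destroy the per-step energy gain. Everything else --- the defect-Cauchy--Schwarz monotonicity and gain estimates for $q$, and the bookkeeping keeping the refinement equitable within each $V_i$ and each $V_{i,0}$ small --- is routine, and a fully detailed proof along these lines is carried out in~\cite[Lemma~29]{ABET}.
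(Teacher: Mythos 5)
Your overall strategy---an index-increment argument in which the hypotheses $e(V_i)\le 3p|V_i|^2$ and $e(V_i,V_{i'})\le 2p|V_i||V_{i'}|$ are used to keep the index bounded and to show that cell pairs of huge $p$-density are too few to matter---is indeed the standard route, and it is the route taken by the proof the paper points to (the paper itself only quotes the lemma from~\cite{ABET}). However, as written the sketch has two genuine gaps. The first concerns your reduction to the case $|V_i|=\Omega(|V(G)|)$: this is not the ``cleanest case'' of the lemma but rather excises its whole point. The lemma is introduced precisely to refine an initial partition ``with parts of very different sizes'', and in its application in Section~\ref{sec:pel} the initial parts are $W=N_{G'}(v)$, of size roughly $\mu pn$, next to clusters of size roughly $\mu n/r$, so the size ratio tends to $0$ with $n$. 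With your index, weighted by $|W||W'|/|V(G)|^2$, the per-step energy gain is proportional to the squared relative size of the cells carrying the irregular pairs: if the more than $\eps s^2t_k^2$ bad pairs sit inside a part of size $\Theta(pn)$, the gain is only $\Theta_{\eps}(p^2)$, the number of iterations is not bounded in terms of $\eps$ and $s$, and $t_1$ would depend on $p$ --- exactly what the statement forbids. This is not minor bookkeeping; the necessary (small) change is to weight each cell pair $(W,W')$ with $W\subset V_i$, $W'\subset V_{i'}$ by $|W||W'|/\big(s^2|V_i||V_{i'}|\big)$, i.e.\ to normalise within each ambient pair, which matches how irregular pairs are counted in the definition of an equitable $(\eps,p)$-regular refinement, makes the gain per step a constant depending only on $\eps$ and $s$, and lets the stated per-pair edge bounds (rather than the derived global bound $e(G)\le 3p|V(G)|^2$) control both the index and the number of high-density cell pairs.

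The second gap is the parenthetical ``(essentially) non-decreasing under refinement''. The function $x\mapsto\min(x,\Lambda)^2$ is not convex, so the Cauchy--Schwarz/Jensen monotonicity of the index genuinely fails for the capped index: a cell pair of $p$-density close to $\Lambda$ can split into children of densities $0$ and $2\Lambda$, and its capped contribution halves. Since your hypotheses bound only the densities of the top-level pairs $(V_i,V_{i'})$ --- sub-pairs can have $p$-density as large as $p^{-1}$ --- you cannot wave this away, and the total capping loss in a step is not obviously smaller than the $\Omega_{\eps}(1)$ gain you extract from the witnesses; note also that the uncapped index is unbounded here, so you cannot simply drop the cap. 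The standard repair (this is the key point of Scott's sparse regularity lemma~\cite{scott2011}, on which the cited proof relies) is to replace $\min(x,\Lambda)^2$ by a convex function of linear growth, e.g.\ $\phi(x)=x^2$ for $x\le\Lambda$ and $\phi(x)=2\Lambda x-\Lambda^2$ for $x>\Lambda$: convexity restores monotonicity under refinement, linear growth together with the assumed edge bounds gives $\sum w\,\phi(d)=O(\Lambda)$ per ambient pair, and the defect inequality still yields a constant gain from irregular pairs of density at most $\Lambda/2$, which by your Markov-type count are the vast majority. With these two corrections (per-ambient-pair normalisation and a convex linear-growth index) your argument goes through and coincides with the cited proof; without them the iteration count, and hence $t_1$, cannot be bounded independently of $p$.
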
 

A key ingredient in the proof of our main theorem is the so-called sparse
blow-up lemma established in~\cite{blowup}. Given a subgraph $G \subseteq \Gamma =G(n,p)$ with $p \gg (\log n/n)^{1/\Delta}$ and an $n$-vertex graph $H$ with maximum degree at most $\Delta$ with vertex partitions $\cV$ and $\cW$, respectively, the sparse blow-up lemma guarantees under certain conditions a spanning embedding of $H$ in $G$ which respects the given partitions. In order to state this lemma we need some definitions.

Let $G$ and $H$ be graphs on $n$ vertices with partitions
$\cV=\{V_i\}_{i\in[r]}$ of $V(G)$ and $\cW=\{W_i\}_{i\in[r]}$ of $V(H)$. We
say that $\cV$ and $\cW$ are \emph{size-compatible} if  $|V_i|=|W_i|$ for
all $i\in[r]$. If there exists an integer $m \geq 1$ such that $m \leq
|V_i| \leq \kappa m$ for every $i\in [r]$, then we say that $(G,\cV)$ is
\emph{$\kappa$-balanced}. Given a graph $R$ on $r$ vertices, we call $(H,
\cW)$ an \emph{$R$-partition} if for every edge $\{x,y\}\in E(H)$ with $x
\in W_i$ and $y\in W_{i'}$ we have $\{i,i'\}\in E(R)$. The following
definition allows for image restrictions in the sparse blow-up lemma.

\begin{definition}[Restriction pair]
\label{def:restrict} 
 Let $\eps,d>0$, $p \in [0,1]$, and let $R$ be a graph on $r$ vertices. Furthermore, let $G$ be a (not necessarily spanning) subgraph of $\Gamma = G(n,p)$ and let $H$ be a graph given with vertex partitions $\cV= \{V_i\}_{i\in[r]}$ and $\cW = \{W_i\}_{i\in[r]}$, respectively, such that $(G,\cV)$ and $(H,\cW)$ are size-compatible $R$-partitions.
  Let $\cI=\{I_x\}_{x\in V(H)}$ be a collection of subsets of $V(G)$, called
  \emph{image restrictions}, and $\cJ=\{J_x\}_{x\in V(H)}$ be a collection of
  subsets of $V(\Gamma)\setminus V(G)$, called \emph{restricting vertices}.
   For each $i\in [r]$ we define $R_i\subseteq W_i$ to be the set of all vertices $x \in W_i$ for which $I_x \neq V_i$. 
  We say that $\cI$ and $\cJ$ are a
  \emph{$(\rho,\zeta,\Delta,\Delta_J)$-restriction pair} if the
  following properties hold for each $i\in[r]$ and $x\in W_i$.
  \begin{enumerate}[label=\itmarab{RP}]
    \item\label{itm:restrict:numres} We have $|R_i|\leq\rho|W_i|$.
    \item\label{itm:restrict:sizeIx} If $x\in R_i$, then $I_x\subseteq
    \bigcap_{u\in J_x} N_\Gamma(u, V_i)$ is of size at least $\zeta(dp)^{|J_x|}|V_i|$.
    \item\label{itm:restrict:Jx} If $x\in R_i$, then $|J_x|+\deg_H(x)\leq\Delta$ and 
    if $x\in W_i\setminus R_i$, then $J_x=\varnothing$.
    \item\label{itm:restrict:DJ} Each vertex in $V(G)$ appears in at most $\Delta_J$ of the sets of $\cJ$.
    \item\label{itm:restrict:sizeGa} We have
    $\big|\bigcap_{u\in J_x} N_\Gamma(u, V_i)\big| = (p\pm\eps p)^{|J_x|}|V_i|$.
    \item\label{itm:restrict:Ireg} If $x\in R_i$, for each $xy\in E(H)$ with $y\in W_j$, 
    \[\text{the pair }\quad\Big( V_i \cap \bigcap_{u\in J_x}N_\Gamma(u), V_j \cap \bigcap_{v\in J_y}N_\Gamma(v)\Big)\quad\text{ is
    $(\eps,d,p)_G$-regular.}\] 
  \end{enumerate}
\end{definition}

The sparse blow-up lemma needs not all pairs in the reduced
graph~$R$ to be super-regular, but only those in a subgraph~$R'$
of~$R$. This, however, is only possible if a good proportion of~$H$ is
embedded to the pairs in~$R'$.
The following definition of buffer-sets makes this requirement
precise. Moreover, we need certain regularity inheritance properties for
the pairs in~$R'$.

\begin{definition}[$(\vartheta, R')$-buffer, regularity inheritance]
\label{def:buffer}
Let $R$ and $R'$ be graphs on vertex set $[r]$ with $R'\subset R$.
Suppose that $(H,\cW)$ is an $R$-partition and that
$(G,\cV)$ is a size-compatible $(\eps,d,p)_G$-regular partition with reduced
graph~$R$. We say
that the family $\tcW=\{\tW_i\}_{i\in[r]}$ of subsets $\tW_i\subseteq W_i$ is an \emph{$(\vartheta,R')$-buffer} for $H$ if
 \begin{enumerate}[label=\rom]
  \item $|\tW_i|\geq\vartheta |W_i|$ for all $i\in[r]$,  and 
  \item for each $i\in[r]$ and each $x\in\tW_i$, the first and second neighbourhood of $x$ go along $R'$, i.e.,\
  for each $\{x,y\},\{y,z\}\in E(H)$ with $y\in W_j$ and $z\in W_k$ we have $\{i,j\}\in E(R')$ and $\{j,k\}\in E(R')$.
 \end{enumerate}
 We say $(G,\cV)$ has \emph{one-sided inheritance} on $R'$ if for
every $\{i,j\}, \{j,k\}\in E(R')$ and every $v\in V_i$ the pair
$\big(N_\Gamma(v, V_j),V_k\big)$ is $(\eps,d,p)_G$-regular.
We say $(G,\cV)$ has \emph{two-sided inheritance} on $R'$ for $\tcW$ if
for each $i,j,k \in V(R')$ such that there is a
triangle $x_ix_jx_k$ in~$H$ with $x_i\in \tW_i$, $x_j\in W_j$, and
   $x_k\in W_k$ the following holds.
For every $v\in V_i$ the pair $\big(N_\Gamma(v, V_j),N_\Gamma(v, V_k)\big)$ is
$(\eps,d,p)_G$-regular.
\end{definition}

Now we can finally state the sparse blow-up lemma.

\begin{lemma}[{Sparse blow-up lemma~\cite[Lemma 1.21]{blowup}}]
\label{thm:blowup}
  For each $\Delta$, $\Delta_{R'}$, $\Delta_J$, $\vartheta,\zeta, d>0$, $\kappa>1$
  there exist $\eBL,\rho>0$ such that for all $r_1$ there is a $\CBL$ such that for
  $p\geq\CBL(\log n/n)^{1/\Delta}$ the random graph $\Gamma=G_{n,p}$ asymptotically
  almost surely satisfies the following.
   
  Let $R$ be a graph on $r\le r_1$ vertices and let $R'\subseteq R$ be a spanning
  subgraph with $\Delta(R')\leq \Delta_{R'}$.
  Let $H$ and $G\subseteq \Gamma$ be graphs given with $\kappa$-balanced,
  size-compatible vertex partitions 
  $\cW=\{W_i\}_{i\in[r]}$ and $\cV=\{V_i\}_{i\in[r]}$ with parts of size at
  least $m\geq n/(\kappa r_1)$. 
  Let $\cI=\{I_x\}_{x\in V(H)}$ be a family of image restrictions, and
  $\cJ=\{J_x\}_{x\in  V(H)}$  be a family of restricting vertices.
  Suppose that
  \begin{enumerate}[label=\itmarab{BUL}]
  \item\label{itm:blowup:H} $\Delta(H)\leq \Delta$, 
	for every edge $\{x,y\}\in E(H)$ with $x\in W_i$ and $y\in W_j$ we have $\{i,j\}\in E(R)$ and $\tcW=\{\tW_i\}_{i\in[r]}$ is an
    $(\vartheta,R')$-buffer for $H$,
	\item\label{itm:blowup:G} $(G,\cV)$ is $(\eBL,d,p)_G$-regular on $R$, $(\eBL,d,p)_G$-super-regular on $R'$, has one-sided inheritance on $R'$, and two-sided inheritance on $R'$ for $\tcW$,
  \item\label{itm:blowup:restrict} $\cI$ and $\cJ$ form
    a $(\rho,\zeta,\Delta,\Delta_J)$-restriction pair.
  \end{enumerate}
  Then there is an embedding $\phi\colon V(H)\to V(G)$ such that $\phi(x)\in
  I_x$ for each $x\in H$. \qed
\end{lemma}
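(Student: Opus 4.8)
Since Lemma~\ref{thm:blowup} is quoted from~\cite{blowup}, a proof proposal here amounts to sketching how one reproves the sparse blow-up lemma; the plan is a randomised greedy embedding following the Komlós--Sárközy--Szemerédi template, adapted throughout to the sparse regime. First I would record the properties of $\Gamma=G_{n,p}$ that hold a.a.s.\ once $p\ge\CBL(\log n/n)^{1/\Delta}$ and that the argument relies on: upper-uniformity of edge counts between linear vertex sets (so that $p$-densities are well controlled, and so that Lemma~\ref{lem:regularitylemma} applies downstream); concentration of the number of copies of each small graph rooted at a vertex, so that iterated neighbourhood intersections of up to $\Delta$ vertices have size $(p\pm\littleO(p))^{j}|V_i|$ as demanded by~\ref{itm:restrict:sizeGa}; and the one-sided and two-sided regularity-inheritance statements, which assert that for all but a tiny fraction of vertices $v$ the restriction of an $(\eps,d,p)_G$-regular pair to $N_\Gamma(v)$ on one (resp.\ both) side(s) remains regular with slightly worse parameters. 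These inheritance lemmas are the sparse replacement for the trivial fact, used freely in the dense blow-up lemma, that regularity passes to almost all vertex neighbourhoods; crucially they hold only for $p$ above the $\Delta$-th root threshold, which is exactly where that hypothesis on $p$ is consumed. After this I fix $\eBL,\rho$ small enough in terms of $\Delta,\Delta_{R'},\Delta_J,\vartheta,\zeta,d,\kappa$.

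The embedding proceeds in two phases. Order $V(H)$ so that every vertex of $\bigcup_i\tW_i$ comes after every non-buffer vertex, processing one vertex at a time. For each not-yet-embedded $x\in W_i$ maintain a candidate set $C_x\subseteq V_i$, initialised to $I_x$ — which by~\ref{itm:restrict:sizeIx} already lies in $\bigcap_{u\in J_x}N_\Gamma(u,V_i)$ and has size at least $\zeta(dp)^{|J_x|}|V_i|$ — and each time a vertex $y\in W_j$ with $xy\in E(H)$ is embedded to $\phi(y)$, replace $C_x$ by $C_x\cap N_G(\phi(y),V_i)$. In phase~1 embed each non-buffer $x$ to a uniformly random element of $C_x$ with the already-used vertices (and a small reserved ``buffer reservoir'' in each cluster of $R'$) removed. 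The analysis lemma driving this is that every candidate set keeps size at least a constant times its initial size times $d^{(\text{number of embedded }H\text{-neighbours})}$: whenever the constraint $N_G(\phi(y),V_i)$ is added, $(\eBL,d,p)_G$-regularity of $(V_i,V_j)$, applied to the still-linear current candidate set, forces retention of a $\ge d-\eBL$ proportion, and one-sided inheritance on $R'$ handles the candidate sets of vertices two steps from $\phi(y)$ while two-sided inheritance on $R'$ for $\tcW$ handles exactly the situation where $y$ lies in a triangle of $H$ through a buffer vertex $x$, so that $C_x$ stays regular against its other constrained coordinate. A martingale/bounded-differences concentration estimate, exposing the choices $\phi(x)$ one at a time and union-bounding over the boundedly-many-per-cluster relevant candidate sets, shows the number of used vertices in each $C_x$ is close to its expectation, so free candidates remain abundant throughout phase~1.

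After phase~1 everything but $\bigcup_i\tW_i$ is embedded, each cluster $V_i$ retains a linear free set $V_i'$, and by~\ref{itm:blowup:G} the pairs we still need are $(\eBL,d,p)_G$-super-regular and, since $\tcW$ is a $(\vartheta,R')$-buffer, each unembedded $x\in\tW_i$ has all $H$-neighbours already embedded, so $C_x$ is a fixed subset of $V_i'$ of size at least $c\,p^{\deg_H(x)}|V_i|$. Phase~2 finishes the embedding by solving a system-of-distinct-representatives problem: match the remaining buffer vertices to the free cluster vertices so that $x\mapsto$ some element of $C_x$, respecting size-compatibility. This is a Hall-type argument whose expansion condition is verified from super-regularity — the $(d-\eBL)p|V_j|$ lower bounds in Definition~\ref{def:regular} for super-regular pairs together with $|V_i'|\ge\vartheta|V_i|/2$ give the defect-Hall condition cluster by cluster, exactly as in the endgame of the dense blow-up lemma. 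Since $C_x\subseteq I_x$ throughout, the resulting $\phi$ satisfies $\phi(x)\in I_x$ for every $x$.

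The main obstacle is the book-keeping of phase~1 in the sparse regime: unlike the dense blow-up lemma, here candidate sets may be only polylogarithmically large (they have order $p^{\Delta}n$, and image-restricted vertices start already shrunk by $(dp)^{|J_x|}$), so a crude concentration bound does not suffice to rule out a candidate set being emptied prematurely, and one needs the sharp inheritance lemmas plus a careful choice of the reserved ``reservoir'' to survive until the matching. Managing the three-way interaction of image restrictions~$\cI$, the buffer~$\tcW$, and two-sided inheritance — keeping second neighbourhoods of buffer vertices regular even after their neighbours' candidate sets have been cut down — is the delicate point, and is precisely what forces the full strength of Definitions~\ref{def:restrict} and~\ref{def:buffer}.
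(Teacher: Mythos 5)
You cannot really be compared against the paper here: Lemma~\ref{thm:blowup} is imported verbatim from the cited blow-up paper and is stated with a \qed precisely because the present authors give no proof of it; the only legitimate ``proof'' inside this paper is the citation. What you have written is a sketch of how one would reprove the sparse blow-up lemma itself, and in broad outline it does shadow the strategy of the cited work (random greedy embedding of the non-buffer vertices using candidate sets and the one-/two-sided inheritance properties, followed by a matching step that places the buffer vertices), so the high-level plan is not wrong.

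However, as a proof it has concrete gaps. First, your phase-2 claim that after phase~1 ``each unembedded $x\in\tW_i$ has all $H$-neighbours already embedded'' is false as stated: buffer vertices may be adjacent in $H$, and the actual argument first passes to a subfamily of the buffer whose vertices are pairwise at distance at least three in $H$ (possible since $|\tW_i|\ge\vartheta|W_i|$ and $\Delta(H)\le\Delta$); without this reduction the endgame is not a bipartite matching problem at all. Second, the image-restricted vertices cannot be treated like the others: their candidate sets start at size $\zeta(dp)^{|J_x|}|V_i|$ and after embedding their $H$-neighbours can shrink to order $p^{\Delta}|V_i|=\Theta(\log n)$, where your ``martingale/bounded-differences'' concentration gives failure probabilities nowhere near small enough for a union bound over $n$ vertices; in the cited proof these vertices are embedded first and their number is capped by $\rho|W_i|$ (condition~\ref{itm:restrict:numres}) exactly for this reason. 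Third, the greedy phase cannot be made to keep \emph{every} candidate set large: the genuine proof only guarantees this for most vertices, maintains a queue of exceptional vertices whose candidate sets or inheritance properties go bad, and the final matching (verified via the super-regular degree bounds and concentration, not merely a cluster-by-cluster defect-Hall) has to absorb these exceptions. So your sketch is a reasonable roadmap to the cited ~100-page argument, but it is not a proof, and the paper under review neither contains nor needs one.
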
	

Observe that in the blow-up lemma for dense graphs, proved by Koml{\'o}s, S{\'a}rk{\"o}zy, and Szemer{\'e}di~\cite{komlos1997blow}, one does not need to explicitly ask for one- and two-sided inheritance properties since they are always fulfilled by dense regular partitions. This is, however, not true in general in the sparse setting. The following two lemmas will be very useful whenever we need to redistribute vertex partitions in order to achieve some regularity inheritance properties.

\begin{lemma}[One-sided regularity inheritance~\cite{blowup}]
\label{lem:OSRIL}
For each $\eo, \ao >0$ there exist $\eps_0 >0$ and $C >0$ such that for any
$0 < \eps < \eps_0$ and $0 < p <1$  asymptotically almost surely $\Gamma=
G(n,p)$ has the following property. For any disjoint sets $X$ and $Y$ in
$V(\Gamma)$ with $|X|\geq C\max\big(p^{-2}, p^{-1} \log n\big)$ and $|Y|
\geq C p^{-1} \log n$, and any subgraph $G$ of $\Gamma[X,Y]$ which is
$(\eps, \ao,p)_G$-regular, there are at most $C p^{-1}\log (en/|X|)$
vertices $z \in V(\Gamma)$ such that $(X \cap N_{\Gamma}(z),Y)$ is not
$(\eo,\ao,p)_G$-regular. \qed
\end{lemma}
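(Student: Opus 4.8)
The plan is to split the statement into a deterministic reduction --- using only the regularity of $G$ --- and a probabilistic estimate about common neighbourhoods in $\Gamma$, and to prove the latter by Chernoff bounds together with a union bound over the admissible sets $X$; the bound $Cp^{-1}\log(en/|X|)$ on the exceptional set is exactly what makes that union bound affordable.

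\emph{Deterministic reduction.} Fix disjoint $X,Y$ and an $(\eps,\ao,p)_G$-regular $G\subseteq\Gamma[X,Y]$, and suppose $z$ is such that $\big(X\cap N_\Gamma(z),Y\big)$ is not $(\eo,\ao,p)_G$-regular; write $N_z:=X\cap N_\Gamma(z)$. By definition there are $A'\subseteq N_z$ and $B'\subseteq Y$ with $|A'|\ge\eo|N_z|$, $|B'|\ge\eo|Y|$, and $e_G(A',B')<(\ao-\eo)p|A'||B'|$. We may assume $\eps<\eo$ (by choosing $\eps_0$ small in terms of $\eo$). Applying the regularity of $G$ to pairs $(S,B')$ with $S\subseteq X$, $|S|\ge\eps|X|$, gives two facts: first $|A'|<\eps|X|$, hence $|N_z|<\tfrac{\eps}{\eo}|X|$, since $(A',B')$ would otherwise witness a density below $\ao-\eps$; and second that the ``$B'$-light set'' $P:=\{x\in X:\deg_G(x,B')<(\ao-2\eps)p|B'|\}$ has $|P|<\eps|X|$, as $e_G(P,B')<(\ao-2\eps)p|P||B'|$ otherwise contradicts regularity. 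Since every $x\in A'\setminus P$ sends at least $(\ao-2\eps)p|B'|$ edges of $G$ into $B'$ we get $e_G(A',B')\ge|A'\setminus P|\,(\ao-2\eps)p|B'|$, so $|A'\cap P|\ge\tfrac{\eo-2\eps}{\ao}|A'|\ge c_1|N_z|$ with $c_1:=\tfrac{\eo^2}{2\ao}$ once $\eps\le\eo/4$. As $A'\cap P\subseteq N_\Gamma(z)\cap P$, every such $z$ has at least $c_1|N_z|$ of its $\Gamma$-neighbours inside the set $P$ of fewer than $\eps|X|$ vertices of $X$; here $P$ depends on $B'$ but not on $z$, and if $z\in X$ we may replace $P$ by $P\setminus\{z\}$, so we may also assume $z\notin P$.

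\emph{Probabilistic core.} It thus suffices to show that a.a.s.\ $\Gamma=G(n,p)$ has both of the following, for our choice of $C$ and $\eps_0$. (i) For every $X$ with $|X|\ge C\max(p^{-2},p^{-1}\log n)$ there are fewer than $\tfrac12 Cp^{-1}\log(en/|X|)$ vertices $z$ with $|N_\Gamma(z)\cap X|<\tfrac12 p|X|$. (ii) For every such $X$ and every $P\subseteq X$ with $|P|<\eps|X|$ there are fewer than $\tfrac12 Cp^{-1}\log(en/|X|)$ vertices $z\notin P$ with $|N_\Gamma(z)\cap P|\ge\tfrac12 c_1 p|X|$. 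Granting these: a bad $z$ with $|N_z|<\tfrac12 p|X|$ is counted in (i); a bad $z$ with $|N_z|\ge\tfrac12 p|X|$ produces, by the reduction, a set $P$ with $|P|<\eps|X|$, $z\notin P$ and $|N_\Gamma(z)\cap P|\ge c_1|N_z|\ge\tfrac12 c_1 p|X|$; and if $p\ge 2\eps/\eo$ the second case cannot occur at all (by $|N_z|<\tfrac{\eps}{\eo}|X|$), so (i) alone then finishes. To prove (i) and (ii) one fixes the set(s), writes the relevant count as a sum over $z$ outside the set of independent indicators (they depend on pairwise disjoint edge-sets), applies a Chernoff bound --- for (ii) the threshold $\tfrac12 c_1 p|X|$ is at least $e^2$ times the mean $p|P|<\eps p|X|$ once $\eps\le c_1/(2e^2)$ --- and then union bounds. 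The failure probability for a single choice is at most $\exp\!\big(-\Omega\!\big(p|X|\cdot Cp^{-1}\log(en/|X|)\big)\big)=\exp\!\big(-\Omega\big(C\,|X|\log(en/|X|)\big)\big)$, with the implied constant depending on $\eo,\ao$; since $\binom{n}{|X|}\le\exp\!\big(|X|\log(en/|X|)\big)$ while $\binom{|X|}{<\eps|X|}=\exp(o(|X|))$ for small $\eps$, taking $C=C(\eo,\ao)$ large enough makes the union bound over all $X$ and all $P$ tend to $0$.

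\emph{Main obstacle.} The weak point is the step ``a bad $z$ produces a set $P$ and we invoke (ii)'': the light set $P$ depends on the witness $B'$, hence on $z$, so distinct bad $z$ may be certified by distinct $P$. Statement (ii) controls the bad $z$ certified by any \emph{fixed} $P$, but the admissible $P$ number $\exp\!\big(\Theta(|X|\log(en/|X|))\big)$, so summing (ii) over them re-introduces a fatal over-count. Closing this gap is the heart of the proof, and --- outside the easy dense regime $p=\Theta(1)$, which reduces to the classical inheritance lemma --- it seems to require an argument better than a union bound over the exponentially many candidate witnesses $B'$: for instance, a bound on the number of ``sparse configurations'' $(A',B')$ that can coexist inside a single $(\eps,\ao,p)_G$-regular subgraph $G$ of $\Gamma$, using crucially that $G$ is obtained from $\Gamma$ by edge-deletions that are globally constrained by regularity, or a defect/second-moment estimate in the spirit of the classical dense regularity inheritance lemma. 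Once that is in place, the Chernoff estimates above, the routine treatment of the few vertices $z\in X$, and the verification that $p\ge 2\eps/\eo$ makes the awkward case vacuous complete the argument.
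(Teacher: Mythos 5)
You have correctly diagnosed your own attempt: the argument as written does not prove the lemma, and the hole you flag is not a technicality but the entire content of the statement. Your deterministic reduction and the two Chernoff statements (i), (ii) are fine, but they only show that each \emph{fixed} light set $P$ certifies at most $\tfrac12 Cp^{-1}\log(en/|X|)$ bad vertices. Since the certificate $P=P(B'_z)$ varies with $z$, nothing prevents the map $z\mapsto P_z$ from being essentially injective, in which case no bound on the total number of bad $z$ follows; and, as your own accounting shows, paying for the certificate per bad vertex costs $e^{\Theta(|X|)}$ against a gain of only $e^{-\Theta(p|X|)}$, which is fatal whenever $p\ll 1$ (the observation that the case $p\ge 2\eps/\eo$ is vacuous only disposes of the regime that is irrelevant for the applications, where $\eps$ is a constant and $p\to 0$). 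So there is a genuine gap, and it sits exactly at the heart of the lemma.

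For comparison: the paper does not prove Lemma~\ref{lem:OSRIL} at all --- it is quoted from~\cite{blowup} --- so the relevant benchmark is the proof there, and it runs through precisely the ingredient you speculate about at the end. The engine is the Gerke--Kohayakawa--R\"odl--Steger counting result, stated in this paper as Corollary~\ref{cor:TSI} (and explicitly flagged as the tool behind the two-sided version, Lemma~\ref{lem:TSRIL}): in a single $(\eps,d,p)$-regular pair, for each $w\ge C/p$ all but a $\beta^{w}$-fraction of the $w$-subsets of $X$ inherit $(\eps',d,p)$-regularity with $Y$. This replaces your ``light set'' certificate by a statement with no per-vertex entropy cost: since $|N_\Gamma(z)\cap X|$ concentrates around $p|X|\ge C\max(p^{-1},\log n)$ and, conditioned on its size, $N_\Gamma(z)\cap X$ is a uniform subset of $X$ with the neighbourhoods of distinct $z$ depending on disjoint edge sets, one gets an $e^{-\Omega(p|X|)}$-type bound per bad vertex that a union bound over the candidate sets of $Cp^{-1}\log(en/|X|)$ bad vertices (and over $X,Y$) can absorb. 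Proving the counting statement itself is the real work --- it is the main theorem of~\cite{GKRS}, not something recoverable from Chernoff bounds plus a union bound over witnesses, exactly because the witness entropy is too large in the sparse regime. So to complete your write-up you would have to import Corollary~\ref{cor:TSI} (or reprove it), at which point your statements (i)/(ii) and the treatment of atypical neighbourhood sizes become the routine part of the argument.
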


\begin{lemma}[Two-sided regularity inheritance~\cite{blowup}]
\label{lem:TSRIL}
For each $\et,\at>0$ there exist $\eps_0>0$ and
$C >0$ such that for any $0<\eps<\eps_0$ and $0<p<1$, asymptotically almost surely
$\Gamma=G_{n,p}$ has the following property. For any disjoint sets $X$
and $Y$ in $V(\Gamma)$ with $|X|,|Y|\ge C\max\{p^{-2},p^{-1}\log n\}$, and any
subgraph $G$ of $\Gamma[X,Y]$ which is $(\eps,\at,p)_G$-regular, there are
at most $C\max\{p^{-2},p^{-1}\log (en/|X|)\}$ vertices $z \in V(\Gamma)$
such that $\big(X\cap N_\Gamma(z),Y\cap N_\Gamma(z)\big)$ is not
$(\et,\at,p)_G$-regular. \qed
\end{lemma}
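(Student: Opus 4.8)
The plan is to adapt the method of Gerke, Kohayakawa, R\"odl and Steger for the inheritance of sparse regularity by random subsets to the situation in which \emph{both} sides of the pair are restricted to a random neighbourhood. The starting point is the standard reduction of lower-regularity to two \emph{counting} conditions: there is $\eta=\eta(\et)>0$ such that a bipartite pair $(A,B)$ with $|A|,|B|$ large fails to be $(\et,\at,p)_G$-regular only if either (i)~$e_G(A,B)<(\at-\eta)p|A||B|$, or (ii)~$e_G(A,B)\ge(\at-\eta)p|A||B|$ but $G[A,B]$ contains more than $(1+\eta)\big(e_G(A,B)/(|A||B|)\big)^{4}|A|^{2}|B|^{2}$ labelled $4$-cycles; this is the usual defect form of the Cauchy--Schwarz inequality underlying the (sparse) counting lemma. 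I would apply this with $(A,B)=\big(X\cap N_\Gamma(z),\,Y\cap N_\Gamma(z)\big)$, having first discarded the few vertices $z$ --- by a Chernoff bound, union bounded over the at most $\binom{n}{|X|}\binom{n}{|Y|}$ choices of $X$ and $Y$, which is already where the $\log(en/|X|)$ term enters --- for which $|X\cap N_\Gamma(z)|\notin(1\pm\tfrac12)p|X|$ or $|Y\cap N_\Gamma(z)|\notin(1\pm\tfrac12)p|Y|$.

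For condition (i) I would bootstrap from Lemma~\ref{lem:OSRIL}: for all but $Cp^{-1}\log(en/|X|)$ vertices $z$ the pair $(X\cap N_\Gamma(z),Y)$ is lower-regular, hence $e_G(X\cap N_\Gamma(z),Y)\ge(\at-\tfrac{\eta}{2})p\,|X\cap N_\Gamma(z)|\,|Y|$; since $Y\cap N_\Gamma(z)$ is a binomial random subset of $Y$ of size $\Omega(p|Y|)$ and every $G$-degree from $X\cap N_\Gamma(z)$ into $Y$ is at most $(1+o(1))p|Y|$ for typical $\Gamma$, a Chernoff bound gives $e_G(X\cap N_\Gamma(z),Y\cap N_\Gamma(z))=\big(1\pm\tfrac{\eta}{2\at}\big)p\cdot e_G(X\cap N_\Gamma(z),Y)$ for all but few further $z$. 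For condition (ii) I would count $4$-cycles directly: the labelled $4$-cycles of $G$ inside $\big(X\cap N_\Gamma(z),Y\cap N_\Gamma(z)\big)$ are exactly those $4$-cycles of $G[X,Y]$ all of whose vertices lie in $N_\Gamma(z)$. Using the lower-regularity of $G$ together with $\max_{x}|N_G(x,Y)|\le(1+o(1))p|Y|$ and $\max_{y}|N_G(y,X)|\le(1+o(1))p|X|$ (again typicality of $\Gamma$), $G[X,Y]$ has $\Theta\big((\at p)^{4}|X|^{2}|Y|^{2}\big)$ labelled $4$-cycles and no single vertex lies in more than $\Theta\big((\at p)^{4}|X||Y|^{2}\big)$ of them. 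Thus the number $N(z)$ of surviving $4$-cycles is a degree-$4$ polynomial in the independent indicators $\{\,z\sim v:v\in X\cup Y\,\}$ with $\Ex N(z)=\Theta\big((\at p)^{4}(p|X|)^{2}(p|Y|)^{2}\big)$ --- exactly the value appearing in (ii) --- and whose dominant second-order term, the maximal single-vertex contribution $\Theta\big((\at p)^{4}|X||Y|^{2}\big)\cdot p^{3}$, is smaller than the mean by a factor $\Omega(p|X|)=\Omega(\log n)$ by the hypothesis $|X|\ge Cp^{-1}\log n$. A Janson-type upper-tail inequality then yields $\Pr\big[N(z)>(1+\eta)\Ex N(z)\big]\le n^{-\Omega(\eta^{2}C)}$, and, choosing $C$ large in terms of $1/\eta$ and $1/\at$, a union bound over $z$ and over the choices of $X$ and $Y$ (once more absorbing the cost of locating a small set $X$ into the $\log(en/|X|)$ term) leaves at most $C\max\{p^{-2},\,p^{-1}\log(en/|X|)\}$ bad $z$ in total.

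The step I expect to be the real obstacle is condition (ii). The difficulty is that $G$ is chosen adversarially \emph{after} $\Gamma$ has been exposed, so one cannot first condition on $G$ and only then reveal the edges at $z$: the concentration estimate and the union bound must hold uniformly over \emph{all} lower-$(\eps,\at,p)_G$-regular $G\subseteq\Gamma[X,Y]$. What makes this feasible is precisely the reduction above --- an irregular link forces a combinatorial witness contained entirely in $N_\Gamma(z)$, namely a sub-pair on $\Omega(p|X|)$ vertices of $G[X,Y]$ with anomalous edge- or $4$-cycle-count, and the regularity of $G$ together with the typicality of $\Gamma$ simultaneously bound how many such witnesses can exist and make it very unlikely that $z$ is $\Gamma$-adjacent to all the vertices of one. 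Choosing the exponents so that the per-vertex failure probability survives the union bound over the location of $X$ and $Y$, and so that the final count comes out as $C\max\{p^{-2},p^{-1}\log(en/|X|)\}$ rather than something weaker, is where the bookkeeping has to be done carefully.
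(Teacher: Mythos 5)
You should first be aware that this paper does not prove Lemma~\ref{lem:TSRIL} at all: it is imported from~\cite{blowup}, and the only indication given here of how it is proved is the remark that Corollary~\ref{cor:TSI} (in a lower-regular pair, the number of subpairs of prescribed sizes that fail to be regular is exponentially small, at most $\beta^{\min(w_1,w_2)}$ times the total) ``is used to prove Lemma~\ref{lem:TSRIL}''. That route treats $\big(X\cap N_\Gamma(z),Y\cap N_\Gamma(z)\big)$, conditioned on its part sizes, as a uniformly random subpair, so a single vertex $z$ is bad with probability of order $\beta^{\Omega(p\min(|X|,|Y|))}$; this exponential-in-$p|X|$ cost is what allows a union bound over the $\binom{n}{|X|}\binom{n}{|Y|}$ choices of $X,Y$ \emph{and} over a candidate set of $C\max\{p^{-2},p^{-1}\log(en/|X|)\}$ simultaneously bad vertices, which is also how one sidesteps the fact that $G$ is chosen adversarially after $\Gamma$ is exposed. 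Your proposal takes a genuinely different route ($C_4$-counts plus polynomial concentration), and I do not believe it can be repaired.

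The central obstacle is your reduction to 4-cycle counts. The implication ``density right and $C_4$-count at most $(1+\eta)$ times the fourth power of the density $\Rightarrow$ regular'' is a dense-graph fact; in the regime this lemma must cover it is vacuous. Since here $p\le 1/\log n$, the binding lower bound is $|X|,|Y|\ge Cp^{-2}$, and at that boundary the restricted parts have size about $p|X|=Cp^{-1}$ while the expected number of labelled $4$-cycles in the link is of order $(\at p)^4(p|X|)^2(p|Y|)^2=\Theta(C^4)$ --- a constant. There is no $(1\pm\eta)$-concentration to be had for a statistic of constant mean (your claimed per-$z$ failure probability $n^{-\Omega(\eta^2C)}$ is impossible there), and the defect Cauchy--Schwarz chain itself collapses because the degenerate closed $4$-walks, of order the edge count $\approx \at C^2p^{-1}$, dwarf the genuine $C_4$-count, so the counting criterion certifies nothing about lower-regularity of the link. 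Two further gaps: your union-bound accounting is inconsistent --- a per-$z$ failure probability polynomial in $n$ cannot beat the $\exp\big(\Theta(|X|\log(en/|X|))\big)$ choices of $X,Y$ when these sets have linear size; one must instead bound the probability that a \emph{fixed set of $T$ vertices} is simultaneously bad, with per-vertex cost exponentially small in $p\min(|X|,|Y|)$, which Corollary~\ref{cor:TSI} provides and a Janson/Kim--Vu bound for a degree-4 polynomial does not. Finally, you correctly identify the adversarial choice of $G$ (made after all of $\Gamma$, including the edges at $z$, is revealed) as the crux, but the proposal never resolves it: the closing claim that regularity of $G$ and typicality of $\Gamma$ ``bound how many witnesses exist and make it unlikely that $z$ sees one'' is exactly the missing counting argument, i.e.\ the Gerke--Kohayakawa--R\"odl--Steger-style computation, not a consequence of the $C_4$ set-up.
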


Finally, we need a statement about random subpairs of regular pairs (which is used to prove Lemma~\ref{lem:TSRIL}).

\begin{corollary}[{\cite[Corollary 3.8]{GKRS}}]\label{cor:TSI}
    For any $d$, $\beta$, $\eps'>0$ there exist
    $\eps_0>0$ and $C$ such that for
    any $0<\eps<\eps_0$ and $0<p<1$, if $(X,Y)$ is an $(\eps,d,p)$-regular
    pair in a graph $G$, then the number of pairs $X'\subseteq X$ and
    $Y'\subseteq Y$ with $|X'|=w_1\ge C/p$ and $|Y'|=w_2\ge C/p$ such that
    $(X',Y')$ is an $(\eps',d,p)$-regular pair in~$G$ is at least
    $(1-\beta^{\min(w_1,w_2)})\binom{|X|}{w_1}\binom{|Y|}{w_2}$. \qed
\end{corollary}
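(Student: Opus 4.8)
The plan is to show that a uniformly random pair $(X',Y')$ with $X'\in\binom{X}{w_1}$ and $Y'\in\binom{Y}{w_2}$ fails to be $(\eps',d,p)$-regular with probability at most $\beta^{\min(w_1,w_2)}$. The first step is a standard reduction to witnesses of a fixed size: if $(X',Y')$ is not $(\eps',d,p)$-regular then there are $A\subseteq X'$ and $B\subseteq Y'$ with $|A|\geq\eps'w_1$, $|B|\geq\eps'w_2$ and $e_G(A,B)<(d-\eps')p|A||B|$, and by repeatedly deleting from $A$ a vertex of largest degree into $B$ (and symmetrically from $B$) one obtains such a pair with $|A|=a_0:=\lceil\eps'w_1\rceil$ and $|B|=b_0:=\lceil\eps'w_2\rceil$ exactly, whose $p$-density is still less than $d-\eps'$. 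Call a pair $(A,B)$ with $A\in\binom{X}{a_0}$, $B\in\binom{Y}{b_0}$ \emph{bad} if $e_G(A,B)\leq(d-\tfrac{\eps'}{2})pa_0b_0$ (the extra slack absorbs the reduction above after replacing $\eps'$ by a slightly smaller constant throughout). Writing $N_{\mathrm{bad}}$ for the number of bad pairs, using the identity $\binom{|X|-a_0}{w_1-a_0}/\binom{|X|}{w_1}=\binom{w_1}{a_0}/\binom{|X|}{a_0}$, and taking a union bound over the bad pair contained in an irregular $(X',Y')$, this reduces the corollary to an upper bound on the probability that a random small rectangle carries too few edges:
\[
  \Pr\big[(X',Y')\text{ not }(\eps',d,p)\text{-regular}\big]\ \leq\ \binom{w_1}{a_0}\binom{w_2}{b_0}\cdot\frac{N_{\mathrm{bad}}}{\binom{|X|}{a_0}\binom{|Y|}{b_0}}\ =\ \binom{w_1}{a_0}\binom{w_2}{b_0}\cdot\Pr_{A,B}\big[(A,B)\text{ bad}\big].
\]

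Using $\binom{w_1}{a_0}\leq(e/\eps')^{a_0}$ and $\binom{w_2}{b_0}\leq(e/\eps')^{b_0}$, the target estimate is $\Pr_{A,B}[(A,B)\text{ bad}]\leq\exp(-c\,p\,a_0b_0)$ for a constant $c=c(\eps',d)>0$. This suffices: since $w_1,w_2\geq C/p$ we have $pa_0\geq\eps'C$ and $pb_0\geq\eps'C$, so $p a_0 b_0=(pa_0)b_0\geq\eps'Cb_0$ and $pa_0b_0=(pb_0)a_0\geq\eps'Ca_0$, whence $pa_0b_0\geq\tfrac12\eps'C(a_0+b_0)$; taking $C$ large compared with $\eps',d,\beta$ makes $(e/\eps')^{a_0+b_0}\exp(-c\,pa_0b_0)\leq\exp(-\tfrac14c\eps'C(a_0+b_0))\leq\beta^{\min(w_1,w_2)}$, using $a_0+b_0\geq\eps'\max(w_1,w_2)\geq\eps'\min(w_1,w_2)$. (Slightly weaker estimates also work, as long as $\Pr_{A,B}[(A,B)\text{ bad}]$ is exponentially small in both $a_0$ and $b_0$ with a rate that can be enlarged by increasing $C$.)

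For the target estimate I would expose $B$ first and then $A$. By $(\eps,d,p)_G$-regularity of $(X,Y)$ we have $\sum_{y\in Y}\deg_G(y,X)=e_G(X,Y)\geq(d-\eps)p|X||Y|$, so $e_G(X,B)=\sum_{y\in B}\deg_G(y,X)$ over the random $b_0$-set $B$ has mean at least $(d-\eps)p|X|b_0$; call $B$ \emph{deficient} if $e_G(X,B)<(d-\tfrac{\eps'}{4})p|X|b_0$. A Chernoff-type bound for sampling without replacement (equivalently, negative association) bounds $\Pr_B[B\text{ deficient}]$, the relevant input being that the summands $\deg_G(y,X)$ have a common upper bound which is $o(b_0)$ in our regime — here one uses that in the intended application the pair lies in a regular partition of a subgraph of $\Gamma=G(n,p)$, so $\deg_G(y,X)\leq\deg_\Gamma(y,X)=O(\max(p|X|,\log n))$ while $b_0\geq\eps'C/p\gg\log n$; in a purely combinatorial statement one instead isolates the few low-degree vertices using Proposition~\ref{prop:neighbourhood} and its symmetric form. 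Conditioned on a non-deficient $B$, the average of $\deg_G(x,B)$ over $x\in X$ is at least $(d-\tfrac{\eps'}{4})pb_0$, so $e_G(A,B)=\sum_{x\in A}\deg_G(x,B)$ over the random $a_0$-set $A$ has mean at least $(d-\tfrac{\eps'}{4})pa_0b_0$ with summands again bounded by $\max_x\deg_G(x,B)=o(b_0)$, and a second application of the same concentration inequality places $e_G(A,B)$ below $(d-\tfrac{\eps'}{2})pa_0b_0$ with the required probability. Combining the two stages yields the estimate.

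The main obstacle is exactly this concentration step, and it is the only genuinely delicate part. The point is that $(\eps,d,p)_G$-regularity of $(X,Y)$ says nothing directly about a rectangle as small as $a_0\times b_0$ — which is typically far smaller than $\eps|X|\times\eps|Y|$ — so one cannot invoke regularity on $(A,B)$ itself; and since a lower-regular pair need not have any upper bound on individual vertex degrees, a crude bounded-differences martingale revealing the vertices one at a time has steps of size up to $a_0$ or $b_0$ and only yields an exponent of order $p\cdot\min(a_0,b_0)$, which is too weak to beat the $\binom{w_1}{a_0}\binom{w_2}{b_0}$ overhead when $a_0$ and $b_0$ have very different orders. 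Obtaining the full strength $\exp(-\Omega(pa_0b_0))$ therefore relies either on the upper degree bounds inherited from the ambient random graph, or on a careful separate treatment of the (few) atypical high-degree vertices together with a variance-sensitive concentration inequality; this is precisely the content of \cite[Corollary~3.8]{GKRS} (and of the main theorem of that line of work). The remaining points are routine bookkeeping: checking that the greedy size-reduction costs only a bounded amount of slack in $\eps'$, and that the density thresholds transfer cleanly between the scales $\eps'w_i$ and $a_0,b_0$.
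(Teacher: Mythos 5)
The paper does not prove this statement at all -- it is quoted verbatim from \cite{GKRS} -- so the only question is whether your self-contained argument is sound, and it is not: the central estimate you reduce to is false. You claim that for an $(\eps,d,p)$-regular (lower-regular) pair $(X,Y)$ and uniformly random $A\in\binom{X}{a_0}$, $B\in\binom{Y}{b_0}$ one has $\Pr[e_G(A,B)\le(d-\tfrac{\eps'}{2})pa_0b_0]\le\exp(-c\,pa_0b_0)$ with $c=c(\eps',d)$. Take $Y_0\subset Y$ of size $\tfrac{\eps^2}{2d}|Y|$ with \emph{no} edges to $X$, and let $(X,Y\setminus Y_0)$ be a perfectly behaved pair of $p$-density exactly $d$; then $(X,Y)$ is still $(\eps,d,p)$-regular, but with probability at least $\big(c_1\eps^2/\eps'\big)^{\Theta(\eps'b_0/d)}=\exp(-c'b_0)$ the random $B$ contains more than $\tfrac{\eps'}{d}b_0$ vertices of $Y_0$, and on that event $(A,B)$ is bad for almost every $A$. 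So the bad probability is at least $\exp(-c'b_0)$ with $c'$ depending on $\eps,\eps',d$ only: it is nowhere near $\exp(-\Omega(pa_0b_0))$, its rate cannot be enlarged by increasing $C$ (only by shrinking $\eps_0$), and it is not exponentially small in $a_0$ when $a_0\gg b_0$. Consequently the union bound over the $\binom{w_1}{a_0}\ge\exp(\Omega(a_0))$ witnesses on the larger side cannot be paid for, and your parenthetical fallback fails as well. The same obstruction hits your second stage from the other side: for a fixed non-deficient $B$, lower-regularity of $(X,Y)$ puts \emph{no} constraint on how the $e_G(X,B)$ edges are distributed over $X$ (all of them may go to a set $X_0$ of size $dp|X|$ complete to $B$, which is compatible with $(\eps,d,p)$-regularity whenever $b_0\ll\eps^2|Y|/d$), and then $\Pr_A[\text{bad}\mid B]$ is only $\exp(-\Theta(pa_0))$, again far too large for the $\binom{w_1}{a_0}$ overhead. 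This is precisely why the corollary's conclusion is $\beta^{\min(w_1,w_2)}$ and not better, and why one cannot prove it by union-bounding over witness pairs.

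Your proposed rescues do not close this gap. Appealing to degree bounds from an ambient $\Gamma=G(n,p)$ is not permitted: the statement is about an arbitrary graph $G$, and the paper uses it in exactly that generality (inside Lemma~\ref{lem:hypgeo}, applied to an abstract graph on $W$); moreover even in the random setting, degrees into a set of size only $C/p$ have constant mean and are not concentrated. The combinatorial substitute you mention, Proposition~\ref{prop:neighbourhood}, only controls degrees from $Y$ into the \emph{large} set $X$ and hence only the first (``deficient $B$'') stage -- which indeed can be handled -- but says nothing about the degrees from $X$ into the tiny set $B$, which is where the argument collapses. Finally, deferring the ``genuinely delicate part'' to ``precisely the content of \cite[Corollary~3.8]{GKRS}'' makes the write-up circular: that delicate part \emph{is} the theorem being proved, and its actual proof in \cite{GKRS} proceeds by a substantially more involved counting argument (handling witnesses on the unshrunk side without a union bound), which is why the present paper simply cites it.
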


\subsection{Concentration inequalities}

We close this section with two of Chernoff's bounds for random variables
that follow a binomial (Theorem~\ref{thm:chernoff}) and a hypergeometric
distribution (Theorem~\ref{thm:hypergeometric}), respectively, and the
following useful observation. Roughly speaking, it states that
a.a.s.~nearly all vertices in $G(n,p)$ have approximately the expected
number of neighbours within large enough subsets (for a proof see e.g.~\cite[Proposition~18]{ABET}).

\begin{proposition}
\label{prop:chernoff}
For each $\eps>0$ there exists a constant $C >0$ such that for every
$0<p<1$ asymptotically almost surely $\Gamma=G(n,p)$ has the 
property that for any sets $X,Y\subset V(\Gamma)$ with $|X|\ge
Cp^{-1}\log n$ and $|Y|\ge Cp^{-1}\log (en/|X|)$ the following holds.
\begin{enumerate}[label=\abc]
\item If~$X$ and~$Y$ are disjoint, then $e(X,Y)=(1\pm\eps)p|X||Y|$. 
\item We have $e(X)\le 2p|X|^2$. 
\item At most $C p^{-1} \log (en/|X|)$ vertices $v \in V(\Gamma)$ satisfy $\big||\NGa(v,X)| - p
  |X|\big| > \eps p |X|$.
\end{enumerate}\qed
\end{proposition}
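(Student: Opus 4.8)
The plan is a first-moment computation: for each of the three items we bound, for a fixed admissible pair $(X,Y)$ — equivalently a fixed $X$, since $Y$ enters neither (b) nor (c) — the probability that the statement fails, via a Chernoff bound, and then take a union bound over all choices. The arithmetic point throughout is that the permitted deviation $\log(en/|X|)$, and the hypothesis $|Y|\ge Cp^{-1}\log(en/|X|)$, are calibrated so that, once $C$ is large, the Chernoff exponent beats the logarithm of the number of relevant sets. Write $x=|X|$, $y=|Y|$; since $p\le 1$ and $x\ge Cp^{-1}\log n$ we always have $x\ge C\log n$ (and the hypotheses are vacuous unless $Cp^{-1}\log n\le n$), and we repeatedly use $\binom nx\le(en/x)^x=\exp(x\log(en/x))$ together with $\log(en/x)\le 2\log n$. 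For (a): $e(X,Y)$ is binomial with mean $pxy$, so $\Pr[\,|e(X,Y)-pxy|>\eps pxy\,]\le 2\exp(-c\eps^2pxy)$ for an absolute $c>0$; from $x\ge Cp^{-1}\log n$ and $y\ge Cp^{-1}\log(en/x)$ one gets $pxy\ge\tfrac12 C\bigl(x\log(en/x)+y\log n\bigr)$, so for $C$ large the exponent $c\eps^2pxy$ exceeds $x\log(en/x)+y\log(en/y)+x+y$, whence the size-$(x,y)$ contribution to the failure probability is at most $2\exp(-x-y)$, and summing over $x,y\in[n]$ gives $o(1)$. For (b): $e(X)$ is binomial with mean at most $\tfrac12 px^2$, so by the Chernoff upper tail $\Pr[e(X)>2px^2]\le\exp(-c'px^2)$, and $px^2\ge Cx\log n$ beats $\log\binom nx\le 2x\log n$ for $C$ large; summing over $x$ gives $o(1)$.

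For (c) the key move — and the step I expect to be the crux — is to bound the probability that an entire prospective set of bad vertices is bad in one shot, rather than to reason about individual vertices, whose deviation events fail to be independent when the vertices lie inside $X$. If more than $Cp^{-1}\log(en/x)$ vertices $v$ satisfy $\bigl|\,|\NGa(v,X)|-px\,\bigr|>\eps px$, then at least $\tfrac12 Cp^{-1}\log(en/x)$ of them deviate in the same direction; fix such a set $Z$ of size exactly $m:=\lceil\tfrac12 Cp^{-1}\log(en/x)\rceil$, say with $|\NGa(v,X)|>(1+\eps)px$ for all $v\in Z$. Then $\sum_{v\in Z}|\NGa(v,X)|>(1+\eps)pmx$. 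But $\sum_{v\in Z}|\NGa(v,X)|=\sum_{v\in Z,\,w\in X}\mathbf 1[vw\in E(\Gamma)]$ is a sum of independent random variables supported in $\{0,1,2\}$, one per edge of $\Gamma$ with an endpoint in $Z$ and an endpoint in $X$ (those inside $Z\cap X$ getting coefficient $2$), with mean $\mu=p\bigl(mx-|Z\cap X|\bigr)\in[\,pm(x-1),\,pmx\,]$. Since $\mu\le pmx$, the displayed inequality forces this bounded-increment sum to exceed its mean by a factor at least $1+\eps$, so by Chernoff $\Pr\le\exp(-c''\eps^2 pmx)$ for an absolute $c''>0$; the lower-tail version (using $\mu\ge pm(x-1)\ge\tfrac12 pmx$ and $x\ge C\log n$ to see that a downward-deviating $Z$ forces the sum below $(1-\tfrac{\eps}{2})\mu$) gives the same bound.

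To conclude, note $pmx\ge\tfrac12 Cx\log(en/x)$, while the number of pairs $(X,Z)$ with $|X|=x$ and $|Z|=m$ is at most $\binom nx\binom nm\le\exp\bigl(x\log(en/x)+2m\log n\bigr)$; as $2m\log n\le Cp^{-1}\log(en/x)\log n$ and $x\ge Cp^{-1}\log n$, for $C$ large the exponent $c''\eps^2 pmx$ exceeds $x\log(en/x)+2m\log n+x$, so the size-$x$ contribution to the failure probability of (c) is at most $\exp(-x)\le n^{-C}$, and summing over $x\in[n]$ gives $o(1)$. Taking the union of the three failure events over all admissible $(X,Y)$ then yields the a.a.s.\ statement. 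The only genuine difficulty is this bookkeeping — making each union bound converge — which is driven by the calibrated term $\log(en/|X|)$ and, for (c), by applying a single Chernoff estimate to $\sum_{v\in Z}|\NGa(v,X)|$ instead of to the individual neighbourhoods.
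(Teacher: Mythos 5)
Your proof is correct, and it follows essentially the same route as the source the paper points to: the paper gives no proof here, deferring to \cite[Proposition~18]{ABET}, which is exactly this Chernoff-plus-union-bound computation calibrated by the $\log(en/|X|)$ term, with part (c) handled, as you do, by aggregating a set of same-direction deviating vertices and applying one Chernoff bound to $\sum_{v\in Z}|\NGa(v,X)|$. The only cosmetic point is that the Chernoff inequality stated in the paper (Theorem~\ref{thm:chernoff}) is for sums of Bernoulli variables, so for your $\{0,1,2\}$-increment sum in (c) you should either invoke a bounded-increment version or split off the doubled $e(Z\cap X)$ contribution --- a routine fix that does not affect the argument.
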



We use the following version of Chernoff's Inequalities (see e.g.~\cite[Chapter~2]{janson2011random} for a proof).

\begin{theorem}[Chernoff's Inequality,~\cite{janson2011random}]
\label{thm:chernoff}
Let $X$ be a random variable which is the sum of independent Bernoulli random variables. Then we have for $\eps\leq 3/2$
\[\Pr\big[|X-\Ex[X]| > \eps \Ex[X]\big] < 2e^{-\eps^2\Ex[X]/3}\,.\]
Furthermore, if $t\ge 6\Ex[X]$ then we have
\[\Pr\big[X\ge\Ex[X]+t\big]\le e^{-t}\,.\] \qed
\end{theorem}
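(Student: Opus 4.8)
The plan is to run the standard Chernoff/Bernstein exponential-moment argument, keeping track of the explicit constants. Write $X=\sum_{i=1}^n X_i$ with the $X_i$ independent and $\Pr[X_i=1]=p_i$, and set $\mu:=\Ex[X]=\sum_i p_i$; the case $\mu=0$ is trivial (then $X\equiv 0$), so assume $\mu>0$. The first step is to bound the moment generating function: by independence and $1+x\le e^x$, for every real $\lambda$ one has $\Ex[e^{\lambda X}]=\prod_i\big(1+p_i(e^\lambda-1)\big)\le e^{\mu(e^\lambda-1)}$. Feeding this into Markov's inequality applied to $e^{\lambda X}$ (for the upper tail) and to $e^{-\lambda X}$ (for the lower tail) gives, for $\lambda>0$ and any real $a$, the two estimates $\Pr[X\ge a]\le\exp\big(\mu(e^\lambda-1)-\lambda a\big)$ and $\Pr[X\le a]\le\exp\big(\mu(e^{-\lambda}-1)+\lambda a\big)$.

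For the first (relative-deviation) inequality I would take $a=(1\pm\eps)\mu$ and optimise over $\lambda$. For the upper tail the choice $\lambda=\log(1+\eps)$ gives $\Pr[X\ge(1+\eps)\mu]\le\exp\big(\mu(\eps-(1+\eps)\log(1+\eps))\big)$, and I would then verify the elementary inequality $\eps-(1+\eps)\log(1+\eps)\le-\eps^2/3$ for $0\le\eps\le3/2$: the function $\eps\mapsto(1+\eps)\log(1+\eps)-\eps-\eps^2/3$ vanishes at $0$ and has derivative $\log(1+\eps)-\tfrac23\eps$, which is first positive then negative on $[0,\tfrac32]$, so the function stays nonnegative there. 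For the lower tail, $\Pr[X\le(1-\eps)\mu]=0$ when $\eps\ge1$, and for $0\le\eps<1$ the choice $\lambda=-\log(1-\eps)$ gives the bound $\exp\big(-\mu(\eps+(1-\eps)\log(1-\eps))\big)$, where the power-series identity $\eps+(1-\eps)\log(1-\eps)=\sum_{k\ge2}\tfrac{\eps^k}{k(k-1)}\ge\tfrac{\eps^2}{2}$ shows the exponent is at most $-\eps^2\mu/2\le-\eps^2\mu/3$. Adding the two tail probabilities yields the bound $2e^{-\eps^2\mu/3}$; the strict inequality I would recover from the fact that $X$ is integer-valued (so $\{X>(1+\eps)\mu\}=\{X\ge m\}$ for an integer $m$ that is strictly larger than $(1+\eps)\mu$ unless $(1+\eps)\mu$ is itself an integer, in which case the optimisation already left slack), or simply note there is nothing to prove once $2e^{-\eps^2\mu/3}\ge1$.

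For the second (large-deviation) inequality, given $t\ge6\mu$ I would apply the upper-tail estimate with $a=\mu+t$ and the optimiser $\lambda=\log(1+t/\mu)>0$, obtaining $\Pr[X\ge\mu+t]\le\exp\big(t-(\mu+t)\log(1+t/\mu)\big)$. Writing $u:=t/\mu\ge6$, it then remains to check $\log(1+u)\ge\tfrac{2u}{1+u}$; this holds at $u=6$ since $\log7>\tfrac{12}{7}$, and the difference $\log(1+u)-\tfrac{2u}{1+u}$ has derivative $\tfrac{u-1}{(1+u)^2}>0$ for $u\ge6$, so it stays nonnegative, giving $\Pr[X\ge\mu+t]\le e^{-t}$.

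There is no genuine obstacle here — the statement is classical, and the role of this step in the paper is purely to fix notation and the precise constants ($3$ in the exponent, the threshold $3/2$ for $\eps$, and the factor $6$ in the tail bound). The only points requiring a little care are the two scalar inequalities comparing $(1\pm\eps)\log(1\pm\eps)$ with quadratics on the stated ranges, together with the analogous comparison $\log(1+u)\ge\tfrac{2u}{1+u}$ in the large-deviation step; all three are dispatched by the routine calculus and power-series estimates indicated above.
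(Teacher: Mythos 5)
The paper does not prove this statement at all: it is quoted, with a \qed, from Janson, \L{}uczak and Ruci\'nski, with a pointer to Chapter~2 of that book for the proof. Your argument is exactly the standard exponential-moment proof given there (bound $\Ex[e^{\lambda X}]\le e^{\mu(e^\lambda-1)}$, apply Markov to $e^{\pm\lambda X}$, optimise $\lambda$, and reduce to scalar inequalities), and it is correct, so in substance you have reproduced the cited proof rather than found a different route. One small point to tighten: for the upper tail you argue that $f(\eps)=(1+\eps)\log(1+\eps)-\eps-\eps^2/3$ vanishes at $0$ and has derivative that is first positive then negative on $[0,\tfrac32]$, and conclude $f\ge 0$ there; that reasoning only shows $f$ increases and then decreases, so you must also check the right endpoint, $f(\tfrac32)=\tfrac52\log\tfrac52-\tfrac94>0$ (the inequality genuinely fails for somewhat larger $\eps$, so this check is not vacuous). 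The lower-tail series estimate, the union bound, and the large-deviation step with $u=t/\mu\ge 6$ and $\log(1+u)\ge\tfrac{2u}{1+u}$ are all fine; strictness of the first inequality comes most cleanly from the fact that the lower tail actually satisfies the stronger bound $e^{-\eps^2\mu/2}$, so the sum of the two tails is strictly below $2e^{-\eps^2\mu/3}$ whenever $\eps,\mu>0$, with the degenerate cases trivial.
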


Finally, let $N$, $m$, and $s$ be positive integers and let $S$ and $S' \subseteq S$ be two sets with $|S| = N$ and $|S'| = m$. The \emph{hypergeometric distribution} is the distribution of the random variable $X$ that is defined by drawing $s$ elements of $S$ without replacement and counting how many of them belong to $S'$. It can be shown that Theorem~\ref{thm:chernoff} still holds in the case of hypergeometric distributions (see e.g.~\cite{janson2011random}, Chapter~2 for a proof) with $\Ex[X]= ms/N$.

\begin{theorem}[Hypergeometric inequality,~\cite{janson2011random}]
\label{thm:hypergeometric}
Let $X$ be a random variable is hypergeometrically distributed with parameters $N$, $m$, and $s$. Then for any $\eps>0$ and $t\ge\eps ms/N$ we have
\[\Pr\big[|X - ms/N| > t \big] < 2e^{-\eps^2t/3}\,.\] \qed
\end{theorem}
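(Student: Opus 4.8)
The plan is to reduce the hypergeometric case to the binomial case already covered by Theorem~\ref{thm:chernoff}. Realise the experiment as drawing elements $\xi_1,\dots,\xi_s$ of $S$ one by one without replacement, and set $Z_i=\mathbf 1[\xi_i\in S']$, so that $X=\sum_{i=1}^s Z_i$. By symmetry each $Z_i$ is a Bernoulli variable with parameter $m/N$, so $\Ex[X]=sm/N=:\mu$, which is the asserted mean; the catch is that the $Z_i$ are \emph{not} independent, so Theorem~\ref{thm:chernoff} cannot be invoked directly. Note that a crude bounded-differences (Azuma) estimate is \emph{not} good enough here, since it would give an exponent of order $t^2/s$ and miss the extra concentration coming from a small ratio $m/N$; one really needs a Bernstein-type bound with $\mu$ in the denominator of the exponent.

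The one genuinely non-routine input is the classical comparison, due to Hoeffding, that sampling without replacement is at least as concentrated as sampling with replacement: if $Y$ is binomially distributed with parameters $s$ and $m/N$, then $\Ex[f(X)]\le\Ex[f(Y)]$ for every convex function $f$, and in particular $\Ex[e^{\lambda X}]\le\Ex[e^{\lambda Y}]$ for every $\lambda\in\mathbb R$. I would prove this by exposing the draws sequentially: conditioned on $\xi_1,\dots,\xi_{i-1}$, the indicator $Z_i$ is Bernoulli with parameter equal to the fraction of still-available elements lying in $S'$, and a convexity/martingale argument lets one replace, step by step, the true conditional laws by the product (with-replacement) laws while never decreasing the expectation of $e^{\lambda(\cdot)}$. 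This is the step where the without-replacement structure has to be controlled, and it is the main obstacle; everything else is routine.

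Granting the moment-generating-function comparison $\Ex[e^{\lambda X}]\le\Ex[e^{\lambda Y}]$, the Chernoff estimates for $Y$ transfer verbatim to $X$. For the upper tail, Markov's inequality applied to $e^{\lambda X}$ gives $\Pr[X\ge\mu+t]\le e^{-\lambda(\mu+t)}\Ex[e^{\lambda X}]\le e^{-\lambda(\mu+t)}\Ex[e^{\lambda Y}]$, and optimising over $\lambda>0$ yields precisely the bound one obtains for $Y$; the lower tail follows symmetrically by applying the comparison to $-X$ and $-Y$. Carrying the constants through exactly as in the proof of Theorem~\ref{thm:chernoff} — setting $\delta=t/\mu$, so that the resulting bound reads $2e^{-\delta^2\mu/3}=2e^{-\delta t/3}$, and using $\delta=t/\mu\ge\eps$ together with $\eps^2\le\eps$ in the relevant range — produces the claimed inequality $\Pr\big[|X-\mu|>t\big]<2e^{-\eps^2 t/3}$. (For the remaining large-deviation range one argues identically from the second bound of Theorem~\ref{thm:chernoff}.)
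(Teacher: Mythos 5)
Your core approach is the same as the one behind the paper's proof: the paper gives no argument of its own for Theorem~\ref{thm:hypergeometric} but defers to~\cite{janson2011random}, Chapter~2, where the hypergeometric case is handled exactly as you propose, by Hoeffding's comparison that sampling without replacement is dominated in the convex order by sampling with replacement, so that $\Ex[e^{\lambda X}]\le\Ex[e^{\lambda Y}]$ for the corresponding binomial $Y$ and every Chernoff/Bernstein bound transfers. Your identification of this as the one non-routine ingredient is right; note, though, that your step-by-step replacement sketch of that comparison is itself the delicate part (the cited book simply invokes Hoeffding's theorem rather than reproving it), so as written it is an appeal to a classical result rather than a proof of it, which is fine and matches the source.

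The genuine soft spot is the endgame. Theorem~\ref{thm:chernoff} as stated covers relative deviations $\eps\le 3/2$ and the far tail $t\ge 6\Ex[X]$ only, so the window $\tfrac32\mu<t<6\mu$ (with $\mu=ms/N$) is not covered by either bound, and your remark that the remaining range ``follows identically from the second bound'' does not apply there. The clean route, and the one in~\cite{janson2011random}, is to transfer the underlying bounds $\Pr[X\ge\mu+t]\le\exp\big(-\tfrac{t^2}{2(\mu+t/3)}\big)$ and $\Pr[X\le\mu-t]\le\exp\big(-\tfrac{t^2}{2\mu}\big)$; for $t\ge\eps\mu$ these give $2e^{-\eps^2t/3}$ uniformly in $t$ provided $2\eps+\tfrac23\eps^2\le 3$, in particular for all $\eps\le1$. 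Relatedly, your ``$\eps^2\le\eps$ in the relevant range'' silently assumes $\eps\le1$, and some such restriction is in fact unavoidable: for fixed $\eps$ already around $1.2$, in the regime $m/N\to0$, $ms/N\to\infty$ with $s$ fixed and $N$ large, the true upper-tail exponent at $t=\eps\mu$ is $\big((1+\eps)\ln(1+\eps)-\eps\big)\mu$, which is smaller than the claimed $\eps^2t/3=\eps^3\mu/3$, so the inequality cannot hold verbatim for all $\eps>0$. Since the paper only ever applies the theorem with small $\eps$, proving it for $\eps\le1$ (as your argument effectively does, once the middle range of $t$ is handled via the Bernstein-form bounds) is the right target; just make the restriction and the missing range explicit.
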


We require the following technical lemma, which is a consequence of the hypergeometric inequality stated in Theorem~\ref{thm:hypergeometric}. 

\begin{lemma}\label{lem:hypgeo}
 For each $\eps^+_0,d^+>0$ there exists $\eps^+>0$, and for each
 $\eps,d>0$ there exists $\eps^->0,$ such that for each
 $\eta>0$ and $\Delta$ there exists $C$ such that the following holds for
 each $p>0$. 

Let $W\subset [n]$, let $t\le 100n^{\Delta+1}$, and let
 $T_1,\ldots,T_t$ be subsets of $W$. Let~$G$ be a graph on~$W$.
 For each $i\in[t]$ let $(X_i,Y_i)$ be a pair
 which is either $(\eps^+,d^+,p)_G$-regular, or
 $(\eps^-,d,p)_G$-regular (respectively), and which satisfies
$m|X_i|/|W|,m|Y_i|/|W|\ge 2Cp^{-1}\log n$.

For each $m\le |W|$ there is a set $S\subset W$ of size $m$ such that for each $i\in[t]$
 \[|T_i\cap S|=\tfrac{m}{|W|}|T_i|\pm \big(\eta|T_i|+C\log n\big)\,,\]
and the pair $\big(X_i\cap S,Y_i\cap S\big)$
 is $\big(\eps^+_0,d^+,p\big)$-regular, or
 $(\eps,d,p)$-regular (respectively).
\end{lemma}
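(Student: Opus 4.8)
The plan is to obtain $S$ by a random sampling argument and then union-bound over the $t$ events we need to control. First I would fix the constants: given $\eps^+_0, d^+$, let $\eps^+>0$ be the value supplied by Corollary~\ref{cor:TSI} applied with $d=d^+$, $\eps'=\eps^+_0$ and $\beta=\tfrac12$ (say); similarly given $\eps,d$ let $\eps^-$ be the value from Corollary~\ref{cor:TSI} with those parameters. Then, given $\eta$ and $\Delta$, choose $C$ large enough to absorb the various Chernoff and hypergeometric error terms below (in particular $C$ must beat the $\log(100n^{\Delta+1})=\bigO(\Delta\log n)$ loss from the union bound, which is why $C$ is allowed to depend on $\Delta$). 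Now pick $S\subset W$ uniformly at random among all $m$-subsets of $W$.

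The core of the argument is a union bound over at most $3t\le 300n^{\Delta+1}$ bad events, one triple for each $i\in[t]$. For the intersection estimate: $|T_i\cap S|$ is hypergeometrically distributed with parameters $|W|$, $|T_i|$, $m$, so by Theorem~\ref{thm:hypergeometric}, applied with $t$ there equal to $\max\{\eta|T_i|, C\log n\}$ and with an appropriate small $\eps$, the probability that $\big||T_i\cap S|-\tfrac m{|W|}|T_i|\big|>\eta|T_i|+C\log n$ is at most $2\exp(-\Omega(C\log n))$, which is $\littleO(n^{-\Delta-1})$ once $C$ is large. (When $|T_i|$ is small relative to $C\log n$ the deviation term $C\log n$ dominates and the same bound holds; when it is large the $\eta|T_i|$ term gives the relative deviation $\eps=\eta$ needed in Theorem~\ref{thm:hypergeometric}.) For the regularity of $(X_i\cap S, Y_i\cap S)$: the set $X_i\cap S$ is a uniformly random $w_1$-subset of $X_i$ conditioned on $|S\cap X_i|=w_1$, and likewise for $Y_i$, with $w_1,w_2\ge Cp^{-1}\log n - \littleO(\cdot)\gg C'/p$ by the size hypothesis $m|X_i|/|W|, m|Y_i|/|W|\ge 2Cp^{-1}\log n$ and a first moment/Chernoff bound on $|S\cap X_i|$ and $|S\cap Y_i|$. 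Hence Corollary~\ref{cor:TSI} tells us that, conditionally on these sizes, the probability that $(X_i\cap S, Y_i\cap S)$ fails to be $(\eps^+_0,d^+,p)$-regular (resp.\ $(\eps,d,p)$-regular) is at most $\beta^{\min(w_1,w_2)}\le 2^{-C'p^{-1}\log n}=\littleO(n^{-\Delta-1})$, uniformly over the choice of sizes. Summing these $\bigO(n^{\Delta+1})$ failure probabilities, each $\littleO(n^{-\Delta-1})$, gives total failure probability $\littleO(1)<1$, so a good $S$ exists.

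The one point requiring a little care, and the main obstacle, is the bookkeeping around the sizes $w_1=|S\cap X_i|$ and $w_2=|S\cap Y_i|$: Corollary~\ref{cor:TSI} is stated for fixed subset sizes, whereas under our sampling these are themselves random. The clean way to handle this is to first run a Chernoff (hypergeometric) bound showing that for each $i$ simultaneously $|S\cap X_i|=(1\pm o(1))\tfrac m{|W|}|X_i|\ge C'/p$ and likewise for $Y_i$ — this is another $\bigO(n^{\Delta+1})$ events each failing with probability $\littleO(n^{-\Delta-1})$ — and then, on that likely event, to apply Corollary~\ref{cor:TSI} conditionally: given the realized value of $S\cap(W\setminus(X_i\cup Y_i))$ and the sizes $w_1,w_2$, the pair $(S\cap X_i, S\cap Y_i)$ is a uniformly random pair of subsets of the stated sizes, and the corollary bounds the conditional failure probability by $\beta^{\min(w_1,w_2)}$ regardless of the conditioning. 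Averaging over the conditioning keeps the bound. Everything else is routine: choose the $\eps$ parameters in the Chernoff/hypergeometric applications small enough (e.g.\ $\eps=\min\{\eta,1/10\}$) and $C$ large enough in terms of all earlier constants and $\Delta$.
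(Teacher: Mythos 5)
Your proposal is correct and follows essentially the same route as the paper: choose $S$ uniformly at random, apply the hypergeometric inequality (Theorem~\ref{thm:hypergeometric}) to each $|T_i\cap S|$ and to the sizes $|X_i\cap S|,|Y_i\cap S|$, then apply Corollary~\ref{cor:TSI} conditionally on those sizes (under which $X_i\cap S$ and $Y_i\cap S$ are uniform subsets of the given sizes), and finish with a union bound over the at most $100n^{\Delta+1}$ indices, with $C$ chosen large in terms of $\eta$ and $\Delta$. Your extra care about the randomness of $w_1,w_2$ is exactly the conditioning step the paper also performs, so there is nothing to add.
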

\begin{proof}
 Given $\eps^+_0,d^+$, let $\eps^+$ be returned by Corollary~\ref{cor:TSI}
 for input $d^+$, $\beta=\tfrac12$ and $\eps^+_0$. 
 Given $\eps,d$, let $\eps^-$ be returned by Corollary~\ref{cor:TSI} for input $d$, $\beta=\tfrac12$ and $\eps$.
 Let $C\ge 30\eta^{-2}\Delta$ be large enough for these applications of Corollary~\ref{cor:TSI}.
 
 Observe that for each $i$, the size of $T_i\cap S$ is hypergeometrically distributed.
 By Theorem~\ref{thm:hypergeometric}, for each $i$ we have
 \[\Pr\big[|T_i\cap S|\neq \tfrac{m}{|W|}|T_i|\pm \big(\eta|T_i|+C\log n\big)\big]<2e^{-\eta^2C\log n/3}<\frac{2}{n^{2+\Delta}}\,,\]
 so taking the union bound over all $i\in[t]$ we conclude that the probability of failure is at most $2t/n^{2+\Delta}\le 200/n\to 0$ as $n\to\infty$, as desired.
 
 To obtain the second property, observe that 
 Theorem~\ref{thm:hypergeometric} also implies that we have $|X_i\cap
 S|,|Y_i\cap S|\ge Cp^{-1}\log n$ for each $i\in[t]$ with probability
 tending to one as $n\to\infty$. Conditioning on the size of $|X_i\cap S|$,
 the set $X_i\cap S$ is a uniformly distributed subset of $X_i$ of size
 $|X_i\cap S|$, and the same applies to $Y_i\cap S$. Now
 Corollary~\ref{cor:TSI} says that, conditioning on $|X_i\cap S|,|Y_i\cap
 S|\ge Cp^{-1}\log n$, the probability that $\big(X_i\cap S,Y_i\cap S\big)$
 fails to have the desired regularity in $G$ is at most $2^{-Cp^{-1}\log n}$, and taking a union bound over the choices of~$i$ the result follows.
\end{proof}




\section{Main technical result and main lemmas}
\label{sec:mainlemmas}

We deduce Theorem~\ref{thm:main} from the following technical result
(corresponding results also appear in the predecessor papers~\cite{ABET,bottcher2009proof}). This result is more general in that it allows for an extra colour, \emph{zero}, in the colouring of $H$, provided that this colour does not appear too often.

\begin{definition}[Zero-free colouring]\label{def:zerofree}
 Let $H$ be a $(k+1)$-colourable graph on $n$ vertices and let $\cL$ be a labelling of its vertex set of bandwidth at most $\beta n$. A proper $(k+1)$- colouring $\sigma:V(H) \to \{0,\ldots,k\}$ of its vertex set is said to be \emph{$(z,\beta)$-zero-free} with respect to $\cL$ if any $z$ consecutive blocks contain at most one block with colour zero, where a block is defined as a set of the form $\{(t-1)4k\beta n +1, \ldots, t4k\beta n\}$ with $t \in [1/(4k\beta)]$.
\end{definition}

\begin{theorem}[Main technical result]
\label{thm:maink}
For each $\gamma>0$, $\Delta \geq 2$, $k\geq 2$ and $1\le s\le k-1$, there exist constants $\beta >0$, $z>0$, and $C>0$ such that the following holds asymptotically almost surely for $\Gamma = G(n,p)$ if $p\geq C\big(\frac{\log n}{n}\big)^{1/\Delta}$. Let $G$ be a spanning subgraph of $\Gamma$ with $\delta(G) \geq\big(\frac{k-1}{k}+\gamma\big)pn$   such that for each $v\in V(G)$ there are at least $\gamma p^{\binom{s}{2}}(pn)^s$ copies of $K_s$ in $N_G(v)$ and let $H$ be a graph on $n$ vertices with $\Delta(H) \leq \Delta$ that has a labelling $\cL$ of its vertex set of bandwidth at most $\beta n$, a $(k+1)$-colouring that is $(z,\beta)$-zero-free with respect to $\cL$ and where the first $\sqrt{\beta} n$ vertices in $\cL$ are not given colour zero and the first $\beta n$ vertices in $\cL$ include $Cp^{-2}$ vertices whose neighbourhood contains only $s$ colours. Then $G$ contains a copy of $H$.
\end{theorem}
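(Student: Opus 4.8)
# Proof Proposal for Theorem~\ref{thm:maink}

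The plan is to follow the by-now standard architecture for proving bandwidth-type embedding results via the regularity–blow-up method, adapting the scheme of~\cite{ABET} to the new hypothesis that vertex neighbourhoods are rich in copies of $K_s$ rather than merely containing many non-edge vertices. First I would apply the minimum degree version of the sparse regularity lemma (Lemma~\ref{lem:regularitylemma}) to $G$; the upper-uniformity hypothesis needed there holds a.a.s.\ in $\Gamma=G(n,p)$ by Proposition~\ref{prop:chernoff}(a). This yields an $(\eps,p)_G$-regular equipartition with reduced graph $R$ of bounded size and with $\delta(R)\ge(\tfrac{k-1}{k}+\tfrac{\gamma}{2})|V(R)|$. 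By a Hajnal--Szemer\'edi-type argument (exactly as in~\cite{bottcher2009proof,ABET}) one extracts from $R$ a spanning collection of vertex-disjoint copies of $K_{k+1}$, or more precisely a \emph{backbone} structure: a spanning subgraph $R'\subseteq R$ consisting of a chain of cliques $K_{k}$ (one extra slot for the zero colour) together with the connectivity needed to route the bandwidth-$\beta n$ host graph $H$ along it. The zero-free condition on the colouring of $H$ is precisely what allows the image of $H$ to be routed along this backbone while respecting the partition; this part is lifted essentially verbatim from~\cite{ABET}.

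The heart of the argument, and the step that genuinely differs from~\cite{ABET}, is the pre-embedding: one must pre-embed the $Cp^{-2}$ special vertices of $H$ (those whose neighbourhood uses only $s$ colours) together with a small neighbourhood-closure of each, in such a way that their image restrictions can later be honoured by the blow-up lemma. This is where I would invoke the new pre-embedding lemma (Lemma~\ref{lem:coverv}, to be proved in Section~\ref{sec:pel}), whose job is to use the hypothesis that every $N_G(v)$ contains $\ge\gamma p^{\binom{s}{2}}(pn)^s$ copies of $K_s$. Concretely, for each special vertex $x$ of $H$ with $\deg_H(x)$ colours among $\{1,\dots,s\}$, the pre-embedding should place $x$ at some $\phi(x)=v$ and then set up restricting-vertex sets $J_y$ for the neighbours $y$ of $x$ so that $I_y$ sits inside $N_\Gamma(v)\cap(\text{appropriate cluster})$; the $K_s$-richness guarantees that enough vertices $v$ admit such a configuration and that the resulting image restrictions are of size $\zeta(dp)^{|J_y|}|V_i|$ as required by~\ref{itm:restrict:sizeIx}. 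One then refines the partition (using Lemma~\ref{lem:SRLb} and Lemma~\ref{lem:hypgeo}) to absorb these restrictions into a new regular partition, re-establishing super-regularity on $R'$ and re-balancing cluster sizes so that the partitions of $G$ and $H$ become size-compatible.

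Having arranged the partition, the remaining work is to verify the hypotheses of the sparse blow-up lemma (Lemma~\ref{thm:blowup}). The buffer sets $\tcW$ are taken to be the bulk of each cluster's preimage away from the special vertices, and property (ii) of Definition~\ref{def:buffer} is ensured by the routing of $H$ along $R'$. One-sided and two-sided regularity inheritance on $R'$ are established by first passing to a slightly sub-sampled partition and applying Lemma~\ref{lem:OSRIL} and Lemma~\ref{lem:TSRIL}, discarding the few bad vertices into the exceptional set; here the condition $p\ge C(\log n/n)^{1/\Delta}$ is used to make the error terms $Cp^{-1}\log(en/|X|)$ and $C\max\{p^{-2},p^{-1}\log(en/|X|)\}$ negligible compared with cluster sizes. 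Finally the restriction pair $(\cI,\cJ)$ built during the pre-embedding must be checked to satisfy~\ref{itm:restrict:numres}--\ref{itm:restrict:Ireg}: \ref{itm:restrict:numres} because only $Cp^{-2}=o(n/r)$ vertices per cluster are restricted, \ref{itm:restrict:DJ} by choosing the $\phi(x)$ greedily to avoid overuse, and \ref{itm:restrict:Ireg} via another application of regularity inheritance to the restricted pairs. With all hypotheses met, Lemma~\ref{thm:blowup} produces the desired embedding $\phi\colon V(H)\to V(G)$.

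The main obstacle, I expect, is the pre-embedding lemma itself. The difficulty is that the $K_s$-richness hypothesis on $G$ is \emph{local} and gives control only of the \emph{number} of $K_s$'s in each neighbourhood, not of how they distribute across the clusters of the regular partition; one must show that for a typical vertex $v$ the copies of $K_s$ in $N_G(v)$ are themselves spread out enough to land (after a counting/regularity argument) in the $s$ clusters demanded by the local colouring of the special vertex of $H$ being embedded there, and that this can be done simultaneously for all $Cp^{-2}$ special vertices without conflicts. Getting the quantitative dependence of the constants right — so that the $\zeta(dp)^{|J_x|}|V_i|$ lower bound survives — and ensuring that the counterexample of Section~\ref{sec:construct} is genuinely excluded (i.e.\ that it is the \emph{coloured} local condition, tied to the global $k$-colouring, rather than mere $s$-colourability of $N_H(x)$, that one exploits) is the delicate part that will require the new ideas advertised in the introduction.
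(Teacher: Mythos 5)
Your high-level architecture (Lemma for $G$/Lemma for $H$, a pre-embedding stage via Lemma~\ref{lem:coverv}, balancing, then the sparse blow-up lemma) is the same as the paper's, but as a deduction of Theorem~\ref{thm:maink} from the stated lemmas your proposal has a genuine gap: you never verify the hypotheses of Lemma~\ref{lem:coverv} at the points where you apply it, and this is exactly where the new work lies. The lemma requires (its condition on $v$) that the \emph{restricted} neighbourhood $N_{G'}(v)$ contains at least $\gamma p^{\binom{s+1}{2}}(\mu n)^s$ copies of $K_s$, where $G'$ lives inside a small slice $S$ of size $\mu n$ from which the images of earlier pre-embeddings have been deleted; the theorem's hypothesis only gives many copies of $K_s$ in $N_G(v)$ inside the whole graph, and a local count does not automatically survive restriction to a sparse slice with adversarially-located deletions. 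The paper handles this by first building, for \emph{every} $v\in V(G)$, a robust witness: an application of Lemma~\ref{lem:SRLb} to $G[N_G(v)]$ producing $s$ sets $Q_{v,1},\dots,Q_{v,s}\subset N_G(v)$ of size $\Omega(pn)$ that are pairwise dense and regular; then choosing $S$ via Lemma~\ref{lem:hypgeo} so that all these pairs remain regular and proportionally large inside $S$; and finally ordering the vertices of $V_0$ in the pre-embedding algorithm (always covering next the vertex with most already-used neighbours) so that earlier embeddings cannot eat a constant fraction of any $Q_{v,i}\cap S$ — this ordering also disposes of the $\Delta=2$, $Cp^{-2}>pn$ issue. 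Your last paragraph correctly senses this distribution problem but mis-locates it inside the proof of Lemma~\ref{lem:coverv}; taking that lemma as a black box, the burden falls on the proof of Theorem~\ref{thm:maink}, and your outline does not discharge it.

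Two further steps of the deduction are missing. First, Lemma~\ref{lem:coverv} assigns the boundary vertices $T$ of each pre-embedded piece to the clusters of a clique $\kq_1,\dots,\kq_k$ of $R$ that it chooses itself, which in general differs from the assignment $f$ produced by Lemma~\ref{lem:H2} (which maps the first $\sqrt{\beta}n$ vertices to the fixed clique $(1,1),\dots,(1,k)$); one must therefore re-route the homomorphism, and the paper does this by constructing a walk of $\binom{k+1}{2}$ cliques in $R^k_r$ connecting the two, using the minimum degree of $R^k_r$. Your proposal treats the pre-embedding as if it directly produces restrictions compatible with the existing partition of $H$, so this reassignment (and the verification that it changes $f$ on only $O(p^{-2})$ vertices near the start of $\cL$) is absent. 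Second, the candidate image restriction sets that Lemma~\ref{lem:coverv} yields, namely property~\ref{item:pel-minsize}, are subsets of $S$ and hence smaller by a factor $\mu$ than the bound $\zeta(dp)^{|J_x|}|V_i|$ demanded by~\ref{itm:restrict:sizeIx}; the paper upgrades them to restrictions in the full clusters by exploiting that $S$ was chosen via Lemma~\ref{lem:hypgeo}, so that $|N_G(J_y)\cap V_{f^{**}(y)}\cap S|$ being large forces $|N_G(J_y)\cap V_{f^{**}(y)}|$ to be large. Your claim that the restrictions have the required size ``as required by~\ref{itm:restrict:sizeIx}'' glosses over exactly this point, and without it the application of Lemma~\ref{thm:blowup} does not go through.
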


The basic proof strategy for this theorem is analogous to the proof strategy 
for~\cite[Theorem~23]{ABET}. Eventually, we will apply the sparse blow-up lemma,
Lemma~\ref{thm:blowup}, to embed most of~$H$ into $G$, and we need to obtain the
necessary conditions for this lemma. The difficulty is that, whatever
regular partition of $G$ we take, there may be some exceptional vertices
which are `badly behaved' with respect to this partition. Our first main
lemma, the following Lemma for~$G$, states that there is a partition with
only few such vertices, which we collect in a set~$V_0$. These vertices will be dealt with in a
pre-embedding stage before the application of the sparse blow-up lemma.

For the application of the sparse blow-up lemma the following two graphs
$B^k_r$ and $K^k_r$,
which we shall find as subgraphs of the reduced graph of~$G$, are essential.
Let $r, k \geq 1$ and let $B^k_r$ be the \emph{backbone graph} on $kr$ vertices. That is, we have
\[V(B^k_r) := [r] \times [k]\] and for every $j \neq j' \in [k]$ we have $\{(i,j),(i',j')\} \in E(B^k_r)$ if and only if $|i-i'|\le1$.
Let $K^k_r \subseteq B^k_r$ be the spanning subgraph of $B^k_r$ that is the disjoint union of $r$ complete graphs on $k$ vertices given by the following components: the complete graph $K^k_r[\{(i,1),\ldots, (i,k)\}]$ is called the \emph{$i$-th component} of $K^k_r$ for each $i\in [r]$. 

A vertex partition $\cV' = \{V_{i,j}\}_{i\in[r],j\in[k]}$ is called \emph{$k$-equitable} if $\big||V_{i,j}| -|V_{i,j'}|\big|\leq 1$ for every $i\in [r]$ and $j,j'\in[k]$. Similarly, an integer partition $\{n_{i,j}\}_{i\in[r],j\in[k]}$ of $n$ (meaning that $n_{i,j} \in \mathbb Z_{\geq 0}$ for every $i\in [r],j\in[k]$ and $\sum_{i\in[r]j\in[k]} n_{i,j} = n$) is \emph{$k$-equitable} if $|n_{i,j}-n_{i,j'}| \leq 1$ for every $i\in[r]$ and $j,j'\in[k]$. 

The Lemma for~$G$ then guarantees a $k$-equitable partition for~$G$ whose
reduced graph~$R^k_r$ contains a copy of the backbone graph~$B^k_r$, is
super-regular on $K^k_r\subset B^k_r$, and satisfies certain regularity
inheritance properties.

\begin{lemma}[Lemma for $G$,~{\cite[Lemma~24]{ABET}}] 
\label{lem:G}
For each $\gamma > 0$ and integers $k \geq 2$ and $r_0 \geq 1$ there exists $d > 0$ such that for every $\eps \in \left(0, \frac{1}{2k}\right)$ there exist $r_1\geq 1$ and $\Ca>0$ such that the following holds a.a.s.~for $\Gamma = G(n,p)$ if $p \geq \Ca \left(\log n/n\right)^{1/2}$. Let $G=(V,E)$ be a spanning subgraph of $\Gamma$ with $\delta(G) \geq  \left(\frac{k-1}{k} + \gamma\right)pn$. Then there exists an integer $r$ with $r_0\leq kr \leq r_1$, a subset $V_0 \subseteq V$  with $|V_0| \leq \Ca p^{-2}$, 
a $k$-equitable vertex partition $\cV = \{\Vij\}_{i\in[r],j\in[k]}$ of $V(G)\setminus V_0$,
and a graph $R^k_r$ on the vertex set $[r] \times [k]$ with $K^k_r \subseteq B^k_r \subseteq R^k_r$, with $\delta(R^k_r) \geq \left(\frac{k-1}{k} + \frac{\gamma}{2}\right)kr$, and such that the following is true.
\begin{enumerate}[label=\itmarab{G}]
\item \label{lemG:size} $\frac{n}{4kr}\leq |\Vij| \leq \frac{4n}{kr}$ for every $i\in[r]$ and $j\in[k]$,
\item \label{lemG:regular} $\cV$ is $(\eps,d,p)_G$-regular on $R^k_r$ and $(\eps,d,p)_G$-super-regular on $K^k_r$,
\item \label{lemG:inheritance} both $\big(\NGa(v, V_{i,j}),V_{i',j'}\big)$ and $\big(\NGa(v', V_{i,j}),\NGa(v, V_{i',j'})\big)$ are $(\eps,d,p)_G$-regular pairs for every $\{(i,j),(i',j')\} \in E(R^k_r)$ and $v\in V\setminus V_0$,
\item \label{lemG:gamma} $|\NGa(v,V_{i,j})| = (1 \pm \eps)p|V_{i,j}|$ for every $i \in [r]$, $j\in [k]$ and every $v \in V \setminus V_0$.
\end{enumerate}
\end{lemma}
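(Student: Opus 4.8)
The plan is to prove Lemma~\ref{lem:G} by applying the minimum degree version of the sparse regularity lemma (Lemma~\ref{lem:regularitylemma}) to $G$, and then doing post-processing to massage the resulting partition into the required form. First I would check that $G\subseteq\Gamma=G(n,p)$ satisfies the upper-uniformity hypothesis of Lemma~\ref{lem:regularitylemma} with room to spare: since $p\ge\Ca(\log n/n)^{1/2}$, any two disjoint sets $X,Y$ of size at least $\eps n/r_1$ have $|X|,|Y|\ge Cp^{-1}\log n$ for the constant from Proposition~\ref{prop:chernoff} (choosing $\Ca$ large), so a.a.s.\ $e(X,Y)=(1\pm\eps^2/1000)p|X||Y|$; thus Lemma~\ref{lem:regularitylemma} applies with $\alpha=\frac{k-1}{k}+\gamma$ and gives an $(\eps',p)_G$-regular equipartition with reduced graph of minimum degree at least $\big(\frac{k-1}{k}+\gamma-d-\eps'\big)|V(R)|$ for a suitably small auxiliary $\eps'$ and $d$, and $r_0\le|V(R)|\le r_1$. (I would in fact apply it with a smaller error parameter and then discard a few clusters to restore $k|$ divisibility of the cluster count, absorbing the discarded clusters into the exceptional set.)

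Next I would build the backbone structure inside the reduced graph. The reduced graph $R$ has minimum degree just above $\frac{k-1}{k}|V(R)|$, so by a standard application of the Hajnal--Szemer\'edi theorem (or the structural lemma from~\cite{bottcher2009proof} that produces exactly the backbone) one finds a spanning subgraph isomorphic to $B^k_r$ together with a copy of $K^k_r$ inside it. Here I would re-index the clusters so that cluster $(i,j)$ plays the role of the corresponding vertex of $B^k_r$; after this relabelling the original regular partition is automatically $(\eps',d,p)_G$-regular on $R^k_r:=R$ restricted to the backbone-containing graph. The minimum degree $\delta(R^k_r)\ge\big(\frac{k-1}{k}+\frac\gamma2\big)kr$ follows by choosing $d+\eps'<\gamma/2$.

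The remaining work is to (a) make the partition super-regular on $K^k_r$, (b) establish the inheritance properties \ref{lemG:inheritance}, and (c) identify the small exceptional set $V_0$ of size $\bigO(p^{-2})$. For (a) I would use the usual trick: for each cluster $V_{i,j}$, delete the few vertices that have too small a degree into some $V_{i',j'}$ with $\{(i,j),(i',j')\}\in E(K^k_r)$ (there are at most $k\eps'|V_{i,j}|$ of these by Proposition~\ref{prop:neighbourhood} and the max-degree version in Definition~\ref{def:regular}), together with the vertices failing \ref{lemG:gamma} or the inheritance conditions, and move them all to $V_0$; Proposition~\ref{prop:subpairs} guarantees the reduced-graph regularity survives the removal of a linear fraction, at the cost of worsening $\eps'$ to $\eps$. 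For (c), the key input is the regularity inheritance lemmas (Lemmas~\ref{lem:OSRIL} and~\ref{lem:TSRIL}) and Proposition~\ref{prop:chernoff}\,(c): for each regular pair $(V_{i,j},V_{i',j'})$ of the backbone, all but $\bigO(p^{-1}\log(en/|V_{i,j}|))=\bigO(p^{-2})$ vertices $v$ of $\Gamma$ have the property that $(N_\Gamma(v,V_{i,j}),V_{i',j'})$ is one-sided-regular, and all but $\bigO(p^{-2})$ have the two-sided property; likewise all but $\bigO(p^{-2})$ vertices satisfy \ref{lemG:gamma}. Since there are only $\bigO(r_1^2)=\bigO(1)$ pairs to worry about, a union bound puts the total number of bad vertices at $\bigO(p^{-2})$, and these go into $V_0$. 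Finally I would re-equitise: after these deletions the clusters have slightly unequal sizes, so I move a bounded number of further vertices into $V_0$ to restore $k$-equitability, keeping $|V_0|\le\Ca p^{-2}$ and the size bounds~\ref{lemG:size}. I expect the main obstacle to be bookkeeping the interaction between the several rounds of vertex deletion and the resulting degradation of the regularity parameter $\eps$ — one must fix the constants $d$, then $\eps$, then $r_1$, $\Ca$ in the right order (exactly as in the quantifier structure of the statement) so that each application of Lemma~\ref{lem:OSRIL}, Lemma~\ref{lem:TSRIL}, and Proposition~\ref{prop:subpairs} receives a valid input — but since this lemma is quoted verbatim from~\cite[Lemma~24]{ABET}, the cleanest route is to cite that proof rather than reproduce it.
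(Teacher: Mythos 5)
Your overall architecture (apply the minimum-degree sparse regularity lemma, find the backbone structure in the reduced graph, super-regularise on $K^k_r$, and control bad vertices via Lemmas~\ref{lem:OSRIL} and~\ref{lem:TSRIL} and Proposition~\ref{prop:chernoff}) is indeed the architecture of the proof of \cite[Lemma~24]{ABET}, which the present paper does not reproduce but simply cites. However, your sketch has a genuine quantitative gap in where the leftover vertices go. The conclusion demands $|V_0|\le\Ca p^{-2}$, which for $p\ge\Ca(\log n/n)^{1/2}$ is at most $n/\log n=o(n)$; yet the sets you propose to ``absorb into the exceptional set'' are of \emph{linear} size: the exceptional class produced by Lemma~\ref{lem:regularitylemma} may contain up to $\eps' n$ vertices, whole discarded clusters have size $\Theta(n/r)$, the vertices with too small degree into some partner cluster of $K^k_r$ can number $\Theta(\eps' n)$ in total, and re-equitising also moves $\Theta(n/r)$-sized amounts. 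None of this can be placed in $V_0$. The correct argument (as in~\cite{ABET}) instead \emph{redistributes} all of these linearly many vertices back into the clusters: using $\delta(G)\ge\bigl(\tfrac{k-1}{k}+\gamma\bigr)pn$ and the minimum degree of the reduced graph, one shows by a pigeonhole/counting argument that every vertex with typical $\Gamma$-statistics has, for some component $i$ of $K^k_r$, large $G$-degree into \emph{every} cluster $V_{i,j'}$ of that component, so it can be inserted into an appropriate cluster of that component without destroying super-regularity, and the insertions can be spread out so that the size bounds~\ref{lemG:size} and $k$-equitability survive (Proposition~\ref{prop:subpairs} then salvages regularity). Only the vertices that are atypical with respect to $\Gamma$ --- those failing~\ref{lemG:gamma} or the inheritance properties~\ref{lemG:inheritance}, of which there are $O(p^{-2})$ after a union bound over the $O(r_1^2)$ pairs of clusters --- are placed in $V_0$. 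Your plan as written would produce a $V_0$ of size $\Omega(\eps' n)$, violating the lemma.

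A secondary point: the Hajnal--Szemer\'edi theorem only provides a $K_k$-factor in the reduced graph, i.e.\ the graph $K^k_r$, not the backbone $B^k_r$, which additionally requires all edges between differently-labelled clusters of \emph{consecutive} components; for this one needs the theorem on $k$-th powers of Hamilton cycles, as in the corresponding lemma of~\cite{bottcher2009proof}, which your parenthetical alternative correctly points to, so this is a matter of making the right choice rather than a missing idea. Since the paper itself quotes the lemma verbatim from~\cite{ABET}, citing that proof is legitimate; but the sketch you give in place of it would, as it stands, fail on the $|V_0|$ bound.
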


The next step is to find a partition of $H$ which more or less matches that
of $G$. This partition of~$H$ defines an assignment of the vertices of~$H$
to the clusters of~$G$.
In other words, we assign the vertices in $V(H)$ indices $(i,j)$ of the
partition $\cV$, such that about $|V_{i,j}|$ vertices are assigned $(i,j)$
and all edges of~$H$ are assigned to edges of $R^k_r$. In fact, the lemma
states further that most edges of~$H$ are assigned to edges of~$K^k_r$, and
only those incident to vertices of a small set of special vertices~$X$ may be assigned
to other edges of~$R^k_r$.

\begin{lemma}[Lemma for H,~{\cite[Lemma~25]{ABET}}]\label{lem:H2}
Given $D, k, r \geq 1$ and $\xi, \beta > 0 $ the following holds if $\xi \leq 1/(kr)$ and $\beta \leq 10^{-10}\xi^2/(D k^4r)$. Let $H$ be a $D$-degenerate graph on $n$ vertices, let $\mathcal L$ be a labelling of its vertex set of bandwidth at most $\beta n$ and let $\sigma: V(H) \to \{0,\ldots k\}$ be a proper $(k+1)$-colouring that is $(10/\xi, \beta)$-zero-free with respect to $\mathcal L$, where the colour zero does not appear in the first $\sqrt{\beta}n$ vertices of $\mathcal{L}$. Furthermore, let $R^k_r$ be a graph on vertex set $[r] \times [k]$ with $K^k_r \subseteq B^k_r \subseteq R^k_r$ such that for every $i\in [r]$ there exists a vertex $z_i \in \big([r]\setminus\{i\}\big) \times [k]$ with $\big\{z_i, (i,j)\big\} \in E(R^k_r)$ for every $j\in [k]$. Then, given a $k$-equitable integer partition $\{m_{i,j}\}_{i\in[r],j\in[k]}$ of $n$ with $n/(10kr) \leq m_{i,j} \leq 10n/(kr)$ for every $i \in[r]$ and $j\in [k]$, there exists a mapping $f \colon V(H) \to [r]\times[k]$ and a set of special vertices $X \subseteq V(H)$ such that we have for every $i\in [r]$ and $j\in[k]$

\begin{enumerate}[label=\itmarab{H}]
\item\label{lemH:H1} $m_{i,j} - \xi n  \leq |f^{-1}(i,j)| \leq m_{i,j} + \xi n$,
\item\label{lemH:H2} $|X| \leq \xi n$,
\item\label{lemH:H3} $\{f(x),f(y)\} \in E(R^k_r)$  for every $\{x,y\} \in E(H)$,
\item\label{lemH:H4} $y,z\in \cup_{j'\in[k]}f^{-1}(i,j')$ for every $x\in f^{-1}(i,j)\setminus X$ and $xy,yz\in E(H)$, and
\item\label{lemH:H5} $f(x) = \big(1, \sigma(x)\big)$ for every $x$ in the first $\sqrt{\beta}n$ vertices of $\mathcal{L}$.
\end{enumerate}
\end{lemma}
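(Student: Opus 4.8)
The plan is to reprove this along the lines of~\cite[Lemma~25]{ABET} (ultimately following~\cite{bottcher2009proof}). First I would cut $V(H)$, in the order given by~$\cL$, into consecutive \emph{blocks} $B_1,\dots,B_N$ of $4k\beta n$ vertices each; because $H$ has bandwidth at most $\beta n$, every edge of~$H$ lies inside one block or between two consecutive blocks, and the second neighbourhood of any vertex in~$B_t$ lies in $B_{t-1}\cup B_t\cup B_{t+1}$. Since the $(k+1)$-colouring is $(10/\xi,\beta)$-zero-free, at most a $\xi/10$ fraction of the blocks contain a vertex of colour zero, so fewer than $\tfrac19\xi n$ vertices of~$H$ receive colour zero; all of these will be placed in the special set~$X$.

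Next I would fix the cluster assignment of the blocks. Using that $B^k_r\subseteq R^k_r$ and that for each~$i$ there is a vertex $z_i\in\big([r]\setminus\{i\}\big)\times[k]$ adjacent in $R^k_r$ to all of $(i,1),\dots,(i,k)$, I would choose a non-decreasing sequence of column indices $1=c_1\le\dots\le c_N=r$ with $c_{t+1}-c_t\le1$ such that: (a)~the blocks receiving column~$i$ together contain $\sum_j m_{i,j}\pm O(k\beta n)$ vertices; (b)~the first $\sqrt\beta n$ vertices of~$\cL$ lie in blocks with column index~$1$; and (c)~no column change occurs within two blocks of a block containing a colour-zero vertex. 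Requirement~(c) is achievable because the colour-zero blocks are $\Omega(1/\xi)$-separated whereas the slack in~(a) amounts to $\Omega\big(\xi/(k\beta)\big)$ blocks, far exceeding the number of columns. I would then put into~$X$ every vertex of a block $B_t$ with $c_{t-1}\neq c_t$ or $c_t\neq c_{t+1}$ (there are $O(kr\beta n)$ such vertices), and send every colour-zero vertex, lying in a block~$B_t$, to~$z_{c_t}$; by~(c) all its neighbours lie in blocks of column~$c_t$, hence in clusters of column~$c_t$, all of which are adjacent to~$z_{c_t}$, so~\ref{lemH:H3} holds for these edges.

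For each remaining vertex~$x$, lying in a block of column~$i$, it remains to choose a colour $j\in[k]$, i.e.\ a cluster $f(x)=(i,j)$. Taking $j=\sigma(x)$ already yields~\ref{lemH:H3}: away from colour zero, $\sigma$ is a proper $k$-colouring, so an edge joining a colour-$j$ vertex in column~$i$ to a colour-$j'$ vertex in column~$i'$ has $j\neq j'$ and $|i-i'|\le1$ and is thus an edge of~$B^k_r$. The difficulty --- and what I expect to be the main obstacle --- is to reconcile this with the size condition~\ref{lemH:H1}: the $\sigma$-colour classes may be very unevenly spread along~$\cL$, so that a cluster $(i,j)$ could end up far from $m_{i,j}$. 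To repair this I would, exactly as in~\cite{bottcher2009proof,ABET}, replace $\sigma$ within each column's region by a more balanced proper $k$-colouring --- one subdivides the region into pieces of size at most $\xi n/(2k)$, applies to the $\sigma$-colours in each piece a permutation of~$[k]$ chosen greedily so that the $k$ clusters of that column stay equally full, and absorbs into~$X$ the $O(\beta n)$ vertices of a buffer (wider than~$\beta n$) between consecutive pieces, so that $H$ has no edge between two pieces using different permutations and every edge meeting a permutation change is incident to~$X$ and can be coloured consistently. This gives $\big|f^{-1}(i,j)\big|=m_{i,j}\pm\xi n$, i.e.\ \ref{lemH:H1}. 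There are only $O(k/\xi)$ pieces in total, so the buffers add only $O(k\beta n/\xi)$ further vertices to~$X$; and I would take the first $\sqrt\beta n$ vertices of~$\cL$ to form a single piece of column~$1$ carrying the identity permutation, which gives $f(x)=(1,\sigma(x))$ there (establishing~\ref{lemH:H5}) and adds only $O(\sqrt\beta n)$ to the imbalance of column~$1$.

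It then remains to check the five properties. Property~\ref{lemH:H3} holds by construction for every edge, those incident to~$X$ included (handled through the~$z_i$'s and the buffers); \ref{lemH:H4} holds because each vertex outside~$X$ lies in a block~$B_t$ with $c_{t-1}=c_t=c_{t+1}$, so its second neighbourhood lies in blocks of column~$c_t$; \ref{lemH:H1} is the estimate obtained above; \ref{lemH:H2} follows from $|X|\le\tfrac19\xi n+O(kr\beta n)+O(k\beta n/\xi)<\xi n$, using $\beta\le10^{-10}\xi^2/(Dk^4r)$; and \ref{lemH:H5} was arranged directly. The block decomposition makes everything routine except the rebalancing of the colouring, together with the bookkeeping needed to keep~\ref{lemH:H3} valid across the buffers and column changes; this is the heart of the matter and is carried out in~\cite[Lemma~25]{ABET}.
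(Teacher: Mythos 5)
You should note first that this paper does not prove Lemma~\ref{lem:H2} at all: it is quoted verbatim from~\cite[Lemma~25]{ABET}, so the comparison is with that proof, which your sketch broadly tries to follow. The block decomposition, the assignment of blocks to columns, the treatment of colour-zero vertices via $z_{c_t}$, and the bookkeeping for \ref{lemH:H1}, \ref{lemH:H2}, \ref{lemH:H4}, \ref{lemH:H5} are all in order.

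The genuine gap is exactly at the step you yourself flag as ``the heart of the matter'': the rebalancing of the colour classes within a column. Your proposal is to give each piece its own permutation of $[k]$ and to absorb a buffer between pieces into $X$, asserting that every edge meeting a permutation change ``can be coloured consistently''. This does not work as stated, for two reasons. First, membership in $X$ only exempts a vertex from \ref{lemH:H4}; property \ref{lemH:H3} must hold for \emph{every} edge of $H$, including edges inside and across the buffer, so the buffer vertices still need cluster images compatible with both neighbouring permutations. Second, such images need not exist if you only use the edges of $K^k_r\cup B^k_r$: if on one side colour $j$ goes to cluster $(i,\pi_1(j))$ and on the other to $(i',\pi_2(j))$ with $|i-i'|\le 1$, then \ref{lemH:H3} for a crossing edge between colours $j\neq j'$ requires $\pi_1(j)\neq\pi_2(j')$, and if edges of every colour pair cross the boundary (which an adversarial $H$ of bandwidth $\beta n$ can force at every point of $\cL$), this forces $\pi_1=\pi_2$; the same computation shows that inserting intermediate stretches, or routing through the neighbouring column, does not help, since only the second coordinate matters for $B^k_r$-adjacency. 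So with the tools you invoke, the colour-to-cluster permutation can never change along $\cL$, and then \ref{lemH:H1} fails whenever $\sigma$'s colour classes are badly distributed (e.g.\ a bipartite $H$ with $k\ge 3$). The missing ingredient is the hypothesis vertex $z_i$, which in the actual proof is used not only for colour zero but as the device that makes switching possible: in a transition zone one temporarily assigns an entire colour class to $z_i$, which is adjacent in $R^k_r$ to all clusters $(i,1),\dots,(i,k)$, and this breaks the rigidity above (one must also keep such transition zones away from colour-zero blocks, which is where the $(10/\xi,\beta)$-zero-freeness and the spacing constants enter). Without supplying this mechanism (or an equivalent one), the balancing step of your argument collapses, so the proposal as written does not constitute a proof.
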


Our next lemma concerns the pre-embedding stage, in which we cover the
vertices in $V_0\subset V(G)$ with vertices of~$H$.  For this purpose we
use the vertices of~$H$ whose neighbourhood contains only~$s$
colours. Let~$x$ be one of these vertices, let~$H'$ be the subgraph
of~$H$ induced on all vertices of distance at most~$s+1$ from~$x$
(including~$x$), and let~$T$ be the set of those vertices in~$H'$ of
distance exactly $s+1$ from~$x$. We cover a vertex~$v$ of $V_0$ by
embedding such a vertex~$x$ of~$H$ onto~$v$, and we also embed all other
vertices in the corresponding~$H'$ which are not in~$T$.  This creates
image restrictions on the vertices of $G$ to which we can embed the
vertices in~$T$.  For the application of Lemma~\ref{thm:blowup} we need
that these image restrictions satisfy certain conditions, and that this
pre-embedding preserves the super-regularity of the remaining partition
of~$G$. For achieving the latter we take a random induced subgraph~$G'$
of~$G$ containing roughly $\mu n$ vertices, and perform the pre-embedding in~$G'$ only. In each cluster of~$G$, the
subgraph~$G'$ selects roughly a $\mu$-fraction of the vertices, and
the induced partitions on~$G'$ and on $G-V(G')$ are also super-regular.
The next lemma states that we can also obtain suitable image restrictions for the vertices in~$T$ while
performing the pre-embedding in~$G'$.

This lemma is a main difference to the proof in~\cite{ABET} and is the
place where we need that the neighbourhood of every vertex in $G$ has a
certain density of $K_s$'s. Another difference to our proof strategy that
this lemma creates, is that it selects a clique $\{\kq_1,\ldots,\kq_k\}$ in~$R$, which might not be one of the cliques of the chosen $K_r^k\subseteq R$, and the vertices of~$T$ are assigned to the corresponding clusters in $G$ (that is, the image
restriction of~$y\in T$ is a subset of the cluster $V_{\kq_j}$ to which it is assigned).
This assignment may well differ from the assignment given
by the Lemma for~$H$, so in our proof of Theorem~\ref{thm:maink} we need to
adapt to this difference by reassigning some more $H$-vertices.

\begin{lemma}[Pre-embedding lemma]\label{lem:coverv}
  For $\Delta,k\ge 2$, $2\le s\le k-1$, and $\gamma,d>0$ with
  $d \le \frac{\gamma}{32}$ there exists $\zeta>0$ such that for every
  $\eps'>0$ there exists $\eps_0>0$ such that for all $0 < \eps < \eps_0$,
  all $\mu>0$ and $r\geq 10^5\gamma^{-1}$, there exists a constant $C^\ast>0$ such that
  the random graph $\Gamma=G(n,p)$ a.a.s.\ has the following property if
  $p\ge C^\ast\big(\tfrac{\log n}{n}\big)^{1/\Delta}$. Suppose we have the
  following setup.

  \begin{enumerate}[label=\itmarab{P}]
    \item $H'$ is a graph with $\Delta(H')\le\Delta$, with a root vertex $x$,
  and no vertex at distance greater than $s+1$ from $x$. 

    \item $\rho$ is a
      proper $k$-colouring of $V(H')$ in which $N(x)$ receives at most $s$
      colours, and $T$ is the set of vertices in $H'$ at distance exactly
      $s+1$ from $x$.  

    \item $G$ is a spanning subgraph of $\Gamma$ with
      $\delta(G)\ge\big(\tfrac{k-1}{k}+{\gamma}\big)pn$ with an
      $(\eps,p)$-regular partition
      $V(G)=V_0\dcup V_1\dcup\dots\dcup V_r$ with $(\eps,d,p)$-reduced
      graph $R$, and such that $\frac{n}{4r} \le |V_i| \le \frac{4n}{r}$
      for all $i \in [r]$.

  \item $G'\subset G$ is a graph with $|V(G')|=(1\pm\eps)\mu n$, with
    $\delta(G')\ge\big(\tfrac{k-1}{k}+{\gamma}\big)p|V(G')|$, and
    $|N_{G'}(W)| \le 2 \mu n p^{t}$ for any set $W \subset V(G')$ of size
    $t \le \Delta$. Suppose further that
    $\left|V_i\cap V(G')\right|=(1\pm\eps)\mu|V_i|$ for each $i$, and that
    $V_0\cap V(G'),\dots,V_r\cap V(G')$ is also an $(\eps,p)$-regular
    partition of $G'$ with $(\eps,d,p)$-reduced graph $R$.

  \item $v\in V(G')$ is a vertex such that there are at least
    $\gamma p^{\binom{s+1}{2}}(\mu n)^s$ copies of $K_s$ in $N_{G'}(v)$ .
  \end{enumerate}
  Then there exist a partial embedding $\phi:V(H')\setminus T\to V(G')$ of $H'$ into $G'$ 
  and a subset $\{\kq_1,\dots,\kq_k\}\subset
  [r]$ with the following properties. For each $u,u'\in T$,
  each $j\in[k]$, and for $\Pi(u)=\phi\big(N_{H'}(u)\cap \Dom(\phi)\big)$, we have
  \begin{enumerate}[label=\itmarabp{P}{'}]
     \item\label{item:pel-v} $\phi(x)=v$.
     \item\label{item:pel-clique} $\kq_1,\dots,\kq_k$ forms a clique in $R$.
     \item\label{item:pel-size} $\big|N_\Gamma\big(\Pi(u)\big)\cap V_{\kq_{\rho(u)}}\big|=(1\pm\eps')p^{|\Pi(u)|}|V_{\kq_{\rho(u)}}|$.
     \item\label{item:pel-minsize} $\big|N_G\big(\Pi(u)\big)\cap V_{\kq_{\rho(u)}}\cap V(G')\big|\ge2\zeta p^{|\Pi(u)|}|V_{\kq_{\rho(u)}}\cap V(G')|$.
     \item\label{item:pel-osril} If $j\neq\rho(u)$ and $|\Pi(u)|\le\Delta-1$ then the pair $\big(N_\Gamma(\Pi(u),V_{\kq_{\rho(u)}}),V_{\kq_j}\big)$ is $(\eps',d,p)_G$-regular.
     \item\label{item:pel-tsril} If $uu'\in H'$ then the pair $\big(N_\Gamma(\Pi(u),V_{\kq_{\rho(u)}}),N_\Gamma(\Pi(u'),V_{\kq_{\rho(u')}})\big)$ is $(\eps',d,p)_G$-regular.
 \end{enumerate}
\end{lemma}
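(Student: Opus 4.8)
The plan is to build the partial embedding $\phi$ layer by layer, moving outward from the root $x$, and simultaneously to identify the clique $\{\kq_1,\dots,\kq_k\}$ in $R$ that will host the terminal vertices $T$. The key structural observation is that $H'$ has radius at most $s+1$ from $x$, so it decomposes into distance layers $L_0=\{x\}, L_1=N(x), L_2,\dots,L_{s+1}=T$; since $\Delta(H')\le\Delta$ and $H'$ has no vertex beyond distance $s+1$, we have $|V(H')\setminus T|\le 1+\Delta+\dots+\Delta^s =: n_0$, a constant. The colouring $\rho$ restricts $N(x)=L_1$ to at most $s$ colours, say $\{1,\dots,s\}$ after relabelling; a proper colouring then forces every $L_\ell$ with $\ell\le s$ to lie within a bounded colour palette, and in particular the whole of $V(H')\setminus T$ can be properly coloured using a structure that ``fans out'' from an $s$-coloured core.

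Here is the order of operations. First I would use hypotheses \ref{item:pel-v} (setting $\phi(x)=v$) and the fact that $N_{G'}(v)$ contains at least $\gamma p^{\binom{s+1}{2}}(\mu n)^s$ copies of $K_s$: by an averaging argument over the clusters, a positive $\gamma$-fraction of these $K_s$'s has its $s$ vertices spread across $s$ \emph{distinct} clusters $V_{\kq_1},\dots,V_{\kq_s}$ of $G'$ which — together with $v$'s cluster — lie in a common clique of $R$ (using $\delta(R)\ge(\tfrac{k-1}{k}+\tfrac\gamma2)r$ and that $r$ is large, one gets $s+1$ clusters pairwise joined in $R$; extend to a $k$-clique $\kq_1,\dots,\kq_k$ in $R$ greedily, again by the minimum-degree condition, which yields \ref{item:pel-clique}). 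Next, embed $L_1=N(x)$ into $V_{\kq_1},\dots,V_{\kq_s}$ respecting $\rho$, choosing the images to be vertices of such a good $K_s$ so that all adjacencies to $v$ hold; then embed $L_2,\dots,L_s$ greedily into the appropriate clusters $V_{\kq_j}$ (colour $c$ going to $V_{\kq_c}$), at each step picking an image inside the common $\Gamma$-neighbourhood of the already-embedded neighbours. Finiteness of $V(H')\setminus T$ and the ``many $K_s$'s'' hypothesis, propagated down the layers via the typical neighbourhood-size bounds (Proposition~\ref{prop:chernoff}, and the $|N_{G'}(W)|$ bound in \ref{item:pel-v}), let this greedy process succeed while keeping every partial image-neighbourhood of size $\Theta(p^{|W|}\mu n)$. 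Finally, for each $u\in T$ with $\Pi(u)=\phi(N_{H'}(u)\cap\Dom\phi)$ (a set of size $|\Pi(u)|\le\Delta$ lying in the $k$-clique's clusters) I must verify \ref{item:pel-size}--\ref{item:pel-tsril}; this is the delicate part.

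Property \ref{item:pel-size} is the statement that the common $\Gamma$-neighbourhood of $\Pi(u)$ in $V_{\kq_{\rho(u)}}$ has its expected size up to $(1\pm\eps')$, and \ref{item:pel-minsize} its $G$-analogue within $G'$; these follow from choosing all images of $V(H')\setminus T$ to avoid the (few) atypical vertices guaranteed by Proposition~\ref{prop:chernoff}(c) and by the $(\eps,d,p)_G$-super-regularity transferred to $G'$, together with a careful bookkeeping of how the density constant $d$ degrades through $|\Pi(u)|\le\Delta$ successive restrictions — this is where the constraint $d\le\gamma/32$ and the output constant $\zeta=\zeta(\Delta,k,s,\gamma,d)$ come from. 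Properties \ref{item:pel-osril} and \ref{item:pel-tsril} are one-sided and two-sided regularity inheritance for the pairs obtained by intersecting a cluster with $N_\Gamma(\Pi(u))$; the plan is to apply Lemma~\ref{lem:OSRIL} and Lemma~\ref{lem:TSRIL} iteratively, once for each vertex of $\Pi(u)$, starting from the $(\eps,d,p)_G$-regular pairs of the reduced graph $R$ and passing to a common neighbourhood at each step. The main obstacle — and the place where the new ideas of the paper live — is that each such application consumes a small ``bad set'' of forbidden centres (of size $O(p^{-1}\log(en/|V_i|))$ or $O(\max\{p^{-2},\dots\})$), and we must route the \emph{entire} embedding of $V(H')\setminus T$, which has bounded size but whose images simultaneously serve as inheritance-centres for up to $\Delta$ nested regularity requirements across all $u\in T$, so that no image ever falls into any of these finitely many bad sets while still lying in the correct shrinking common-neighbourhood. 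Handling this requires choosing the order of embedding so that inheritance lemmas are applied \emph{before} the relevant vertices are embedded (fixing the bad sets in advance), and then checking that the good vertices still form a $\Theta(p^{|W|}\mu n)$-sized target — i.e. that the bad sets are genuinely lower-order, which is exactly where the hypothesis $p\ge C^\ast(\log n/n)^{1/\Delta}$ with $C^\ast$ large is used.
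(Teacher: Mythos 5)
Your outline reproduces the easy part of the paper's argument (a sequential regularity embedding outward from $x$, maintaining candidate sets and avoiding the small bad sets of Lemmas~\ref{lem:OSRIL} and~\ref{lem:TSRIL}), but it has genuine gaps exactly where the new work of the lemma lies, namely in converting the hypothesis ``$N_{G'}(v)$ contains $\gamma p^{\binom{s+1}{2}}(\mu n)^s$ copies of $K_s$'' into a structure that can anchor such an embedding. Your plan to pick a single ``good'' copy of $K_s$, spread over clusters forming a clique of $R$, and to embed $N(x)$ onto its vertices fails on several counts: (i) the minimum degree of $R$ gives a common neighbour for any $k$ clusters, but it does not make the clusters hosting a given copy pairwise adjacent in $R$, and the coarse regularity of the pairs $(V_i,V_j)$ says nothing about the bipartite graphs between the sets $N_{G'}(v)\cap V_i$, which have size about $\mu p|V_i|\ll\eps|V_i|$; (ii) $N(x)$ may have up to $\Delta>s$ vertices with repeated colours, so it cannot be injected into one $K_s$; (iii) once a copy is fixed its vertices are forced, so there is no freedom left to dodge the bad sets of the inheritance lemmas (which, worse, depend adaptively on other vertices of the same copy), and nothing guarantees that a prescribed vertex of the copy has \emph{any} $G$-neighbours in a given cluster $V_{\kq_j}$ --- the minimum degree $(\tfrac{k-1}{k}+\gamma)pn$ allows a single vertex to miss a $\tfrac1k$-fraction of the clusters completely, so ``greedy into the appropriate cluster'' can stall immediately at distance $2$. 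The paper's solution is to build a robust structure first: it randomly selects a bounded family $L$ of $\ell$ clusters (precisely to break the circular dependence between the number of parts of a finer partition and the regularity parameter $\eps$), applies the refining regularity lemma (Lemma~\ref{lem:SRLb}) to $N_{G'}(v)$ together with these clusters, and uses the $K_s$-count in a deletion argument to extract $s$ pairwise $(\nu_0,d',p)$-regular clusters $W'_1,\dots,W'_s\subset N_{G'}(v)$; the images of $N(x)$ are then \emph{generic} vertices of these clusters, and the clique $\kq_1,\dots,\kq_k$ is chosen afterwards, compatibly with the $W'_i$, via the weighted minimum-degree condition~\eqref{eq:weighted-regular-min-degree}.

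Even granting such a structure, your picture ``only $L_1$ goes into $N(v)$; all later layers go to big clusters compatible with everything'' cannot be realised with this minimum degree: asking one big cluster to be regular and dense with all $s$ of the $W'_i$ and simultaneously $R$-adjacent to up to $k-1$ previously chosen big clusters imposes up to $s+k-1>k$ constraints, each of which may exclude close to a $\tfrac1k$-fraction of the (weighted) clusters, so the intersection can be empty. The paper therefore chooses the clusters sequentially, obtaining only a ``triangular'' pattern of regular pairs, and this forces certain vertices at distance at least $2$ from $x$ (those with $\rho'(u)+d(x,u)\le s+1$) to be embedded into the small clusters $W'_i$ as well. Since $|W'_i|=\Theta(\mu pn/(\ell t_1))$, such a vertex can tolerate at most $\Delta-3$ previously embedded neighbours, or its candidate set falls below the $p^{-2}\log n$ threshold needed for two-sided inheritance; guaranteeing this bound is the whole point of the recolouring $\rho'$ (with an extra colour $k+1$) and the order $<_{\rho'}$, cf.~\eqref{eq:order} and~\eqref{eq:coverv:two}. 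None of this appears in your proposal, and the place you flag as the main difficulty (bad-set bookkeeping for nested inheritance in the full-size clusters) is in fact the routine part: there the candidate sets comfortably dominate the bad sets once $C^\ast$ is large.
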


After the pre-embedding stage, we want to apply the sparse blow-up lemma to
embed the remainder of~$H$.
However, the sizes of the clusters $V_{i,j}$ from Lemma~\ref{lem:G} do not quite match the sizes of the sets $X_{i,j}$ from Lemma~\ref{lem:H2}. Also, Lemma~\ref{lem:coverv} embeds some vertices, creating a little further imbalance, and we need to slightly alter the mapping $f$ from Lemma~\ref{lem:H2} to accommodate these pre-embedded vertices. The next lemma allows us to change the sizes of the clusters $V_{i,j}$ slightly to match the partition of $H$, without destroying the properties of the partition of $G$ and of the pre-embedded vertices we worked to achieve.

\begin{lemma}[Balancing lemma,~{\cite[Lemma~27]{ABET}}]
\label{lem:balancing}
For all integers $k\geq 1$, $r_1, \Delta \geq 1$, and reals $\gamma, d >0$ and $0 < \eps < \min\{d,1/(2k)\}$ there exist $\xi >0$ and $\Ca >0$ 
such that the following is true for every $p \geq \Ca \left(\log n/n\right)^{1/2}$ and every $10\gamma^{-1}\le r \leq r_1$ provided that $n$ is large enough. Let~$\Gamma$ be a graph on vertex set $[n]$ and let $G=(V,E)\subseteq \Gamma$ be a (not necessarily spanning) subgraph with vertex partition $\cV = \{\Vij\}_{i\in[r],j\in[k]}$ that satisfies $n/(8kr) \leq |\Vij| \leq 4n/(kr)$ for each $i\in[r]$, $j\in[k]$. 
Let $\{n_{i,j}\}_{i \in [r], j\in [k]}$ be an integer partition of $\sum_{i\in[r],j\in[k]} |V_{i,j}|$. Let $R^k_r$ be a graph on the vertex set $[r] \times [k]$ with minimum degree $\delta(R^k_r) \geq \big((k-1)/k+\gamma/2\big) kr$ such that $K^k_r \subseteq B^k_r \subseteq R^k_r$.
Suppose that the partition $\cV$ satisfies the following properties for
each $i\in[r]$, each $j\neq j'\in[k]$, and each $v\in V$. Suppose we have
\begin{enumerate}[label=\itmarab{B}]
\item \label{lembalancing:sizes} $n_{i,j} -  \xi n \leq |V_{i,j}| \leq n_{i,j} +  \xi n$,
\item \label{lembalancing:regular1} $\cV$ is $\big(\tfrac{\eps}{4},d,p\big)_G$-regular on $R^k_r$ and $\big(\tfrac{\eps}{4},d,p\big)_G$-super-regular on $K^k_r$,
\item \label{lembalancing:inheritance1}  $\big(\NGa(v, V_{i,j}),V_{i,j'}\big)$ and $\big(\NGa(v, V_{i,j}),\NGa(v, V_{i,j'})\big)$ are $\big(\tfrac{\eps}{4},d,p\big)_G$-regular, and
\item \label{lembalancing:gamma1} $|\NGa(v,V_{i,j})| = \big(1 \pm \tfrac{\eps}{4}\big)p|\Vij|$.
\end{enumerate}
Then, there exists a partition $\mathcal{V'}=
\{V'_{i,j}\}_{i\in[r],j\in[k]}$ of $V$ such that for each $i\in[r]$, each
$j\neq j'\in [k]$, and each $v\in V$ we have
\begin{enumerate}[label=\itmarabp{B}{'}]
\item\label{lembalancing:sizesout} $|V'_{i,j}|=n_{i,j}$,
\item\label{lembalancing:symd} $|V_{i,j}\symd V'_{i,j}|\le 10^{-10}\eps^4k^{-2}r_1^{-2} n$,
\item \label{lembalancing:regular} $\mathcal{V'}$ is $(\eps,d,p)_G$-regular on $R^k_r$ and $(\eps,d,p)_G$-super-regular on $K^k_r$,
\item \label{lembalancing:inheritance} $\big(N_{\Gamma}(v,V'_{i,j}), V'_{i,j'}\big)$ and $\big(N_{\Gamma}(v,V'_{i,j}), N_\Gamma(v,V'_{i,j'})\big)$ are $(\eps,d,p)_G$-regular, and
\item\label{lembalancing:gammaout} for each $1\le s\le\Delta$ and for each $v_1,\ldots,v_s\in[n]$
\[\Big|\bigcap_{i\in[s]} N_\Gamma(v_1,V_{i,j})\symd \bigcap_{i\in[s]} N_\Gamma(v_1,V'_{i,j})\Big|\le 10^{-10}\eps^4k^{-2}r_1^{-2}\deg_\Gamma(v_1,\ldots,v_s)+\Ca\log n\,.\]
\end{enumerate} 
\end{lemma}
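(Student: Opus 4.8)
The plan is to obtain $\cVV$ from $\cV$ by moving a small, carefully chosen set of vertices between clusters, keeping the reduced graph $R^k_r$ unchanged, and then to verify \ref{lembalancing:sizesout}--\ref{lembalancing:gammaout} essentially by smallness of the change. Write $\delta_{i,j}=n_{i,j}-|V_{i,j}|$; by \ref{lembalancing:sizes} we have $|\delta_{i,j}|\le\xi n$ and $\sum_{i,j}\delta_{i,j}=0$. We realise this demand by a flow on the $kr$ clusters using only two kinds of moves. A \emph{short move} transfers vertices between $V_{i,j}$ and $V_{i\pm1,j}$: every edge $\{(i,j),(i\pm1,j'')\}$ with $j''\neq j$ lies in $B^k_r\subseteq R^k_r$, so the pairs $(V_{i,j},V_{i\pm1,j''})$ are $(\tfrac\eps4,d,p)_G$-regular by \ref{lembalancing:regular1}. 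A \emph{long move} transfers vertices from $V_{i,j}$ to some $V_{i',j'}$ with $j'\neq j$, where $i'$ is chosen to be a component in which $(i,j)$ is $R^k_r$-adjacent to \emph{all} of $(i',1),\dots,(i',k)$; since $\delta(R^k_r)\ge\big(\tfrac{k-1}k+\tfrac\gamma2\big)kr$, the vertex $(i,j)$ has at most $r-\tfrac{\gamma kr}2$ non-neighbours in $R^k_r$, hence at least $\tfrac{\gamma kr}2\ge1$ components are available. Short edges connect, for each colour $j$, all clusters $(1,j),\dots,(r,j)$ along a path, and long edges connect some cluster of colour $j$ to some cluster of every other colour $j'$, so the auxiliary graph on the clusters is connected; the demand $\vec\delta$ can therefore be routed with at most $O(k^2r_1^2\xi n)$ flow across any edge and through any cluster. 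For $\xi$ small compared with $\eps,d,\gamma,k^{-1},r_1^{-1}$ this is at most $10^{-10}\eps^4k^{-2}r_1^{-2}n$, giving \ref{lembalancing:symd}, while \ref{lembalancing:sizesout} holds by flow conservation.

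Next I specify which vertices move. A unit of flow along a path $c_0-c_1-\dots-c_\ell$ is realised by moving $f$ \emph{original} residents of $c_t$ to $c_{t+1}$ for each $t<\ell$; thus each moved vertex moves exactly once, from its own cluster across one auxiliary edge. Call a vertex $v\in V_{i,j}$ destined for $V_{i',j'}$ \emph{eligible} if $|N_G(v,V_{i',j''})|\ge(d-\tfrac\eps4)p|V_{i',j''}|$ for every $j''\neq j'$; in both kinds of move all pairs $(V_{i,j},V_{i',j''})$, $j''\neq j'$, are $R^k_r$-edges, so by Proposition~\ref{prop:neighbourhood} all but at most $k\cdot\tfrac\eps4|V_{i,j}|\le\tfrac12|V_{i,j}|$ vertices of $V_{i,j}$ are eligible — far more than the $O(k^2r_1^2\xi n)$ that need to move. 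Among the eligible vertices of each cluster we pick the set actually moved to be pseudorandom, applying Lemma~\ref{lem:hypgeo} (equivalently, Theorem~\ref{thm:hypergeometric} and a union bound) with the test sets $T$ ranging over the traces, on the eligible set, of $N_\Gamma(w)$ for $w\in V(\Gamma)$ and of $\bigcap_{t\le\Delta}N_\Gamma(v_t)$ for tuples $v_1,\dots,v_\Delta$. Writing $R_{i,j}$ for the set removed from $V_{i,j}$ and $A_{i,j}$ for the eligible vertices arriving in $V_{i,j}$, this yields $\deg_\Gamma(w,R_{i,j})\le\big(\tfrac{|R_{i,j}|}{|V_{i,j}|}+\eta\big)\deg_\Gamma(w,V_{i,j})+C\log n$ for all $w$ (and the analogue for tuples), with $\eta$ as small as we like and $C=C(\eta,\Delta)$; the set $V_{i,j}\symd V'_{i,j}$ is $R_{i,j}\cup A_{i,j}$, of size $O(k^2r_1^2\xi n)$.

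It remains to verify \ref{lembalancing:regular}--\ref{lembalancing:gammaout}. For \ref{lembalancing:regular} on an edge of $R^k_r$: $(V_{i,j}\setminus R_{i,j},V_{i',j'}\setminus R_{i',j'})$ is $(\tfrac\eps2,d,p)_G$-regular by Proposition~\ref{prop:subpairs}, and since $|A_{i,j}|,|A_{i',j'}|$ are $\le\tfrac\eps4$ of the respective cluster sizes, any $\eps$-large subsets of $V'_{i,j},V'_{i',j'}$ still contain $\tfrac\eps4$-large subsets of $V_{i,j},V_{i',j'}$, so $d_{G,p}$ on them is $\ge d-\eps$; super-regularity on $K^k_r$ is regular by the same argument, and the degree bound holds for each arriving vertex by its eligibility (its eligibility controls its degree into \emph{every} other cluster of its destination component, i.e.\ on both sides of each $K^k_r$-pair it meets) together with $|V'_{i,j''}|=(1\pm o(1))|V_{i,j''}|$. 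For a vertex $w$ that \emph{stays} in $V_{i,j}$ and a neighbouring cluster $V_{i,j'}$ of $K^k_r$: by \ref{lembalancing:gamma1} we have $\deg_\Gamma(w,V_{i,j'})<2p|V_{i,j'}|$, so the maximum in the super-regularity condition is $p|V_{i,j'}|$, and
\[|N_G(w,V'_{i,j'})|\ge|N_G(w,V_{i,j'})|-\deg_\Gamma(w,R_{i,j'})\ge(d-\tfrac\eps4)p|V_{i,j'}|-\big(\tfrac{|R_{i,j'}|}{|V_{i,j'}|}+\eta\big)2p|V_{i,j'}|-C\log n\ge(d-\eps)p|V'_{i,j'}|\,,\]
once $\eta$ and $|R_{i,j'}|/|V_{i,j'}|,|A_{i,j'}|/|V_{i,j'}|=O(k^3r_1^3\xi)$ are small enough and $n$ is large enough that $C\log n\le\tfrac\eps{1000}\,p|V_{i,j'}|$ — valid since $p|V_{i,j'}|\ge pn/(8kr_1)\gg\log n$. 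The same estimate with $\bigcap_{t}N_\Gamma(v_t)$ in place of $N_\Gamma(w)$ (and \ref{lembalancing:gamma1} for the one-vertex case) gives \ref{lembalancing:inheritance} and \ref{lembalancing:gammaout}, the $\Ca\log n$ term absorbing both the $C\log n$ of Lemma~\ref{lem:hypgeo} and the deviation over the union bound; \ref{lembalancing:regular1}--\ref{lembalancing:gamma1} are not needed for \ref{lembalancing:inheritance1}'s analogue beyond what we have used.

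The crux — and the reason this cannot be done by a soft ``delete a few vertices'' argument — is the clash of scales in the last displayed inequality: hitting the \emph{exact} sizes $n_{i,j}$ forces moving $\Theta_{\eps,d,k,r_1}(n)$ vertices out of some clusters, a quantity that is \emph{not} $o(pn)$, whereas super-regularity and \ref{lembalancing:regular1}--\ref{lembalancing:gamma1} live at scale $pn$. The plan resolves this by: (i) using \ref{lembalancing:gamma1} to rule out heavy vertices, so the relevant term of super-regularity is the $p|Y|$ term, which rescales with $p$; (ii) noting that \emph{enlarging} a cluster only helps degree conditions, since arriving vertices contribute non-negatively; and (iii) removing pseudorandom sets, so every vertex loses only a fixed fraction of its own (already $p$-sized) degree into the cluster, which is therefore absorbable. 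The remaining difficulty — that one may move a vertex only across a pair on which one has regularity control — is handled by the backbone for colour-preserving moves and by the minimum degree of $R^k_r$ for the colour-changing ones; the rest is choosing $\xi$ small and $\Ca$ large.
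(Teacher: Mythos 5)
This lemma is quoted from~\cite[Lemma~27]{ABET} and the present paper contains no proof of it, so there is nothing in-paper to compare against verbatim; your plan is, however, essentially the standard redistribution argument used there (route the size deficits by moving small, pseudorandomly chosen sets of ``eligible'' vertices across pairs controlled either by the backbone $B^k_r$ or by a component of $K^k_r$ fully adjacent in $R^k_r$ to the source cluster), and it is correct. One fine point to tighten: your remark that ``enlarging a cluster only helps degree conditions'' is not literally true for super-regularity, because the condition reads $|N_G(x,Y)|\ge(d-\eps)\max\bigl(p|Y|,\deg_\Gamma(x,Y)/2\bigr)$ and adding $A_{i,j'}$ to $V_{i,j'}$ can increase $\deg_\Gamma(x,V'_{i,j'})$ well beyond $2p|V'_{i,j'}|$ when $p\ll 1$ (the arriving set has size $\Theta(n)$, not $\Theta(pn)$); you need to apply your hypergeometric test-set control not only to the removed sets $R_{i,j}$ but also to each arriving set $A_{i,j}$ (summed over its at most $kr$ source clusters) together with~\ref{lembalancing:gamma1} to conclude $\deg_\Gamma(x,V'_{i,j'})\le 2p|V'_{i,j'}|$, so that the maximum remains the $p|Y|$ term --- the machinery you set up (test sets $N_\Gamma(w)$ for all $w$) suffices for this, but the step should be stated.
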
 

After applying Lemma~\ref{lem:balancing} it remains only to check that the
conditions of Lemma~\ref{thm:blowup} are met to complete the embedding of
$H$ and thus the proof of Theorem~\ref{thm:maink}.

\section{Proof of the pre-embedding lemma}
\label{sec:pel}

The basic idea of the proof is as follows. We construct $\phi$ by first embedding $x$ to $v$, then the neighbours of $x$ to $W:=N_{G'}(v)$, and then we keep embedding further vertices at greater distance from $x$ into $G'$ until we finally reach the neighbours of $T$ which we embed such that their neighbourhoods in the regular partition $(V_q)_{q\in R}$ have the desired sizes and regularity properties.

We want to use the regularity method to perform this embedding. That is, we begin by assigning each vertex of $H'$ to a cluster, such that each edge of $H'$ is assigned to two clusters which form an $(\eps,d,p)$-regular pair; these clusters are the initial \emph{candidate sets} for the vertices of $H'$. We embed vertices sequentially. When we embed $y\in V(H')$, we will naturally decrease the candidate sets for $z\in V(H')$ such that $yz\in E(H')$; we need to choose an image for $y$ which has enough $G$-neighbours in these candidate sets in order for the embedding to continue. To guarantee this, we will also need to keep track of $\Gamma$-neighbourhoods, and we will need to maintain the property that if two vertices of $H'$ are adjacent, then their candidate sets form an $(\eps,d,p)$-regular pair. All these properties are easy to maintain because only very few vertices will fail them at any step. The idea is that at the end, the candidate sets for the vertices of $T$ (which we do not embed) are large enough for~\ref{item:pel-minsize}.

In order to apply this strategy to prove Lemma~\ref{lem:coverv}, we need a new \emph{fine} regular partition, which partitions $W$ into clusters and which is also consistent with $(V_q)_{q\in R}$. The obvious way to do this is to apply Lemma~\ref{lem:SRLb} with $W$ and the sets $(V(G')\cap V_q\setminus W)_{q\in V(R)}$ as an initial partition. However this turns out not to work: we need the bound $t_1$ of the number of parts in the fine partition produced by Lemma~\ref{lem:SRLb} to be small compared to $\eps^{-1}$, where $\eps$ is the regularity from Lemma~\ref{lem:G}, but we have $v(R)\gg\eps^{-1}$ and $t_1\gg v(R)$, giving a circular dependence.  So what we do instead is to fix a number $\ell$ which is small compared to $\eps^{-1}$, select a set $L$ of $\ell$ clusters from $(V_q)_{q\in V(R)}$, and apply Lemma~\ref{lem:SRLb} to the union of $W$ and the $(V(G')\cap V_q\setminus W)_{q\in L}$ with that initial partition; this breaks the circular dependence.

We need $R[L]$ to have similar statistical properties to $R$ in order to begin the regularity embedding, i.e.\ to find parts of the fine partition to which we can assign the vertices of $H'$. We show that a random choice of $L$ is likely to give the desired $R[L]$.

\begin{proof}[Proof of Lemma~\ref{lem:coverv}]
First we fix all constants that we need throughout the proof.
Let $\Delta, k\geq 2$ and $\gamma, d>0$ be given. Recall $d\le\tfrac{\gamma}{32}$ by assumption of the lemma.
Let $d' = \min(\tfrac12d, 10^{-5k}\gamma)$ and choose $\xi= 10^{-6}2^{-k}\gamma$ and an integer
\[\ell=\max\big(1000\xi^{-6}\log\xi^{-1},100\cdot 2^k\gamma^{-1}\big)\,.\]

Let $\nua_{\Delta-1,\Delta-1}=\nua_{i,\Delta}=\nua_{\Delta,i}=\tfrac{1}{100\Delta}8^{-\Delta}d'^{\Delta}$ for $i \in [\Delta]$.
For each $(i,j) \in\{0, \dots, \Delta-1\}^2\setminus\{(\Delta-1,\Delta-1)\}$ in reverse lexicographic order, we choose~$\nua_{i,j}\le\nua_{i+1,j},\nua_{i,j+1},\nua_{i+1,j+1}
$
not larger than the $\eps_0$ returned by Lemma~\ref{lem:OSRIL} for both input $\nua_{i+1,j}$ and $d'$, and for input $\nua_{i,j+1}$ and $d'$, and not larger than the $\eps_0$ returned by Lemma~\ref{lem:TSRIL} for input $\nua_{i+1,j+1}$ and $d'$.
Choose $\nu_0=\min\big(\nua_{0,0},\tfrac{d'}{2},10^{-5}\gamma\big)$.
Now, Lemma~\ref{lem:SRLb} with input $\nu_0^2/(16\ell^2)$ and $2\ell$ returns $t_1$.

Set $\zeta= \big(\tfrac{d'}{4}\big)^\Delta /4 t_1$.
Given $\eps'$, let $\epsaa_{\Delta} = \eps'$ and for every $i \in (\Delta-1, \dots,1,0)$, let $\epsaa_{i}\le\epsaa_{i+1}$ be returned by Lemma~\ref{lem:OSRIL} with input $\eo= \epsaa_{i+1}$ and $\ao = d$.
Next, let $\epsa_{\Delta-1,\Delta-1}=\eps'$ and $\epsa_{i,\Delta}=\epsa_{\Delta,i}=1$ for $i \in [\Delta]$.
For each $(i,j) \in\{0, \dots, \Delta-1\}^2\setminus\{(\Delta-1,\Delta-1)\}$ in reverse lexicographic order, we choose~$\epsa_{i,j}\le\epsa_{i+1,j},\epsa_{i,j+1},\epsa_{i+1,j+1}
$
not larger than the $\eps_0$ returned by Lemma~\ref{lem:OSRIL} for both input $\epsa_{i+1,j}$ and $d$, and for input $\epsa_{i,j+1}$ and $d$, and not larger than the $\eps_0$ returned by Lemma~\ref{lem:TSRIL} for input $\epsa_{i+1,j+1}$ and $d$.

We choose $\eps_0 \leq \epsaa_0, \epsa_{0,0}, \frac{\nu_0}{2t_1}$ small enough such that $(1+\eps_0)^{\Delta} \leq 1+\eps'$ and $(1-\eps_0)^{\Delta} \geq 1-\eps'$.
Given $r\ge 10^5\gamma^{-1}$, $\eps$ with $0<\eps\le\eps_0$, and $\mu > 0$, let $C$ be a large enough constant for all of the above calls to Lemmas~\ref{lem:OSRIL} and~\ref{lem:TSRIL}, and for Proposition~\ref{prop:chernoff} with input $\eps_0$.
Finally, we choose $\Ca=10^{10\Delta}d'^{-\Delta}\ell t_1r\mu^{-1}$.

Let $\Gamma = G(n,p)$ with $p \geq \Ca {(\log n/n)}^{1/\Delta}$.
Then $\Gamma$ satisfies a.a.s.~the properties stated in Lemma~\ref{lem:OSRIL}, Lemma~\ref{lem:TSRIL}, Proposition~\ref{prop:chernoff} and Lemma~\ref{lem:SRLb} with the parameters specified above.
We assume from now on that $\Gamma$ satisfies these good events and has these properties.
Let $G'$, $v\in V(G')$, $G$, $\{V_i\}_{i\in \{0, \ldots, r\}}$, $H'$, $x \in V(H')$, the $k$-colouring $\rho$ of $V(H')$, and the $(\eps,d,p)$-reduced graph $R$, be as in the statement of the lemma. Since $\eps\le\eps_0$, $R$ is also an $(\eps_0,d,p)$-reduced graph.

To be able to apply Lemma~\ref{lem:SRLb} we need to choose a suitable subset of the clusters $\{V_i\}_{i\in \{0, \ldots, r\}}$ of bounded size.
As the clusters $\{V_i\}_{i\in \{0, \ldots, r\}}$ might be of different sizes and we will want to have a minimum degree condition on the reduced graph, we will consider a weighted version of this degree that takes the cluster sizes into account.
\begin{claim}\label{claim:Vast}
    There exists $L \subset [r]$ of size $\ell$ such that
    $R^\ast:=R[L]$ satisfies the following weighted minimum degree condition, where $V^\ast = \bigcup_{i \in L} V_i$.
    \[
        \forall i \in L:
        \sum_{j \in N_R(i) \cap L} \frac{|V_j|}{|V^\ast|}
        \ge
        \left(\frac{k-1}{k} + \frac{\gamma}{5}\right)\,.
    \]
    Additionally, we have that
    \[
        W:=
        \left\{
            w \in N_{G'}(v): |N_{G'}(w) \cap V^\ast| \ge \left(\frac{k-1}{k} + \frac{\gamma}{5}\right)p|V^\ast \cap V(G')|
        \right\}
    \]
    has size at least $(1 - \xi) |N_{G'}(v)|$    and there are at least $\frac12 \gamma p^{\binom{s+1}{2}}{(\mu n)}^s$ copies of $K_s$ in $W$.
\end{claim}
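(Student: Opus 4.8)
The plan is to choose $L$ uniformly at random among all $\ell$-subsets of $[r]$ and show that, with positive probability, all the stated properties hold simultaneously; a union bound over the (constantly many, once $\ell$ and $\Delta$ are fixed) bad events will suffice. For the weighted minimum degree condition, fix $i\in L$ and note that, since $\delta(G)\ge\big(\tfrac{k-1}{k}+\gamma\big)pn$, each cluster $V_i$ has, summing over $j\in N_R(i)$, a total mass $\sum_{j\in N_R(i)}|V_j|$ which is at least $\big(\tfrac{k-1}{k}+\tfrac{\gamma}{2}\big)n$ minus error terms coming from the $(\eps,d,p)$-regularity, the exceptional set $V_0$, and the non-edges of $R$ (each of which costs at most $d+\eps$ in density). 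Concretely, one first shows $\sum_{j\in N_R(i)}\tfrac{|V_j|}{|V(G)\setminus V_0|}\ge\tfrac{k-1}{k}+\tfrac{\gamma}{3}$ for every $i\in[r]$ (this is a deterministic consequence of the hypotheses on $G$ and $R$ in item~(P3), using $d\le\tfrac{\gamma}{32}$ and $\eps\le\eps_0$ small). Then, conditioning on $i\in L$, the quantity $\sum_{j\in N_R(i)\cap L}|V_j|$ is (up to the sizes being weighted) a sum over a random $(\ell-1)$-subset of $[r]\setminus\{i\}$, which is essentially a hypergeometric-type average; since all $|V_j|$ lie in $[\tfrac{n}{4r},\tfrac{4n}{r}]$, Theorem~\ref{thm:hypergeometric} (or a direct second-moment estimate) shows that for $\ell$ large enough — which is why $\ell\ge 1000\xi^{-6}\log\xi^{-1}$ was chosen — the random average $\sum_{j\in N_R(i)\cap L}\tfrac{|V_j|}{|V^\ast|}$ concentrates around its mean $\sum_{j\in N_R(i)}\tfrac{|V_j|}{|V(G)\setminus V_0|}$ within $\tfrac{\gamma}{30}$, with failure probability at most, say, $e^{-\sqrt{\ell}}\cdot\tfrac{1}{r}$; a union bound over $i\in[r]$ then gives the first displayed inequality for all $i\in L$ simultaneously, with room to spare.

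For the statement about $W$: for each fixed $w\in N_{G'}(v)$ the same deterministic argument as above (applied to $G'$, using the hypothesis $\delta(G')\ge\big(\tfrac{k-1}{k}+\gamma\big)p|V(G')|$ and that $V(G')\cap V_0,\dots,V(G')\cap V_r$ is also an $(\eps,d,p)$-regular partition with reduced graph $R$) shows that $\sum_{j\in N_R(i_w)}|N_{G'}(w)\cap V_j\cap V(G')|$ is large for the index $i_w$ with $w\in V_{i_w}$; more simply, one shows $|N_{G'}(w)\cap V^\ast\cap V(G')|$ concentrates around $\tfrac{|V^\ast|}{|V(G')|}\cdot|N_{G'}(w)|\ge\big(\tfrac{k-1}{k}+\tfrac{\gamma}{4}\big)p|V^\ast\cap V(G')|$ over the random choice of $L$ (again hypergeometric concentration in $\ell$), so that $\Pr[w\notin W]\le\xi^2$ say. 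Then $\Ex|N_{G'}(v)\setminus W|\le\xi^2|N_{G'}(v)|$, and Markov gives $|N_{G'}(v)\setminus W|\le\xi|N_{G'}(v)|$ with probability at least $1-\xi$, hence $|W|\ge(1-\xi)|N_{G'}(v)|$ on a good event. For the copies of $K_s$: there are at least $\gamma p^{\binom{s+1}{2}}(\mu n)^s\ge\gamma p^{\binom{s}{2}}\cdot p^s(\mu n)^s$ copies of $K_s$ in $N_{G'}(v)$ by~(P5); each such copy lies entirely in $W$ unless one of its $s$ vertices is in $N_{G'}(v)\setminus W$, and the number of $K_s$-copies through a fixed vertex $w\in N_{G'}(v)$ is at most the number of $K_{s-1}$'s in $N_{G'}(w)\cap N_{G'}(v)$, which by the last condition of~(P4) is $\le\binom{|N_{G'}(w)|}{s-1}\le(2\mu n p)^{s-1}/(s-1)!\,$, so the number of $K_s$-copies meeting $N_{G'}(v)\setminus W$ is $O(\xi(\mu n)(\mu n p)^{s-1}p^{\,?})$; choosing $\xi$ small (as done: $\xi=10^{-6}2^{-k}\gamma$) makes this at most $\tfrac12\gamma p^{\binom{s+1}{2}}(\mu n)^s$, leaving at least half the copies inside $W$. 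Since each of the three bad events has probability bounded away from $1$, they can be avoided simultaneously, proving the claim.

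The main obstacle I expect is getting the concentration of the \emph{weighted} degrees $\sum_{j\in N_R(i)\cap L}\tfrac{|V_j|}{|V^\ast|}$ right: both numerator and denominator $|V^\ast|=\sum_{j\in L}|V_j|$ depend on the random set $L$, so one cannot simply apply a single Chernoff/hypergeometric bound. The clean way is to first show $|V^\ast|$ itself concentrates around $\tfrac{\ell}{r}|V(G)\setminus V_0|$ (hypergeometric, since the $|V_j|$ are bounded), and separately that $\sum_{j\in N_R(i)\cap L}|V_j|$ concentrates around $\tfrac{\ell}{r}\sum_{j\in N_R(i)}|V_j|$, and then take the ratio, absorbing the small relative errors; the boundedness $|V_j|\in[\tfrac{n}{4r},\tfrac{4n}{r}]$ keeps all these relative errors $O(1/\sqrt{\ell})$, which is $\ll\gamma$ by the choice of $\ell$. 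A secondary subtlety is bookkeeping the density losses from non-edges of $R$ versus edges of $R$ when passing from $\delta(G)$ to a lower bound on $\sum_{j\in N_R(i)}|V_j|$, but since $R$ has $\delta(R)\ge(\tfrac{k-1}{k}+\tfrac{\gamma}{2})r$ by the properties carried over in~(P3)/Lemma~\ref{lem:G}, this is immediate and contributes nothing beyond the stated $\tfrac{\gamma}{5}$ slack.
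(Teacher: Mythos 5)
Your overall strategy is the paper's: choose $L$ uniformly at random, derive a weighted degree bound for each cluster from $\delta(G)$ by edge-counting, transfer it to $L$ by hypergeometric-type concentration, handle $W$ by an expectation-plus-Markov argument over $w\in N_{G'}(v)$, and finally show that the $K_s$-copies meeting $N_{G'}(v)\setminus W$ are few. However, two steps fail as written. First, the probabilistic bookkeeping for the weighted minimum degree: you claim a per-cluster failure probability of $e^{-\sqrt{\ell}}\cdot\tfrac1r$ and then union bound over all $i\in[r]$. No concentration inequality can deliver the factor $\tfrac1r$: the event that $L$ largely misses $N_R(i)$ has probability at least roughly $k^{-\ell}$, a constant depending only on $\gamma$ and $k$, while $r$ (coming from the sparse regularity lemma in the application) can be far larger than $e^{c\ell}$, so the union bound over $[r]$ gives a bound exceeding $1$. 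The paper's fix is to demand the condition only for $i\in L$: the expected number of bad indices \emph{inside} $L$ is at most $\ell\cdot 8\xi^{-1}e^{-\xi^5\ell/3}<1/10$ by the choice of $\ell$, and Markov finishes. Note also that your fallback ``direct second-moment estimate'' is quantitatively insufficient even for this fix, since Chebyshev only gives per-index failure of order $1/(\gamma^2\ell)$, not the $o(1/\ell)$ one needs; you genuinely need exponential concentration in $\ell$, which the paper obtains by discretising the weights $|V_j|r/n$ into $O(\xi^{-1})$ classes and applying Theorem~\ref{thm:hypergeometric} to each class (the same device is needed for your numerator/denominator split and for the degrees of the vertices $w$ into $V^\ast$).

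Second, and more seriously, the $K_s$-count at the end is wrong by a polynomial factor in $p$. You bound the number of $K_s$-copies through a fixed $w$ by $\binom{|N_{G'}(w)|}{s-1}\le(2\mu np)^{s-1}/(s-1)!$, i.e.\ you count all $(s-1)$-subsets of $N_{G'}(w)$ rather than cliques lying in common neighbourhoods. This gives at most $\xi\cdot 2\mu pn\cdot(2\mu np)^{s-1}=\Theta\big(\xi(\mu n)^sp^{s}\big)$ destroyed copies, which must be compared with $\tfrac12\gamma p^{\binom{s+1}{2}}(\mu n)^s$; since $\binom{s+1}{2}>s$ for $s\ge2$ and $p\to0$, no constant choice of $\xi$ can close this gap --- the ``$p^{?}$'' in your estimate is exactly the missing factor $p^{\binom{s}{2}}$, and it cannot be recovered from $\xi$. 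The correct argument (the paper's) iterates the hypothesis that every set of $t\le\Delta$ vertices of $G'$ has at most $2\mu np^t$ common neighbours: building the clique one vertex at a time shows that $v$ together with any $w\in N_{G'}(v)$ lies in at most $\prod_{t=2}^{s}2p^t\mu n=p^{\binom{s+1}{2}-1}(2\mu n)^{s-1}$ copies of $K_{s+1}$, so the copies of $K_s$ lost by discarding $N_{G'}(v)\setminus W$ number at most $\xi\cdot2\mu pn\cdot p^{\binom{s+1}{2}-1}(2\mu n)^{s-1}$, which is below $\tfrac12\gamma p^{\binom{s+1}{2}}(\mu n)^s$ precisely because $\xi=10^{-6}2^{-k}\gamma$. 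With these two repairs your plan coincides with the paper's proof.
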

\begin{claimproof}
We choose a subset $L \subset [r]$ of size $\ell$ uniformly at random.
First, we will transfer the minimum degree of $G$ to the reduced graph and show that with high probability the minimum degree is preserved on the chosen clusters.
Recall that $G$ satisfies a minimum degree of $\delta(G) \ge (\frac{k-1}{k} + \gamma)pn$ and that we have the following bounds on the sizes of the clusters.
\begin{equation}\label{eq:cluster-sizes}
    \frac{4n}{r}
    \ge
    |V_i|
    \ge
    \frac{n}{4r}
    \ge
    Cp^{-1} \log n
\end{equation}
Without loss of generality, we may assume that no $V_i$ forms an irregular pair with more than $\sqrt{\eps}$ of the clusters, otherwise, add it to $V_0$, which over all clusters increases the size of $V_0$ by at most $4\sqrt{\eps}n$.
Fix $i \in [r]$.
Proposition~\ref{prop:chernoff} applied to the edges between $V_i$ and $V_0$ implies that
\[
    e(V_i, V_0)
    \le
    2p (\eps + 4\sqrt{\eps})n |V_i|
    \quad
    \text{and}
    \quad
    e(V_i)
    \le
    2p {|V_i|}^2
    \le 
    2p \frac{16}{r}n |V_i|
\]
Also, we can bound the number of edges from $V_i$ to other clusters that are in pairs which are not dense or $(\eps,p)$-regular as follows.
\[
    e\Big(V_i, \bigcup_{j \in R \setminus N_R(i)} V_j\Big)
    \le
    dpn |V_i| + 2p \cdot 4\sqrt{\eps} n |V_i|.
\]
Putting the above together, we obtain that 
\[
    e\Big(V_i, \bigcup_{j \in N_R(i)} V_j\Big)
    \ge
    \left(\frac{k-1}{k} + \gamma - 2\eps - 16\sqrt{\eps} - d - \frac{32}{r}\right)pn |V_i|
\]
As, again by Proposition~\ref{prop:chernoff}, the number of edges between any $V_i$ and $V_j$ is at most $(1+\eps_0)|V_i||V_j|$, we get that
\[
    \sum_{j \in N_R(i)} \frac{|V_j|r}{|V(G)|}
    \ge
    \left(\frac{k-1}{k} + \gamma - 2\eps - 16\sqrt{\eps} - d - \frac{32}{r}\right)
    {(1+\eps_0)}^{-1} r 
    \ge
    \left(\frac{k-1}{k} + \frac{\gamma}{2} \right) r.
\]
By the size conditions on the clusters, the relative sizes $w_j := \frac{|V_j|r}{|V(G)|}$ take values in $(\frac 14,4)$.
We now consider
\[
    w'_j = \xi \left\lfloor {w_j}/{\xi} \right\rfloor,
\]
the discretisation of $w_j$ into steps of size~$\xi$.
Of these discretised weights, we will ignore those that occur fewer than $\xi^2 r$ times.
We lose at most a factor of $4\xi$ due to the discretisation as all weights are at least $\frac 14$.
Also weights in $(\frac 14,4)$ occuring fewer than $\xi^2 r$ times contribute at most $16 \xi r$ to the sum, so we get the following lower bound.
\[
    \sum_{j \in N_R(i)} w'_j
    \ge
    \left(1-4\xi\right)
    \left(\frac{k-1}{k} + \frac{\gamma}{2} \right) r - 16 \xi r
    \ge
    \left(\frac{k-1}{k} + \frac{\gamma}{3} \right) r.
\]
We can now apply the hypergeometric inequality (Theorem~\ref{thm:hypergeometric}) to all possible rounded weight values separately.
For any $j \in [r]$ the probability that $j$ is in $L$ is $\ell/r$ and so for a given density in $(\frac 14, 4)$, which occurs, say, $\theta r$ times, the probability that this density is chosen fewer than $(1-\xi)\theta\ell$ times is at most $2e^{-\xi^2 \cdot \xi \theta \ell/3} \le 2e^{-\xi^5 \ell/3}$.
This implies by the union bound that with probability at most $4\xi^{-1} 2e^{-\xi^5 \ell/3}$ we do not have
\begin{equation}\label{eq:weighted-min-degree}
    \sum_{j \in N_R(i) \cap L} w_j
    \ge
    (1-\xi)\left(\frac{k-1}{k} + \frac{\gamma}{3} \right) \frac{\ell}{r} r
    \ge
    \left(\frac{k-1}{k} + \frac{\gamma}{4} \right) \ell.
\end{equation}
So by the union bound the expected number of vertices in $R^\ast$ that do not satisfy~\eqref{eq:weighted-min-degree} is at most $\ell 8 \xi^{-1} e^{-{\xi^5 \ell}/{3}}<1/10$, where the inequality is by choice of $\ell$.
By Markov's inequality, the probability that there is any such vertex in $R^\ast$ is thus at most~$1/10$. 
By the same discretisation of $w_j$ and application of the hypergeometric inequality to the discretised weights, we can also deduce that
\begin{equation}\label{eq:size-Vast}
    |V^\ast|
    =
    \frac{|V(G)|}{r}
    \sum_{i \in L} w_i
    =
    (1 \pm 100\xi)
    \frac{\ell}{r}
    \sum_{i \in [r]} w_i
    =
    (1 \pm 100\xi)
    (1 \pm \eps)
    \frac{\ell |V(G)|}{r}
\end{equation}
with probability at least $9/10$.
Putting~\eqref{eq:weighted-min-degree} and~\eqref{eq:size-Vast} together implies that with probability at least $8/10$ the first claimed statement holds.

For the claim, we also require that the minimum degree condition of the vertices in $N_{G'}(v)$ carries over to the chosen clusters for most vertices.
Fix $w$ in $N_{G'}$.
For $j \in [r]$ we consider the following weighted $p$-density, which may take values in $(0,5)$.
\[
    {d}_{w,j} = {d}_{G,p}(\{w\},V_j \cap V(G')) \frac{|V_j \cap V(G')|r}{|V(G')|}.
\]
Accounting for the exceptional set $V_0$ with Proposition~\ref{prop:chernoff}, the minimum degree condition on $G'$ of $(\frac{k-1}{k} + {\gamma})p|V(G')|$ implies that these weighted $p$-densities satisfy
\[
    \sum_{j \in [r]}  {d}_{w,j}
    \ge
    \left(\frac{k-1}{k} + {\gamma} - 2\eps\right)r
    \ge
    \left(\frac{k-1}{k} + \frac{\gamma}{2}\right)r.
\]
Similarly to before, we consider ${d'}_{w,i} = \xi \lfloor {{d}_{w,i}}/{\xi} \rfloor$, the discretisation of ${d}_{w,i}$ into steps of size~$\xi$.
Of these discretised weighted densities, we ignore those that occur fewer than $\xi^2 r$ times and those that are smaller than $\sqrt{\xi}$.
The small densities contribute at most $\sqrt{\xi}r$ to the sum and we lose a factor of at most $\sqrt{\xi}$ due to the discretisation for larger values.
Also weights in $(\sqrt{\xi},5)$ occuring fewer than $\xi^2 r$ times contribute at most $25\xi r$ to the sum, so we get the following lower bound.
\[
    \sum_{i \in [r]} {d'}_{w,i}
    \ge
    (1 - \sqrt{\xi}) \left(\frac{k-1}{k} + \frac{\gamma}{2} - \sqrt{\xi} - 25 \xi\right)r
    \ge
    \left(\frac{k-1}{k} + \frac{\gamma}{3}\right)r.
\]
Applying the hypergeometric inequality to all density values separately as before, we get that for any $w \in N_{G'}(v)$ with probability at most $5\xi^{-1} 2e^{-\xi^5 \ell/3} \ge \xi / 10$ we do not have
\begin{equation}\label{eq:W-weighted-minimum-degree}
    \sum_{i \in L} {d'}_{w,i}
    \ge
    \left( 1 - \xi\right) \left(\frac{k-1}{k} + \frac{\gamma}{3}\right) \frac{\ell}{r} r
    \ge
    \left(\frac{k-1}{k} + \frac{\gamma}{4}\right) \frac{\ell}{r} r.
\end{equation}
So the expected number of vertices in $N_{G'}(v)$ not satisfying~\eqref{eq:W-weighted-minimum-degree} is at most $\xi |N_{G'}(v)| / 10$.
By Markov's inequality, with probability at least $9/10$ at most a fraction $\xi$ of vertices in $N_{G'}(v)$ violate~\eqref{eq:W-weighted-minimum-degree}.
And in particular all vertices satisfying~\eqref{eq:W-weighted-minimum-degree} have at least
\[
    (1 - 100\xi)
    (1 - \eps)
    \left(\frac{k-1}{k} + \frac{\gamma}{4}\right)
    (1 - \eps)
    \mu p|V^\ast|
    \ge
    \left(\frac{k-1}{k} + \frac{\gamma}{5}\right)p|V^\ast \cap V(G')|
\]
neighbours in $V^\ast \cap V(G')$ if~\eqref{eq:size-Vast} holds.
So indeed with probability at least $7/10$ the first two claimed statements hold, so assume we chose $L$ such that they do.

For the claim it only remains to show the lower bound on the number of cliques in $W$.
It follows, by inductively building up cliques, from the assumption in the lemma that any $t \le \Delta$ vertices of $G'$ have at most $2 p^t \mu n$ common neighbours in $G'$, that $v$ and each $w \in N_{G'}(v)$ are contained in at most
\[
    \prod_{t=2}^{s}
    2 p^t \mu n
    =
    p^{\binom{s+1}{2}-1} {(2\mu n)}^{s-1}
\]
copies of $K_{s+1}$.
Since $|W|\ge (1-\xi)|N_{G'}(v)|$, the number of copies of $K_s$ which are in $N_{G'}(v)$ but not $W$ is at most $\xi|N_{G'}(v)|\cdot p^{\binom{s+1}{2}-1}(2\mu n)^{s-1}$. Since $|N_{G'}(v)|\le 2\mu pn $, and $N_{G'}(v)$ contains at least $\gamma p^{\binom{s+1}{2}}(\mu n)^s$ copies of $K_s$,  there are at least 
\[
    \gamma p^{\binom{s+1}{2}}{(\mu n)}^s
    - \xi \cdot2\mu pn\cdot p^{\binom{s+1}{2}-1} {(2\mu n)}^{s-1}
    \ge
    \tfrac12 \gamma p^{\binom{s+1}{2}} {(\mu n)}^s
\]
copies of $K_s$ in $W$.
\end{claimproof}
Let $\{W_i\}_{i \in [\ell]}$ be an arbitrary equipartition of $W$ into $\ell$ parts (so that the fine partition we are about to obtain has enough parts in $W$).
We apply Lemma~\ref{lem:SRLb} to $G'$ with the $2\ell$-part initial partition $\{ (V_i \cap V(G')) \setminus W \}_{i\in L} \cup \{W_i\}_{i \in [\ell]}$ and input parameter $\nu_0^2/(16\ell^2)$.
This returns a partition refining each of these sets into $1\le t\le t_1$ clusters $\{V_{i,j}\}_{i\in L,j\in[t]}\cup\{W_{i,j}\}_{i\in[\ell],j\in[t]}$ together with small exceptional sets~$\{V_{i,0}:i\in L\}\cup\{ W_{i,0}: i\in[\ell]\}$.
From the definition of a regular refinement, there are at most $\tfrac{\nu_0^2}{16\ell^2}\cdot(2\ell t)^2$ irregular pairs in this partition, and in particular at most $\nu_0 t$ of the clusters form an irregular pair with more than $\nu_0 t$ of the clusters.
Include the vertices of all those clusters in the exceptional sets, which now make up a fraction of at most $2 \nu_0$ of the vertices.

We now want to obtain $s$ clusters $W'_1,\dots,W'_s$ in ${\{W_{i,j}\}}_{i \in [\ell],j \in [t]}$ that are pairwise $(\nu_0, d', p)$-regular.
Assume for a contradiction that no such clusters exist.
So each $K_k$ in $W$ must either contain an edge meeting an exceptional set $W_{i,0}$, one which does not lie in a $(\nu_0, d', p)$-regular pair or one that is contained completely in some set $W_{i,j}$ for $i\in[\ell]$ and $j\in[t]$.
Note that we have for all $i \in [\ell]$ and $j \in [t]$ that
\[
    |W_{i,j}| \ge \frac{1}{2 \ell t_1} |W| \ge \frac{\mu np}{4\ell t_1} \ge Cp^{-1} \log n.
\]
So we may apply Proposition~\ref{prop:chernoff} to bound the number of edges within and between clusters.
Using the upper bound on common neighbourhoods in $G'$ given in the lemma statement to bound the number of edges meeting the exceptional sets, we obtain that deleting at most
\[
    2 \nu_0 |W| 2p^2 \mu n
    + 2p(\nu_0 + d') {|W|}^2
    + \ell 2p {(|W|/\ell)}^2
    \le
    (8\nu_0 + 8\nu_0 + 8d' + {2}/{\ell}) p^3 \mu^2 n^2
\]
edges would remove all cliques from $W$.
Again by the upper bound on common neighbourhoods in $G'$ given in the lemma any of these edges is contained in at most
\[
    \prod_{t=3}^{s}
    2 p^t \mu n
    =
    p^{\binom{s+1}{2}-3} {(2\mu n)}^{s-2}
\]
copies of $K_{s+1}$ together with $v$.
So there would be at most
\[
    (16\nu_0 + 8d' + {2}/{\ell}) p^3 \mu^2 n^2 p^{\binom{s+1}{2}-3} {(2\mu n)}^{s-2}
    <
    \tfrac12 \gamma p^{\binom{s+1}{2}} {(\mu n)}^s
\]
copies of $K_s$ in $W$, a contradiction. It follows that there are some $s$ clusters in $W$ which are pairwise $(\nu_0,d',p)$-regular. Let $W'_1,\dots,W'_s$ in ${\{W_{i,j}\}}_{i \in [\ell],j \in [t]}$ be pairwise $(\nu_0, d', p)$-regular.

Because the vertices of $W$ each have at least $\big(\tfrac{k-1}{k}+\tfrac{\gamma}{5}\big)p|V^*\cap V(G')|$ $G'$-neighbours in $V^*$, the number of edges leaving each cluster $W'_i$ to $V^*$ is at least $|W'_i|\big(\tfrac{k-1}{k}+\tfrac{\gamma}{5}\big)p|V^*\cap V(G')|$. By Proposition~\ref{prop:chernoff}, and because at most $\nu_0t$ irregular pairs leave $W'_i$, at most $(1+\eps_0)p|W'_i|\nu_0|V^*\cap V(G')|$ of these edges lie in irregular pairs. By definition, at most $d'p|W'_i||V^*\cap V(G')|$ of these edges lie in pairs of relative density less than $d'$. Thus the remaining edges lie in $(\nu_0,d',p)$-regular pairs, and there are at least $|W'_i|\big(\tfrac{k-1}{k}+\tfrac{\gamma}{6}\big)p|V^*\cap V(G')|$ of these edges. Since the number of edges between $W'_i$ and any given $V_{i',j'}$ is at most $(1+\eps_0)p|W'_i||V_{i',j'}|$ by Proposition~\ref{prop:chernoff}, we obtain
\begin{equation}\label{eq:weighted-regular-min-degree}
    \sum_{V_{i',j'}:\, (W'_i, V_{i',j'})\text{ is }(\nu_0, d', p)-\text{regular}}
    \frac{|V_{i',j'}|}{|V^\ast \cap V(G')|}
    \ge
    \left(\frac{k-1}{k} + \frac{\gamma}{8}\right)\,.
\end{equation}

Now we can choose the clusters into which we will embed the vertices of $H'$. We choose sequentially
\[
    (\kq_{s+1},j_{s+1}),\dots,(\kq_{k+1},j_{k+1}) \in L \times [t]
\]
such that for each $1\le i\le s$ and each $s+1\le i'\le k+1$, the pair $\big(W'_i,V_{\kq_{i'},j_{i'}}\cap V(G')\big)$ is $(\nu_0,d',p)$-regular, and for each $s+1\le i'<i''\le k+1$ the pair $(\kq_{i}',\kq_{i''})$ is an edge of $R^*$. This is possible by~\eqref{eq:weighted-regular-min-degree} and Claim~\ref{claim:Vast}, which give a weighted minimum degree condition that implies that for any $k$ clusters (in $W$ or $V^*$ or a mixture) there is a cluster in $V^*$ which satisfies the given condition with respect to all $k$ clusters.

We then choose pairs $(\kq_{s},j_{s}),\dots,(\kq_{1},j_{1})$ in that order sequentially such that for each $a \in \{s, \dots, 1\}$ the clusters
\[
    W'_1,\dots,W'_{a-1}, V_{\kq_{a},j_{a}},V_{\kq_{a+1},j_{a+1}},\dots,V_{\kq_{k+1},j_{k+1}}
\]
satisfy the same condition, i.e.\ for each $1\le i\le a$ and each $a+1\le i'\le k+1$, the pair $\big(W'_i,V_{\kq_{i'},j_{i'}}\cap V(G')\big)$ is $(\nu_0,d',p)$-regular, and for each $a+1\le i'<i''\le k+1$ the pair $(\kq_{i}',\kq_{i''})$ is an edge of $R^*$. Note that by choice of $\eps_0$, if $(\kq_{i'},\kq_{i''})$ is an edge of $R^*$ then the pair $\big(V_{\kq_{i'},j_{i'}}\cap V(G')\setminus W,V_{\kq_{i''},j_{i''}}\cap V(G')\setminus W\big)$ is $(\nu_0,d',p)$-regular in $G'$. For convenience, we let $V'_i:=V_{\kq_i,j_i}\cap V(G')\setminus W$ for each $1\le i\le k+1$.

We will embed $H'-(\{x\}\cup T)$ into the chosen clusters, i.e.\ $W'_1,\dots,W'_s,V'_1,\dots,V'_{k+1}$, using the regularity embedding strategy mentioned above. We will need to embed some vertices of $H'$ which are not neighbours of $x$ into the sets $W'_i$. For this to work, each such vertex $u$ needs to have at most $\Delta-3$ neighbours which we embed before $u$, and the aim of the next arguments is to assign vertices of $H'$ to clusters, and put an order on $V(H')$, which ensures this.

Recall that $\rho$ is a proper $k$-colouring of $H'$ which uses only $s$ colours on $N(x)$. Reordering the colours if necessary, let us assume $\rho$ uses only colours in $[s]$ on $N(x)$.
We define a proper $(k+1)$-vertex colouring $\rho':V(H') \to [k+1]$ inductively as follows.
Initially we set $\rho'(w)=\rho(w)$ for all $w$ in $H'$.
Let
\[
    U_{\rho'} = \bigcup_{i=2}^{s} \left\{ w \in N^i(x): \rho'(w) \le s-i+1 \right\},
\]
where $N^i(x)$ refers to the vertices at distance $i$ from $x$.
If $U_{\rho'}$ contains a vertex $w$ with no neighbour in ${\rho'}^{-1}(i)$ for some $\rho'(w)+1\le i\le k+1$, we set $\rho'(w)=i$ (if there are several such $i$, we choose one arbitrarily). We repeat this step until $U_{\rho'}$ contains no such vertices. Since the colour of any given vertex only increases through this process, the recolouring procedure must terminate eventually. The resulting $\rho'$ has the following property: if $u$ is any vertex with $d(x,u)\ge 2$ and $d(x,u)+\rho'(u)\le s+1$, then $u$ has a neighbour in each of the colour classes $\rho'(u)+1,\dots,k+1$. In particular, since $\rho'(u)\le s-1$ (as otherwise $d(x,u)+\rho'(u)\le s+1$ is impossible), and since $s\le k-1$ by assumption of the lemma, $u$ has a neighbour in each of the colour classes $k-1$, $k$ and $k+1$. Observe that no vertex in these colour classes is in $U_{\rho'}$ by definition.
 
Note that the colouring remains unchanged on $N(x)$ and the vertices at distance $s+1$ from $x$.
We define an order $<_{\rho'}$ on $V(H')\setminus \{x\}$ by putting first all the vertices of $U_{\rho'}$ in an arbitrary order, then the remaining vertices of $V(H')\setminus (T\cup\{x\})$ in an arbitrary order, and finally the vertices of $T$ in an arbitrary order. With the colouring $\rho'$ defined as above, this gives us, for all $u$ at distance at least two from $x$ with $\rho'(u) + d(x,u) \le s+1$:
\begin{equation}
    \label{eq:order}
    |\text{pred}_{<_{\rho'}}(u) \cap N(u)| = |\{u': u' <_{\rho'} u, u' \in N(u)\}| \le \Delta -  3\,.
\end{equation}
Now we can assign the vertices of $H'$ to clusters.
For $u \in V(H')$, let 
\[
    V_u = V_{\kq_{\rho'(u)}}
    \quad \text{and} \quad
    C_u =
    \begin{cases}
        W'_{\rho'(u)}
        & \text{if } \rho'(u) + d(x,u) \le s+1 \\
        V_{\kq_{\rho'(u)},j_{\rho'(u)}}
        & \text{otherwise}.\\
    \end{cases}
\]
We now iteratively embed the vertices of $H'$ in the order specified above respecting the assignments to clusters. The following claim, which we prove by induction on the number of embedded vertices, encapsulates the conditions we maintain through this embedding.
Here, as in the statement of the lemma, we set $\Pi(u)=\phi\big(N_{H'}(u)\cap \Dom(\phi)\big)$, and recall that $T$ is the vertices in $H'$ at distance exactly $s+1$ from $v$.

\begin{claim}\label{claim:coverv}
    For each integer $0 \le z \le |V(H')\setminus T|-1$ there exists an embedding $\phi$ of the first $z$ vertices of~$H'\setminus (T\cup\{x\})$ (w.r.t.\ to the order $<_{\rho'}$) into $G$ such that
    \begin{enumerate}[label=\itmarab{I}]
        \item\label{item:claim-coverv-right-cluster} for every $u \in \Dom(\phi)$ we have $\phi(u) \in C_{u}$,
    \end{enumerate}
    and for every $u,u' \in H'\setminus (\Dom(\phi)\cup\{x\})$, where $u' \in N_{H'}(u)$ we have the following.
    \begin{enumerate}[label=\itmarab{I},start=2]
        \item\label{item:claim-coverv-2} $|N_G(\Pi(u),C_u)| \geq \left(\frac{d'}4\right)^{|\Pi(u)|} p^{|\Pi(u)|}|C_u|$,
        \item\label{item:claim-coverv-3} $|\NGa(\Pi(u),C_u)| = {(1\pm\nu_0)}^{|\Pi(u)|}p^{|\Pi(u)|} |C_u|$,
        \item\label{item:claim-coverv-4} $\big(\NGa(\Pi(u),C_u),\NGa(\Pi(u'),C_{u'})\big)$ is $(\nua_{|\Pi(u)|,|\Pi(u')|},d',p)_G$-regular.
    \end{enumerate}
    Also, if $d(x,u)+\rho'(u),d(x,u')+\rho'(u')> s+1$ we have
    \begin{enumerate}[label=\itmarab{L}]
        \item\label{item:claim-coverv-1prime} if $|\Pi(u)|\le\Delta-1$ then $\big(\NGa(\Pi(u),V_u),V_{\kq_j}\big)$ is $(\epsaa_{|\Pi(u)|}, d, p)_G$-regular for each $j\neq\rho'(u)$,
        \item\label{item:claim-coverv-3prime} $|\NGa(\Pi(u),V_u)| = {(1\pm\eps_0)}^{|\Pi(u)|}p^{|\Pi(u)|} |V_u|$,
        \item\label{item:claim-coverv-4prime} $\big(\NGa(\Pi(u),V_u),\NGa(\Pi(u'),V_{u'})\big)$ is $(\epsa_{|\Pi(u)|,|\Pi(u')|},d,p)_G$-regular.
    \end{enumerate}
\end{claim}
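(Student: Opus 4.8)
The plan is to prove Claim~\ref{claim:coverv} by induction on $z$, building $\phi$ one vertex at a time in the order $<_{\rho'}$ and keeping all images inside the selected clusters $W'_1,\dots,W'_s,V_{\kq_1,j_1},\dots,V_{\kq_{k+1},j_{k+1}}\subseteq V(G')$ (so that, as $G'$ is induced, $N_G$ and $N_{G'}$ agree on every set we touch, and \ref{item:claim-coverv-right-cluster} is automatic). For the base case $z=0$ the map $\phi$ is empty, so $\Pi(u)=\varnothing$ for all $u$; then \ref{item:claim-coverv-2}, \ref{item:claim-coverv-3} and \ref{item:claim-coverv-3prime} hold trivially, while \ref{item:claim-coverv-4}, \ref{item:claim-coverv-1prime} and \ref{item:claim-coverv-4prime} reduce to saying that $(C_u,C_{u'})$ is $(\nua_{0,0},d',p)_G$-regular for every edge $uu'$ of $H'$ and that $(V_u,V_{\kq_j})$ is $(\epsaa_0,d,p)_G$- resp.\ $(\epsa_{0,0},d,p)_G$-regular for every $u$ and $j\ne\rho'(u)$. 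The former holds because every edge of $H'$ joins two of the selected clusters, which were chosen pairwise $(\nu_0,d',p)_G$-regular, and $\nu_0\le\nua_{0,0}$; the latter holds because $\{\kq_1,\dots,\kq_{k+1}\}$ is a clique in $R$, the reduced-graph pairs are $(\eps,d,p)_G$-regular with $\eps\le\eps_0\le\epsa_{0,0},\epsaa_0$, and Proposition~\ref{prop:subpairs}.

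For the inductive step assume $\phi$ has been built on the first $z$ vertices and let $w$ be the $(z{+}1)$-st vertex of $H'\setminus(T\cup\{x\})$ in the order $<_{\rho'}$. By \ref{item:claim-coverv-2} the candidate set $N_G(\Pi(w),C_w)$ has size at least $(d'/4)^{|\Pi(w)|}p^{|\Pi(w)|}|C_w|$. Only the sets $\Pi(u)$ with $u\in N_{H'}(w)$ change when we fix $\phi(w)$, so it suffices to choose $\phi(w)\in N_G(\Pi(w),C_w)$ avoiding the images $y$ that are \emph{bad}: those for which setting $\phi(w)=y$ would falsify one of \ref{item:claim-coverv-2}--\ref{item:claim-coverv-4prime} for some unembedded neighbour $u$ of $w$ (with $\Pi(u)$ replaced by $\Pi(u)\cup\{y\}$), together with the at most $v(H')$ images already used.

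The key is to bound the number of bad images. Fix an unembedded neighbour $u$ of $w$. Images bad for \ref{item:claim-coverv-3} resp.\ \ref{item:claim-coverv-3prime}: apply Proposition~\ref{prop:chernoff}(c) with the set $\NGa(\Pi(u),C_u)$ resp.\ $\NGa(\Pi(u),V_u)$, which by the inductive hypothesis has its expected size; all but $\bigO(p^{-1}\log n)$ choices of $y$ preserve the count. Images bad for \ref{item:claim-coverv-4}, \ref{item:claim-coverv-1prime} or \ref{item:claim-coverv-4prime}: the relevant pair is a pair of $\Gamma$-neighbourhoods which the inductive hypothesis certifies is regular with the appropriate parameter from the families $(\nua_{\cdot,\cdot})$, $(\epsa_{\cdot,\cdot})$, $(\epsaa_{\cdot})$ and the appropriate density $d'$ or $d$; fixing $\phi(w)=y$ replaces one side by its intersection with $\NGa(y)$ (if $w$ is adjacent to just one endpoint) or both sides by their intersections with $\NGa(y)$ (if $w$ is adjacent to both), so Lemma~\ref{lem:OSRIL} resp.\ Lemma~\ref{lem:TSRIL} shows that all but $\bigO(p^{-1}\log n)$ resp.\ $\bigO(\max\{p^{-2},p^{-1}\log n\})$ choices of $y$ keep the pair regular with the improved parameter demanded at level $|\Pi(u)|+1$---the three families were built by exactly these two lemmas, in decreasing order, for this purpose. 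Finally, images bad for \ref{item:claim-coverv-2} for $u$ form only a small \emph{fraction} of $N_G(\Pi(w),C_w)$: by \ref{item:claim-coverv-4} the pair $\big(\NGa(\Pi(u),C_u),\NGa(\Pi(w),C_w)\big)$ is $(\nua_{|\Pi(u)|,|\Pi(w)|},d',p)_G$-regular; restricting its first side to $N_G(\Pi(u),C_u)$---which by \ref{item:claim-coverv-2} and \ref{item:claim-coverv-3}, and $\nu_0\le\tfrac{d'}{2}$, fills at least a $(d'/6)^{|\Pi(u)|}$-fraction of it---via Proposition~\ref{prop:subpairs} and then invoking Proposition~\ref{prop:neighbourhood}, all but an $o(1)$-fraction of $y\in N_G(\Pi(w),C_w)$ satisfy $|N_G(y,N_G(\Pi(u),C_u))|\ge(d'-o(1))p\,|N_G(\Pi(u),C_u)|\ge(d'/4)^{|\Pi(u)|+1}p^{|\Pi(u)|+1}|C_u|$; the factor $\tfrac14$ hard-wired into \ref{item:claim-coverv-2} is precisely the slack that absorbs the $p$-density loss from these restrictions, which is why $\nua_{\Delta-1,\Delta-1}$ is chosen so small.

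It remains to check that the bad images number strictly fewer than $|N_G(\Pi(w),C_w)|$, and \textbf{this size bookkeeping is the main obstacle}, resting on the combinatorics set up on the preceding pages. The fractional bad sets from \ref{item:claim-coverv-2} contribute, say, at most half the candidate set. Every other bad set has size $\bigO(\max\{p^{-2},p^{-1}\log n\})$, so what one needs is $p^{|\Pi(w)|}|C_w|\gg p^{-2}$, and likewise $p^{|\Pi(u)|}|C_u|\gg p^{-2}$ for every $u$ (and endpoint of a pair) to which Lemma~\ref{lem:OSRIL}, Lemma~\ref{lem:TSRIL} or Proposition~\ref{prop:chernoff}(c) is applied. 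A selected cluster equal to some $W'_i$ lies inside $W\subseteq N_{G'}(v)$ and thus has size $\Theta(pn)$, whereas one equal to $V_{\kq_i,j_i}$ has size $\Theta(n)$; hence for a $W'$-cluster $C_u$ one must know that $|\Pi(u)|\le\Delta-3$ whenever regularity inheritance is needed for $u$. This is exactly what \eqref{eq:order} gives for the vertices of $U_{\rho'}$ (which retain three neighbours in colour classes $k-1,k,k+1$ that are embedded only afterwards), and for a vertex $u\in N(x)$ it follows because $x\in N_{H'}(u)$ is a neighbour never counted in $\Pi(u)$, so $u$ having any unembedded neighbour forces $|\Pi(u)|\le\deg_{H'}(u)-2\le\Delta-2$, improving to $\Delta-3$ at the step that adds $\phi(w)$; for $V$-clusters the crude bound $|\Pi(u)|\le\Delta-1$ already suffices since $p^{\Delta-1}n\gg p^{-2}$. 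Combining this with $p\ge\Ca(\log n/n)^{1/\Delta}$ and $\Ca=10^{10\Delta}d'^{-\Delta}\ell t_1 r\mu^{-1}$ gives, in every case, $p^{|\Pi(w)|}|C_w|\ge p^{\Delta-3}\cdot\Theta(pn)=\Theta(1)\cdot p^{-2}\cdot p^{\Delta}n\ge\Theta\big(\Ca^{\Delta}\log n\big)\cdot p^{-2}\gg\max\{p^{-2},p^{-1}\log n\}$, and the same lower bound holds for the $|\Pi(u)|$-sized sets intersected by the inheritance lemmas, so in particular their size preconditions are met. Hence a good image $\phi(w)\in N_G(\Pi(w),C_w)$ exists; choosing it completes the induction and thereby the proof of the claim. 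The novelty relative to~\cite{ABET} is exactly this argument: the vertices near $x$ must be placed into the small, $\Theta(pn)$-sized clusters of the fine partition inside $N_{G'}(v)$, and the order $<_{\rho'}$ and the recolouring $\rho'$ are engineered so that the candidate sets nevertheless always dominate the error terms produced by the regularity-inheritance lemmas.
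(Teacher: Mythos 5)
Your overall strategy is the paper's: induction along $<_{\rho'}$, base case from the choice of the clusters, and an inductive step in which one excludes, inside the candidate set $N_G(\Pi(w),C_w)$, the vertices that would violate \ref{item:claim-coverv-2}--\ref{item:claim-coverv-4prime} for unembedded neighbours, using Proposition~\ref{prop:chernoff}, Lemmas~\ref{lem:OSRIL} and~\ref{lem:TSRIL}, and a regularity/fraction argument for \ref{item:claim-coverv-2}. The gap is in the size bookkeeping that you yourself flag as the main obstacle. You assert that one needs, and has, $p^{|\Pi(w)|}|C_w|\gg p^{-2}$ ``in every case'', justified by ``$p^{|\Pi(w)|}|C_w|\ge p^{\Delta-3}\cdot\Theta(pn)$'' and, for $V$-clusters, by ``$p^{\Delta-1}n\gg p^{-2}$''. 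Both claims are false in the regime the lemma must cover, $p=\Theta\big((\log n/n)^{1/\Delta}\big)$: there $p^{\Delta-1}n=\Theta(p^{-1}\log n)\ll p^{-2}$ (since $p\ll 1/\log n$), and a vertex $w\in N(x)$ all of whose non-$x$ neighbours are already embedded can have $|\Pi(w)|=\Delta-1$ with $C_w=W'_{\rho'(w)}$ of size only $\Theta(pn)$, so its candidate set is merely $\Theta(p^{\Delta}n)=\Theta(\log n)$, nowhere near $p^{-2}$. So the uniform inequality on which your final accounting rests is simply not available.

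What makes the argument work --- and what the paper's proof does --- is that the candidate set only has to beat the bad sets that actually arise at that step, and these come in three tiers matched by \eqref{eq:coverv:none}, \eqref{eq:coverv:one} and \eqref{eq:coverv:two}. If $w$ has no unembedded neighbours, no Chernoff or inheritance exclusions occur at all, and $\Omega(\log n)$ candidates suffice. If $w$ has an unembedded neighbour but no two adjacent ones, only $O(p^{-1}\log n)$-size exclusions (Proposition~\ref{prop:chernoff}, Lemma~\ref{lem:OSRIL}) occur, and then $|\Pi(w)|\le\Delta-2$ for a $W'$-cluster (via the excluded $x$, or \eqref{eq:order}) resp.\ $|\Pi(w)|\le\Delta-1$ for the large $V$-clusters gives $\Omega(p^{-1}\log n)$ candidates. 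Only when $w$ has two \emph{adjacent} unembedded neighbours is Lemma~\ref{lem:TSRIL}, with its $O(\max\{p^{-2},p^{-1}\log n\})$ exclusions, needed, and exactly then those two neighbours (together with $x$ or \eqref{eq:order} in the $W'$ case) force $|\Pi(w)|\le\Delta-3$ resp.\ $\le\Delta-2$, giving $\Omega(p^{-2}\log n)$ candidates. The same case-matching, not a uniform $\gg p^{-2}$ bound, is what verifies the size preconditions of Lemmas~\ref{lem:OSRIL} and~\ref{lem:TSRIL} for the sets $\NGa(\Pi(u),C_u)$: when inheritance is invoked for a pair $u,u'$ at the step embedding $w$, both $w$ and $u'$ (and $x$, when $u\in N(x)$) are missing from $\Pi(u)$. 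Without this trichotomy your final estimate does not go through as written, although the ingredients you identify (the recolouring $\rho'$, \eqref{eq:order}, the special role of $x$, and the $W'$-versus-$V$ size distinction) are exactly the right ones.
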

\begin{claimproof}
We prove the claim inductively, starting with $z=0$ and $\phi$ the empty embedding. We first check that the claimed properties hold for this embedding. \ref{item:claim-coverv-right-cluster} is true vacuously. Since $\Pi(u)=\emptyset$ for each $u\in V(H')\setminus\{x\}$, the various neighbourhoods in $C_u$ and $C_{u'}$ are equal to $C_u$ and $C_{u'}$. So~\ref{item:claim-coverv-2} and~\ref{item:claim-coverv-3} hold trivially, and~\ref{item:claim-coverv-4} holds by choice of the $W'_i$ and by choice of $\nua_{0,0}$. Similarly,~\ref{item:claim-coverv-1prime} and~\ref{item:claim-coverv-4prime} hold because by choice of the $\kq_j$ the pair  $(V_{\kq_j},V_{\kq_{j'}})$ is $(\eps,d',p)$-regular for each $1\le j<j'\le k+1$, and~\ref{item:claim-coverv-3prime} holds trivially.

We now have to show the induction step holds; suppose that for some $0\le z<|V(H')\setminus T|-1$, the map $\phi$ is an embedding of the first $z$ vertices of $H'-(T\cup \{x\})$ satisfying the conclusion of Claim~\ref{claim:coverv}. Let $w$ be the $(z+1)$st vertex of $H'-(T\cup\{x\})$. We aim to show the existence of an embedding $\phi'$ extending $\phi$ satisfying the conclusion of Claim~\ref{claim:coverv} for $z+1$.

To do this, it is enough to show that, for each statement among~\ref{item:claim-coverv-2}--\ref{item:claim-coverv-4} and~\ref{item:claim-coverv-1prime}--\ref{item:claim-coverv-4prime} separately, the number of vertices in $N_G\big(\Pi(w),C_w\big)$ which cause the given statement to fail is small compared to $\big|N_G\big(\Pi(w),C_w\big)\big|$; then we choose a vertex $y$ in that set (so guaranteeing~\ref{item:claim-coverv-right-cluster}) which causes none of the statements to fail, and have the desired embedding $\phi\cup\{w\to y\}$. We therefore record some lower bounds on $\big|N_G\big(\Pi(w),C_w\big)\big|$.

Suppose $d(x,w)\ge2$ and $d(x,w)+\rho'(w)\le s+1$, or if $d(x,w)=1$ and $w$ has two neighbours in $H'-x$ which come after $w$ in $<_{\rho'}$. In the first case,  by~\eqref{eq:order}, we have $|\Pi(w)|\le\Delta-3$. In the second case, since $w$ has three neighbours in $H'$ which do not come before it in $<_{\rho'}$ (as $x$ is not in that order at all) we have $|\Pi(w)|\le\Delta-3$. In either case, by~\ref{item:claim-coverv-2}, we get
\begin{equation}\label{eq:coverv:two}
\big|N_G\big(\Pi(w),C_w\big)\big|\ge\big(\tfrac{d'}{4}\big)^{\Delta-3}p^{\Delta-3}|C_w|\ge\big(\tfrac{d'}{4}\big)^{\Delta-3}p^{\Delta-2}\cdot\tfrac{\mu n}{4\ell t_1}\ge 100C\Delta^2p^{-2}\log n\,,
\end{equation}
where the final inequality uses $p\ge C^*\big(\tfrac{\log n}{n}\big)^{1/\Delta}$ and the choice of $C^*$.
By a similar calculation, if either $d(x,w)=1$ and $w$ has a neighbour coming after in in $<_{\rho'}$, or $d(x,w)+\rho'(w)>s+1$ and $w$ has a neighbour coming after it in $<_{\rho'}$, we have
\begin{equation}\label{eq:coverv:one}
\big|N_G\big(\Pi(w),C_w\big)\big|\ge\big(\tfrac{d'}{4}\big)^{\Delta-1}p^{\Delta-1}\cdot\tfrac{\mu n}{4\ell t_1r}\ge 100C\Delta^2p^{-1}\log n\,.
\end{equation}
Finally, if either $d(x,w)=1$ or $d(x,w)+\rho'(w)>s+1$, we get
\begin{equation}\label{eq:coverv:none}
\big|N_G\big(\Pi(w),C_w\big)\big|\ge\big(\tfrac{d'}{4}\big)^{\Delta}p^{\Delta}\cdot\tfrac{\mu n}{4\ell t_1r}\ge 100C\Delta^2\log n\,.
\end{equation}

We now estimate the fraction of $\big|N_G\big(\Pi(w),C_w\big)\big|$ which causes each of the desired statements to fail. The statement~\ref{item:claim-coverv-2} can only fail for a neighbour $u$ of $w$, and then only if we choose $y\in N_G\big(\Pi(w),C_w\big)$ which has too few neighbours in $N_G\big(\Pi(u),C_u\big)$. But by~\ref{item:claim-coverv-4} these two sets are on either side of a $\big(\nu^*_{|\Pi(w)|,|\Pi(u)|},d',p)_G$-regular pair, and by~\ref{item:claim-coverv-2} and~\ref{item:claim-coverv-3} the latter covers more than a $\nu^*_{\Delta,\Delta}$-fraction of $\NGa\big(\Pi(u),C_u\big)$. So by regularity, at most $\nu^*_{\Delta,\Delta}|\NGa\big(\Pi(w),C_w\big)\big|$ vertices of $\big|N_G\big(\Pi(w),C_w\big)\big|$ can cause~\ref{item:claim-coverv-2} to fail for $u$. Using~\ref{item:claim-coverv-2} and~\ref{item:claim-coverv-3}, and summing over the at most $\Delta$ choices of $u$, we see that at most a $8^\Delta d'^{-\Delta}\Delta\nu^*_{\Delta,\Delta}$-fraction of $\big|N_G\big(\Pi(w),C_w\big)\big|$ cause~\ref{item:claim-coverv-2} to fail.

For~\ref{item:claim-coverv-3}, we note that embedding $w$ can only cause this statement to fail if $w$ has at least one neighbour in $H'$ coming after it in $<_{\rho'}$, and in this case by~\eqref{eq:coverv:two} and~\eqref{eq:coverv:one}, we have $\big|N_G\big(\Pi(w),C_w\big)\big|\ge 100C\Delta^2p^{-1}\log n$. Now a vertex $y\in N_G\big(\Pi(w),C_w\big)$ can only cause~\ref{item:claim-coverv-3} to fail if it has the wrong number of neighbours in $\NGa\big(\Pi(u),C_u\big)$ for some neighbour $u$ of $w$. Because the good event of Proposition~\ref{prop:chernoff} occurs, this happens for at most $Cp^{-1}\log n$ vertices, and summing over the at most $\Delta$ choices of $u$, we see that at most a $\tfrac{1}{100}$-fraction of  $\big|N_G\big(\Pi(w),C_w\big)\big|$ cause~\ref{item:claim-coverv-3} to fail.

For~\ref{item:claim-coverv-4}, we need to be a bit more careful. To start with, if there are no neighbours of $w$ coming after $w$ in $<_{\rho'}$, then no matter how we embed $w$ we cannot make \ref{item:claim-coverv-4} fail. Suppose first that there are neighbours of $w$ coming after $w$ in $<_{\rho'}$, but that no two such neighbours are adjacent. As above, by~\eqref{eq:coverv:two} and~\eqref{eq:coverv:one}, we have $\big|N_G\big(\Pi(w),C_w\big)\big|\ge 100C\Delta^2p^{-1}\log n$. By~\ref{item:claim-coverv-4}, a vertex $y\in N_G\big(\Pi(w),C_w\big)$ can only cause~\ref{item:claim-coverv-4} to fail for a given $u,u'$ if $u$ is a neighbour of $w$ and $u'$ is not, and $y$ is one of the at most $Cp^{-1}\log n$ vertices which fail to inherit regularity, as guaranteed by the good event of Lemma~\ref{lem:OSRIL}. Summing over the at most $\Delta^2$ choices of $u,u'$, we see that in this case at most a $\tfrac{1}{100}$-fraction of  $\big|N_G\big(\Pi(w),C_w\big)\big|$ cause~\ref{item:claim-coverv-4} to fail. The remaining case is that there are two adjacent neighbours of $w$ coming after $w$ in $<_{\rho'}$. In this case we need the good events of Lemmas~\ref{lem:OSRIL} and~\ref{lem:TSRIL}, and consequently for given $u,u'$ up to $Cp^{-2}\log n$ vertices might fail to inherit regularity. But in this case by~\eqref{eq:coverv:two} we have$ \big|N_G\big(\Pi(w),C_w\big)\big|\ge 100C\Delta^2p^{-2}\log n$, and again in this case at most a $\tfrac{1}{100}$-fraction of  $\big|N_G\big(\Pi(w),C_w\big)\big|$ cause~\ref{item:claim-coverv-4} to fail.

The proofs that at most a $\tfrac{1}{100}$-fraction of $\big|N_G\big(\Pi(w),C_w\big)\big|$ cause any one of~\ref{item:claim-coverv-1prime}--\ref{item:claim-coverv-4prime} are essentially identical, and we omit the details.

Summing up, by choice of $\nu^*_{\Delta,\Delta}$ and since $|V(H')|\le\sum_{i=0}^{s+1}\Delta^i$, we see that at least half of $\big|N_G\big(\Pi(w),C_w\big)\big|$ consists of vertices $y$ such that $\phi\cup\{w\to y\}$ satisfies the conclusions of Claim~\ref{claim:coverv} for $z+1$, completing the induction step and hence the proof of the claim.
\end{claimproof}
Now we can conclude the proof of Lemma~\ref{lem:coverv}. Given an embedding of $H'-(T\cup\{x\})$ satisfying the conclusions of Claim~\ref{claim:coverv}, we extend it to an embedding $\phi$ of $H'-T$ by setting $\phi(x) = v$. This is a valid embedding since we embedded all neighbours of $x$ to $W$, and we obtain~\ref{item:pel-v}.
Property~\ref{item:pel-clique} holds by choice of the $\kq_1,\dots,\kq_{k}$.
For every vertex $u$ in~$T$ we have that $C_u = V_{\kq_{\rho'(u)},j_{\rho'(u)}}$ and $|C_u| \ge |{V_{\kq_{\rho'(u)}}}\cap V(G')|/{2 t_1}$.
So by the choice of~$\zeta$,~\ref{item:pel-minsize} follows from~\ref{item:claim-coverv-2}.
The choice of constants ensures that the remaining statements in the lemma are a direct consequence of~\ref{item:claim-coverv-1prime}-\ref{item:claim-coverv-4prime}.
\end{proof}

\section{Proof of the main technical result}
\label{sec:mainproof}
The proof of Theorem~\ref{thm:maink} is broadly similar to the proof of~\cite[Theorem~23]{ABET}. Again, basically the idea is that we apply the lemmas of Section~\ref{sec:mainlemmas} in order to first find a well-behaved partition of $G$ and a corresponding partition of $H$. We then deal with the few badly-behaved vertices of $G$ by sequentially pre-embedding onto them some vertices of $H$ whose neighbourhoods contain at most $s$ colours. Lemma~\ref{lem:coverv} deals with this pre-embedding, and sets up for the vertices which are not pre-embedded but which have pre-embedded neighbours restriction sets in the sense of Definition~\ref{def:restrict}. We then adjust the partition of $H$ to fit this pre-embedding, and balance the partition of $G$ to match. Finally, we see that the conditions of Lemma~\ref{thm:blowup} are met, and that lemma completes the desired embedding of $H$ in $G$.

As in~\cite{ABET}, there are two slightly subtle points. The first is that for $\Delta=2$ we can have $Cp^{-2}>pn$, so that we should be worried that we come to some badly-behaved vertex of $G$ onto which we wish to pre-embed and discover that all its neighbours have already been used in pre-embedding. As in~\cite{ABET}, this is easy to handle: at each step we choose the badly-behaved vertex with most neighbours already embedded to. It is easy to check that this ordering avoids the above problem. The second, more serious, problem is that we need restriction sets fulfilling the conditions of Definition~\ref{def:restrict}. Although Lemma~\ref{lem:coverv} gives us pre-embeddings satisfying these conditions, we might destroy the conditions when we pre-embed later vertices. The condition we could destroy is simply that we need each restriction set to be reasonably large; the danger is that we pre-embed many vertices to some restriction set. The solution to this is (as in~\cite{ABET}) to select a set $S$, whose size is linear in $n$ but small, using Lemma~\ref{lem:hypgeo} to avoid large intersections with any possible restriction set. When we apply Lemma~\ref{lem:coverv} to cover a badly-behaved vertex $v$, we will pre-embed to $v$ and to some vertices chosen from $S$, and not to any other vertex. The badly-behaved vertices are not (by construction) in any restriction set, while $S$ has small intersection with all restriction sets, so that even removing all of $S$ would not make the restriction sets too small.

The only point in the proof where we really need to do more than in~\cite{ABET} (apart from using Lemma~\ref{lem:coverv} to pre-embed) is that we need to ensure the conditions of Lemma~\ref{lem:coverv} are met. When we wish to cover a badly-behaved $v$, its neighbourhood within the set $S$ must contain many copies of $K_s$. Further, some vertices of $S$ will have been used in earlier pre-embeddings, and we need to ensure that these used vertices do not hit too many of the copies of $K_s$. For this, we apply the sparse regularity lemma, Lemma~\ref{lem:SRLb}, to $G\big[N_G(v)\big]$ before choosing $S$. We will see that (since $N_G(v)$ contains many copies of $K_s$) we find a set of $s$ clusters in $N_G(v)$ such that all the pairs are relatively dense and regular. When we use Lemma~\ref{lem:hypgeo} to choose $S$, we also insist that $S$ contains a significant fraction of each of these clusters. The order in which we cover badly-behaved vertices ensures that a (slightly smaller but still) significant fraction of each cluster is not used by the previous pre-embedding; and we find the desired many copies of $K_s$ in $N_G(v)\cap S$ as a result.

As a final observation, Lemma~\ref{lem:coverv}~\ref{item:pel-minsize} gives us something which looks like an image restriction set suitable for Definition~\ref{def:restrict}---but it is a subset of $S$. A careful reader will see from the constant choices below that it is therefore too small for Lemma~\ref{thm:blowup}. However, the fact that $S$ is selected at random allows us to deduce the existence of a larger image restriction set which is suitable for Lemma~\ref{thm:blowup}.

\begin{proof}[Proof of Theorem~\ref{thm:maink}]
Given $\gamma>0$, we set $d^+=2^{-s-5}\gamma$ and
$\eps^+_{s-2}=16^{-s}(d^+)^{2s}/s$. For each $i=s-3,s-4,\ldots,0$
sequentially, let $0<\eps^+_i\le \eps^+_{i-1}$ be sufficiently small for
Lemma~\ref{lem:TSRIL} with input $d^+$ and $\eps^+_{i+1}$. Let
$\eps^+\le\eps^+_0$ be small enough for an application of
Lemma~\ref{lem:hypgeo} with input $d^+$ and $\eps^+_0$. Let $t_1^+$ be
returned by Lemma~\ref{lem:SRLb} for input $\eps^+$ and $\lceil
1/d^+\rceil$, and let $\alpha^+=\tfrac14d^+/t_1^+$. Let
$\gamma^+=2^{-4s^2}(d^+)^{-2s^2}(t_1^+)^{-s}$. Note we have $\gamma^+<\gamma$.

We now choose $d \le \frac{\gamma^+}{32}$ not larger than the $d$ given by Lemma~\ref{lem:G} for input~$\gamma$,~$k$ and~
$r_0:=10^5\gamma^{-1}$. We let $\alpha$ be the $\zeta$ returned by
Lemma~\ref{lem:coverv} for input $\Delta$, $k$, $s$, $\gamma^+$ and $d$. We
set $D=\Delta$ and let $\eBL$ be returned by Lemma~\ref{thm:blowup} for
input $\Delta$, $\Delta_{R'}=3k$, $\Delta_J=\Delta$,
$\vartheta=\tfrac{1}{100D}$, $\zeta=\tfrac{1}{4}\alpha$, $d$ and
$\kappa=64$. Next, putting $\eps^*:=\tfrac18\eBL$ into
Lemma~\ref{lem:coverv} (with earlier parameters as above) returns
$\eps_0>0$. We set $\eps=\min(\eps_0,d,\eps^*/4\Delta,1/100k)$, and set
$\eps^-\le\eps$ small enough for Lemma~\ref{lem:hypgeo} with input as above
and $d,\eps$. Now Lemma~\ref{lem:G}, for input $\eps^-$
and earlier constants as above, returns $r_1$. At last,
Lemma~\ref{lem:balancing}, for input $k$, $r_1$, $\Delta$, $\gamma$, $d$ and
$8\eps$, returns $\xi>0$. Without loss of generality, we may assume
$\xi<10(10kr_1)$, and set $\beta=10^{-12}\xi^2/(\Delta k^4 r_1^2)$. Let
$\mu=\eps^2/(100000kr_1)$. Next, suppose $\Ca$ is large enough for Lemma~\ref{lem:coverv}, and also to play the
r\^ole of $C$ in each of these other lemmas, and also for
Proposition~\ref{prop:chernoff} with input $\eps$, for
Lemma~\ref{lem:TSRIL} with input $d^+$ and each of $\eps^+_i$ for
$i=1,\ldots,s-2$, and for Lemma~\ref{lem:hypgeo} with input $\eps\mu^2$,
$\eps$, $\min(d,d^+)$ and $\Delta$.

We set $C=10^{100}k^2 r_1^2 \eps^{-2}\xi^{-1}\Delta^{1000k^3}\mu^{-\Delta}\Ca$ and $z=10/\xi$. Given $p\ge C\big(\tfrac{\log}{n}\big)$,
a.a.s.\ $\Gamma=G(n,p)$ satisfies the good events of each of the lemmas and
propositions listed above with each of the specified inputs.

In addition, for each set $W$ of at most $\Delta$ vertices of $G(n,p)$, the
size of the common neighbourhood $N_{G(n,p)}(W)$ is distributed as a
binomial random variable with mean $p^{|W|}(n-|W|)$. By
Theorem~\ref{thm:chernoff}, the probability that the outcome is
$(1\pm\eps)p^{|W|}n$ is at least $1-n^{-(\Delta+1)}$ for sufficiently large $n$. By the union bound, we
conclude that a.a.s.\ $G(n,p)$ satisfies
\begin{equation}\label{eq:nolargedegs}
 \text{for each $W\subset V\big(G(n,p)\big)$ with $|W|\le\Delta$ we have }\big|N_{G(n,p)}(W)\big|=(1\pm\eps)p^{|W|}n\,.
\end{equation}

Suppose that $\Gamma=G(n,p)$ satisfies these good events. Let $G$ be a spanning subgraph of $\Gamma$ such that $\delta(G)\ge\big(\tfrac{k-1}{k}+\gamma\big)pn$ and such that for each $v\in V(G)$ the neighbourhood $N_G(v)$ contains at least $\delta p^{\binom{s}{2}}(pn)^s$ copies of $K_s$. Let $H$ be a graph on $n$ vertices with $\Delta(H)\le\Delta$. Let $\sigma$ be a proper colouring of $V(H)$ using colours $\{0,\dots,k\}$, and let $\cL$ be a labelling of $V(H)$ with bandwidth at most $\beta n$ with the following properties. The colouring $\sigma$ is $(z,\beta)$-zero-free with respect to $\cL$, the first $\sqrt{\beta}n$ vertices of $\cL$ do not use the colour zero, and the first $\beta n$ vertices of $\cL$ contain $Cp^{-2}$ vertices whose neighbourhood contains only $s$ colours.

We now claim that for each $v\in V(G)$ we can find $s$ large subsets of $N_G(v)$ all pairs of which are dense and regular in $G$. This forms a `robust witness' that each vertex neighbourhood in $G$ contains many copies of $K_{s}$.
\begin{claim}
For each $v\in V(G)$, there exist sets $Q_{v,1},\dots,Q_{v,s}\subset N_G(v)$ each of size at least $\alpha^+pn$ such that for each $i<j$ the pair $(Q_{v,i},Q_{v,j})$ is $(\eps^+,d^+,p)$-regular in $G$.
\end{claim}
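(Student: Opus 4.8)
The plan is to apply the refining sparse regularity lemma (Lemma~\ref{lem:SRLb}) to the graph $G[N_G(v)]$, and then to use the hypothesis that $N_G(v)$ contains at least $\gamma p^{\binom s2}(pn)^s$ copies of~$K_s$ to locate $s$ clusters of the resulting partition which are pairwise dense and $(\eps^+,d^+,p)_G$-regular; these will be the sets $Q_{v,1},\dots,Q_{v,s}$.

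First I would record the crude bounds $\tfrac12pn\le|N_G(v)|\le 2pn$: the lower bound is $\delta(G)\ge\big(\tfrac{k-1}{k}+\gamma\big)pn$, and the upper bound follows from~\eqref{eq:nolargedegs} applied to $W=\{v\}$. Set $q:=\lceil 1/d^+\rceil$ and partition $N_G(v)$ into $q$ parts of almost equal size. Since $p\ge C\big(\tfrac{\log n}{n}\big)^{1/\Delta}\ge C\big(\tfrac{\log n}{n}\big)^{1/2}$, each such part has size at least $\Ca p^{-1}\log n$, so by Proposition~\ref{prop:chernoff} the edge hypotheses of Lemma~\ref{lem:SRLb} hold for this partition. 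Applying Lemma~\ref{lem:SRLb} with input $\eps^+$ and $q$ --- which is precisely the input for which $t_1^+$ was chosen --- yields exceptional sets of total size less than $\eps^+|N_G(v)|$, together with an equitable $(\eps^+,p)$-regular refinement into $M=qt$ clusters $U_1,\dots,U_M$ with $q\le M\le qt_1^+$. Using $|N_G(v)|\ge\tfrac12pn$ and $M\le qt_1^+$, each cluster has size at least $\alpha^+pn$, as required by the statement.

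Next I would introduce the auxiliary graph $R^+$ on the vertex set $[M]$, putting $\{a,b\}\in E(R^+)$ whenever $(U_a,U_b)$ is $(\eps^+,0,p)$-fully-regular and has $p$-density at least $2d^+$; since $\eps^+\le d^+$, every such pair is $(\eps^+,d^+,p)_G$-regular, so it suffices to show $R^+$ contains a copy of~$K_s$. Suppose not. Delete from $G[N_G(v)]$ every edge that meets an exceptional vertex, or lies inside a single cluster, or lies in an irregular pair of clusters, or lies in a pair of $p$-density less than~$2d^+$, and call the result $G^\ast$. Any copy of $K_s$ in $G^\ast$ would have its $s$ vertices in $s$ distinct clusters, all of whose pairs lie in $E(R^+)$, hence would project to a copy of $K_s$ in $R^+$ --- impossible. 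Thus $G^\ast$ is $K_s$-free, and every copy of $K_s$ in $G[N_G(v)]$ uses at least one deleted edge.

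It then remains to count. Using Proposition~\ref{prop:chernoff}, the bound of Lemma~\ref{lem:SRLb} on the number of irregular pairs, the size bounds above, and --- crucially --- the codegree bound $|N_\Gamma(w)\cap N_\Gamma(v)|=(1\pm\eps)p^2n$ from~\eqref{eq:nolargedegs} to control edges at exceptional vertices, the four kinds of deleted edges number in total at most $\bigO(d^+ + \eps^+)p^3n^2$; here it is the factor $1/d^+$ in the definition of $q$ (hence $M\ge1/d^+$) that makes the within-cluster contribution $\bigO(d^+)p^3n^2$ rather than $\Omega(p^3n^2)$. Moreover every edge of $\Gamma$ with both ends in $N_\Gamma(v)$ extends to at most $2p^{\binom s2-1}(pn)^{s-2}$ copies of $K_s$ inside $N_\Gamma(v)$, obtained by choosing the remaining $s-2$ vertices one at a time in iterated common neighbourhoods and applying~\eqref{eq:nolargedegs} at each step (this is the one place where $s\le\Delta$ is used). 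Multiplying, $N_G(v)$ contains at most $\bigO(d^+ + \eps^+)p^{\binom s2}(pn)^s$ copies of $K_s$, which by $d^+=2^{-s-5}\gamma$ and the choice of $\eps^+$ sufficiently small is strictly less than $\gamma p^{\binom s2}(pn)^s$ --- contradicting the hypothesis on~$G$. Hence $R^+$ contains a copy of $K_s$, and taking $Q_{v,i}$ to be its clusters finishes the proof. I expect the main obstacle to be precisely this final accounting: making the four edge-count contributions and the per-edge count of $K_s$'s combine to the correct order $p^{\binom s2}(pn)^s$ (it is easy to let a spurious factor of $p^{-1}$ creep in), together with the routine care needed in passing between fully-regular and lower-regular pairs and in pinning down the constant in $|U_a|\ge\alpha^+pn$.
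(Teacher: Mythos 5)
Your proposal is correct and follows essentially the same route as the paper: apply Lemma~\ref{lem:SRLb} to $G[N_G(v)]$ with an initial equipartition into $\lceil 1/d^+\rceil$ parts, and then, assuming no $s$ pairwise regular dense clusters exist, delete the edges inside clusters, at exceptional vertices, in irregular pairs and in sparse pairs, and combine the resulting $\bigO(d^++\eps^+)p^3n^2$ edge count with the codegree bound~\eqref{eq:nolargedegs} on per-edge $K_s$-extensions to contradict the hypothesis that $N_G(v)$ contains at least $\gamma p^{\binom{s}{2}}(pn)^s$ copies of $K_s$. Your minor variations (density threshold $2d^+$, an explicit auxiliary reduced graph $R^+$, and the codegree bound in place of Proposition~\ref{prop:chernoff} for edges at exceptional vertices) are immaterial and the constants still work out with $d^+=2^{-s-5}\gamma$.
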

\begin{claimproof}
 We apply Lemma~\ref{lem:SRLb} with input $\eps^+$ and $\lceil 1/d^+\rceil$ to $G\big[N_G(v)\big]$, with an arbitrary equipartition into $\lceil 1/d^+\rceil$ sets as an initial partition. Note that the conditions of Lemma~\ref{lem:SRLb} are satisfied because the good event of Proposition~\ref{prop:chernoff} holds. We obtain an $(\eps,p)$-regular partition of $N_G(v)$ whose non-exceptional parts are of size between $\alpha^+pn$ and $8\alpha^+pn$, by choice of $\alpha^+$ and since $\big|N_G(v)\big|>\tfrac12pn$. If there exist $s$ parts in this partition all pairs of which form $(\eps^+,d^+,p)$-regular pairs, then these parts form the desired $Q_{v,1}$,\dots,$Q_{v,s}$. So we may assume for a contradiction that no such $s$ parts exist. It follows that when we delete all edges within parts, meeting the exceptional sets, in irregular pairs, and in pairs of density less than $d^+p$, we remove all copies of $K_s$ from $G\big[N_G(v)\big]$.
 
 The total number of such edges is, since the good event of Proposition~\ref{prop:chernoff} holds, at most
 \begin{align*}
  (d^+)^{-1}\cdot 8p^3n^2(d^+)^{2}+2p(2\eps^+pn)(2pn)+4\eps^+p^3n^2+4d^+p^3n^2&\le (12\eps^++12d^+)p^3n^2\\
  &\le 2^{-s}\gamma p^3n^2\,,
 \end{align*}
 where the final inequality is by choice of $d^+$ and $\eps^+$.
 We now estimate simply how many copies of $K_{s+1}$ a given edge $e$,
 together with $v$, can make in $\Gamma$. Since by~\eqref{eq:nolargedegs} any $\ell$-tuple of vertices of $\Gamma$ has at most $2p^\ell n$ common neighbours, the number of copies of $K_4$ containing $e$ and $v$ is at most $2p^3n$, and inductively the number of copies of $K_{s+1}$ containing $e$ and $v$ is at most
 \[\prod_{\ell=3}^{s}2p^\ell n=2^{s-2}p^{\binom{s+1}{2}-3}n^{s-2}\,.\]
 Putting these estimates together we see that the total number of copies of $K_s$ in $G\big[N_G(v)\big]$ is at most $\tfrac12\gamma p^{\binom{s+1}{2}}n^{s}$. This is the desired contradiction, completing the proof. 
\end{claimproof}

We apply Lemma~\ref{lem:G} to $G$, with input $\gamma$, $k$, $r_0$ and $\eps^-$, to obtain an integer $r$ with $10\gamma^{-1}\le kr\le r_1$, a set $V_0\subset V(G)$ with $|V_0|\le C^\ast p^{-2}$, a $k$-equitable partition $\cV=\big\{V_{i,j}\big\}_{i\in[r],j\in[k]}$ of $V(G)\setminus V_0$, and a graph $R^k_r$ on $[r]\times[k]$ with minimum degree $\delta(R^k_r)\ge\big(\tfrac{k-1}{k}+\tfrac{\gamma}{2}\big)kr$, such that $K^k_r\subset B^k_r\subset R^k_r$ and such that the following hold.
\begin{enumerate}[label=\itmarabp{G}{a}]
\item\label{main:Gsize} $\frac{n}{4kr}\leq |\Vij| \leq \frac{4n}{kr}$ for every $i\in[r]$ and $j\in[k]$,
\item\label{main:Greg} $\cV$ is $(\eps^-,d,p)_G$-regular on $R^k_r$ and $(\eps^-,d,p)_G$-super-regular on $K^k_r$,
\item\label{main:Ginh} both $\big(\NGa(v, V_{i,j}),V_{i',j'}\big)$ and $\big(\NGa(v, V_{i,j}),\NGa(v, V_{i',j'})\big)$ are $(\eps^-, d,p)_G$-regular pairs for every $\{(i,j),(i',j')\} \in E(R^k_r)$ and $v\in V\setminus V_0$, and
\item\label{main:Ggam} $|\NGa(v,V_{i,j})| = (1 \pm \eps)p|\Vij|$ for every $i \in [r]$, $j\in [k]$ and every $v \in V \setminus V_0$.
\end{enumerate}

Given $i\in[r]$, because $\delta(R^k_r)>(k-1)r$, there exists $v\in V(R^k_r)$ adjacent to each $(i,j)$ with $j\in[k]$. This, together with our assumptions on $H$, allow us to apply Lemma~\ref{lem:H2} to $H$, with input $D$, $k$, $r$, $\tfrac{1}{10}\xi$ and $\beta$, and with $m_{i,j}:=|V_{i,j}|+\tfrac{1}{kr}|V_0|$ for each $i\in[r]$ and $j\in[k]$, choosing the rounding such that the $m_{i,j}$ form a $k$-equitable integer partition of $n$. Since $\Delta(H)\le\Delta$, in particular $H$ is $\Delta$-degenerate. Let $f\colon V(H) \to [r] \times [k]$ be the mapping returned by Lemma~\ref{lem:H2}, let $W_{i,j} := f^{-1}(i,j)$, and let $X \subseteq V(H)$ be the set of special vertices returned by Lemma~\ref{lem:H2}. For every $i\in [r]$ and $j\in [k]$ we have
\begin{enumerate}[label=\itmarabp{H}{a}] 
\item\label{H:size} $m_{i,j} - \tfrac{1}{10}\xi n  \leq |W_{i,j}| \leq m_{i,j} + \tfrac{1}{10}\xi n$,
\item\label{H:sizeX} $|X| \leq \xi n$,
\item\label{H:edge} $\{f(x),f(y)\} \in E(R^k_r)$  for every $\{x,y\} \in E(H)$,
\item\label{H:special} $y,z\in \bigcup_{j'\in[k]}f^{-1}(i,j')$ for every $x\in f^{-1}(i,j)\setminus X$ and $xy,yz\in E(H)$, and
\item\label{H:v1} $f(x)=\big(1,\sigma(x)\big)$ for every $x$ in the first $\sqrt{\beta}n$ vertices of $\mathcal{L}$.
\end{enumerate}
We let $F$ be the first $\beta n$ vertices of $\mathcal{L}$. By definition of $\mathcal{L}$, in $F$ there are at least $C p^{-2}$ vertices whose neighbourhood in $H$ receives at most $s$ colours from $\sigma$.

Next, we apply Lemma~\ref{lem:hypgeo}, with input $\eps\mu^2$ and $\Delta$, to choose a set $S\subset V(G)$ of size $\mu n$. We let the $T_i$ of Lemma~\ref{lem:hypgeo} be all sets which are common neighbourhoods in $\Gamma$ of at most $\Delta$ vertices of $\Gamma$, and all sets which are common neighbourhoods in $G$ of at most $\Delta$ vertices of $\Gamma$ into any set of $\cV$, together with the sets $V_{i,j}$ for $i\in[r]$ and $j\in[k]$, and the sets $Q_{v,i}$ for $v\in V(G)$ and $i\in[s]$. We let the regular pairs $(X_i,Y_i)$ of Lemma~\ref{lem:hypgeo} be the pairs $(Q_{v,i},Q_{v,j})$ for $1\le i<j\le s$ and $v\in V(G)$, and all regular pairs $(V_{i,j},V_{i',j'})\in R^k_r$.

The result of Lemma~\ref{lem:hypgeo} is that for any $1\le\ell\le\Delta$, any $V\in\cV$, and any vertices $u_1,\dots,u_\ell$ of $V(G)$, we have
\begin{equation}\label{eq:intS}
 \begin{split}
  \Big|S\cap\bigcap_{1\le i\le\ell}N_\Gamma(u_i)\Big|&=(1\pm\eps\mu)\mu\Big|\bigcap_{1\le i\le\ell}N_\Gamma(u_i)\Big|\pm \eps\mu p^\ell n\,,\\
  \Big|S\cap V\cap \bigcap_{1\le i\le\ell}N_G(u_i)\Big|&=(1\pm\eps\mu)\mu\Big|V\cap\bigcap_{1\le i\le\ell}N_G(u_i)\Big|\pm \tfrac{\eps\mu p^\ell n}{4kr}\,,\quad \text{and}\\
  \big|S\cap V_{i,j}\big|&=(1\pm\tfrac12\eps)\mu|V_{i,j}|\quad\text{for each $i\in[r]$ and $j\in[k]$,}
 \end{split}
\end{equation}
where we use the fact $p\ge C\big(\tfrac{\log n}{n}\big)^{1/\Delta}$ and choice of $C$ to deduce $\Ca\log n<\tfrac{\eps\mu p^\Delta n}{4kr}$. Furthermore, for each $v\in V(G)$ and $1\le i<j\le s$ the pair $\big(Q_{v,i}\cap S,Q_{v,j}\cap S\big)$ is $\big(\eps^{+}_0,d^+,p)$-regular in $G$, and for each $(V_{i,j},V_{i',j'})\in R^k_r$ the pair $\big(V_{i,j}\cap U,V_{i',j'}\cap U\big)$ is $(\eps,d,p)$-regular in $G$.

Our next task is to create the pre-embedding that covers the vertices of
$V_0$. We use the following algorithm, starting with $\phi_0$ the empty
partial embedding.

\begin{algorithm}
\caption{Pre-embedding}
\label{alg:pre}
 $t:=0$ \;
 \While{$V_0\setminus\im(\phi_t)\neq\emptyset$}{
  \lnl{line:choosev} Let $v_{t+1}\in V_0\setminus\im(\phi_t)$ maximise $\big|N_G(v)\cap S\cap\im(\phi_t)\big|$ over $v\in V_0\setminus\im(\phi_t)$ \;
  Choose $x_{t+1}\in F$ such that $\big|\sigma\big(N_H(x)\big)\big|\le s$ and $\dist\big(x_{t+1},\Dom(\phi_t)\big)\ge 100k^2$ \;
  $H_{t+1}:= H \left[\big\{y\in V(H):\dist(x_{t+1},y)\le s+1\big\}\right]$ \;
  Let $G'_{t+1}$ be the maximum subgraph of
  $G\big[(S\cup\{v_{t+1}\})\setminus\im(\phi_t)\big]$ \break\mbox{}\hspace{7cm} with minimum degree $\left(\tfrac{k-1}{k}+\tfrac{\gamma}{4}\right)\mu pn$ \;
  Let $\phi$ and $q_1,\dots,q_k$ be given by Lemma~\ref{lem:coverv} with input $G'_{t+1}$, $H'_{t+1}$ and colouring $\sigma|_{V(H')}$ \;
  $\phi_{t+1}:=\phi_t\cup\phi$ \;
  \ForEach{$y\in H_{t+1}$ such that $\dist(x_{t+1},y)=s+1$}{
   Let $f^{**}(y):=\kq_{\sigma(y)}$ \;
   Let $J_y:=\phi\big(\Dom(\phi)\cap N_H(y)\big)$ \;
   Let $I'_y:=N_{G}(J_y)\cap V_{\kq_{\sigma(y)}}\cap V(G'_{t+1})$ \;
  }
  $t:=t+1$ \;
 }
\end{algorithm}

Suppose this algorithm does not fail, terminating with $t=t^*$ and with a final embedding $\phi:=\phi_{t^*}$. Let $H'=H\setminus\Dom(\phi)$. Then $\phi$ is an embedding of $H\big[V(H)\setminus V(H')\big]$ into $V(G)$ which covers $V_0$ and is contained in $V_0\cup S$. The algorithm in addition defines $f^{**}(y)\in R^k_r$, $J_y\subset S$ and $I'_y\subset S$ for each $y\in V(H')$ which has $H$-neighbours in $\Dom(\phi)$. The meanings of these are as follows. When we apply the sparse blow-up lemma, we will embed $y$ to the cluster $V_{f^{**}(y)}$. We will need to image restrict $y$ (as in Definition~\ref{def:restrict}), and the image restricting vertices will be $J_y$. The set $I'_y$ will \emph{not} be the image restriction we use, but we will deduce the existence of a suitable image restriction from $I'_y$. Before we explain this, we first claim that the algorithm does not fail, and the requirements of Lemma~\ref{lem:coverv} are met at each iteration.

\begin{claim} Algorithm~\ref{alg:pre} does not fail, and the conditions of Lemma~\ref{lem:coverv} are met at each iteration.
\end{claim}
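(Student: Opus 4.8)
The plan is to run a single induction on the iteration counter $t$: assuming Algorithm~\ref{alg:pre} has not failed through step $t$, I show that at step $t+1$ a vertex $x_{t+1}$ with the stated properties exists and that the tuple $(H'_{t+1},\sigma|_{V(H'_{t+1})},G,G'_{t+1},v_{t+1})$ fed into Lemma~\ref{lem:coverv} satisfies that lemma's hypotheses (P1)--(P5), with its $\gamma$ instantiated as $\gamma^+$, its $r$ as $kr$, its reduced graph as $R:=R^k_r$ (re-indexed on $[kr]$), its $\mu$ as $\mu$, and its $\eps$ as a value just above our $\eps$. Since (P4) in particular forces $v_{t+1}\in V(G'_{t+1})$ with $G'_{t+1}\neq\varnothing$, this makes Lemma~\ref{lem:coverv} applicable and the step succeeds. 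Throughout I use that there are at most $t^\ast\le|V_0|\le\Ca p^{-2}$ steps and that each step adds at most $\sum_{i\le s}\Delta^i\le 2\Delta^s$ vertices to $\Dom(\phi)$ and to $\im(\phi)\cap S$, so $|\Dom(\phi_t)|,\ |\im(\phi_t)\cap S|\le 2\Delta^s\Ca p^{-2}$.

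Next I would dispatch the routine hypotheses. For the choice of $x_{t+1}$: $F$ contains $Cp^{-2}$ vertices whose $H$-neighbourhood gets at most $s$ colours from $\sigma$, while at most $|\Dom(\phi_t)|\cdot 2\Delta^{100k^2}\le 4\Delta^{s+100k^2}\Ca p^{-2}<Cp^{-2}$ vertices of $H$ lie within $H$-distance $100k^2$ of $\Dom(\phi_t)$ (the inequality uses the factor $\Delta^{1000k^3}$ in $C$), so a valid $x_{t+1}$ exists; since $100k^2>2(s+1)+1$ the ball $H'_{t+1}$ is vertex- and edge-disjoint from $\Dom(\phi_t)$, so $\phi_{t+1}=\phi_t\cup\phi$ is again a partial embedding. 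Hypothesis (P1) is immediate from $\Delta(H)\le\Delta$ and the definition of $H'_{t+1}$ (shortest paths to $x_{t+1}$ stay in the ball). For (P2): by bandwidth at most $\beta n$ and $x_{t+1}$ being among the first $\beta n$ vertices of $\cL$, every vertex of $H'_{t+1}$ lies among the first $(s+2)\beta n\le\sqrt\beta n$ vertices of $\cL$, where $\sigma$ avoids colour $0$; hence $\sigma|_{V(H'_{t+1})}$ is a proper $k$-colouring, $|\sigma(N_{H'_{t+1}}(x_{t+1}))|=|\sigma(N_H(x_{t+1}))|\le s$, and $T$ is as defined in the algorithm. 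Hypothesis (P3) is essentially the output of Lemma~\ref{lem:G}: $\delta(G)\ge(\tfrac{k-1}k+\gamma)pn\ge(\tfrac{k-1}k+\gamma^+)pn$, the partition $\{V_{i,j}\}$ with exceptional set $V_0$ is $(\eps^-,p)$-regular, hence $(\eps,p)$-regular, with $(\eps,d,p)$-reduced graph $R^k_r$, cluster sizes obey \ref{main:Gsize}, and $kr\ge r_0$ is large enough.

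For (P4): from \eqref{eq:intS} (second line, summed over clusters), the smallness of $V_0$, and $\delta(G)\ge(\tfrac{k-1}k+\gamma)pn$, every vertex $w$ of $G[(S\cup\{v_{t+1}\})\setminus\im(\phi_t)]$ has $|N_G(w)\cap S|\ge(\tfrac{k-1}k+\tfrac\gamma2)\mu pn$. When $\Delta\ge3$ we have $|\im(\phi_t)\cap S|=o(\mu pn)$, so no vertex is peeled, $G'_{t+1}=G[(S\cup\{v_{t+1}\})\setminus\im(\phi_t)]$, $|V(G'_{t+1})|=(1\pm\eps)\mu n$, $\delta(G'_{t+1})\ge(\tfrac{k-1}k+\tfrac\gamma3)\mu pn\ge(\tfrac{k-1}k+\gamma^+)p|V(G'_{t+1})|$; the bound $|N_{G'_{t+1}}(W)|\le 2\mu np^{|W|}$ for $|W|\le\Delta$ follows from \eqref{eq:intS} (first line) and \eqref{eq:nolargedegs}; $|V_{i,j}\cap V(G'_{t+1})|=(1\pm\eps)\mu|V_{i,j}|$ follows from \eqref{eq:intS} (third line); and the pairs $(V_{i,j}\cap S,V_{i',j'}\cap S)$ for edges of $R^k_r$ are $(\eps,d,p)$-regular in $G$ (as recorded after \eqref{eq:intS}) and stay $(\eps_0,d,p)$-regular after removing the $o(\mu|V_{i,j}|)$ used vertices (Proposition~\ref{prop:subpairs}), so putting $v_{t+1}$ into the exceptional set yields an $(\eps,p)$-regular partition of $G'_{t+1}$ with reduced graph $R^k_r$.

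The real work is (P5). By the Claim proved just before Algorithm~\ref{alg:pre} there are $Q_{v_{t+1},1},\dots,Q_{v_{t+1},s}\subseteq N_G(v_{t+1})$, each of size $\ge\alpha^+pn$, pairwise $(\eps^+,d^+,p)$-regular in $G$; since each $Q_{v_{t+1},i}$ is one of the $T_j$ and each pair one of the $(X_j,Y_j)$ in our call to Lemma~\ref{lem:hypgeo}, the sets $Q_{v_{t+1},i}\cap S$ have size $\ge\tfrac12\alpha^+\mu pn$, lie in $N_G(v_{t+1})\cap S$, and are pairwise $(\eps^+_0,d^+,p)$-regular in $G$. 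Deleting from each the vertices of $\im(\phi_t)$ (and, when $\Delta=2$, those peeled in forming $G'_{t+1}$) keeps them a positive fraction of their size, hence pairwise regular (Proposition~\ref{prop:subpairs}), \emph{provided} only a small fraction of $N_G(v_{t+1})\cap S$ is used or peeled; an iterated application of Proposition~\ref{prop:neighbourhood} then yields at least $\gamma^+p^{\binom{s+1}2}(\mu n)^s$ copies of $K_s$ in $N_{G'_{t+1}}(v_{t+1})$, by the choice of $\gamma^+$ in terms of $d^+,\alpha^+,t_1^+$. The crux is this proviso --- bounding $|N_G(v_{t+1})\cap S\cap\im(\phi_t)|$ and the peeling by a small fraction of $\alpha^+\mu pn$. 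For $\Delta\ge3$ it is immediate, since $|\im(\phi_t)\cap S|=o(pn)$ and no peeling happens. For $\Delta=2$, where $\Ca p^{-2}$ may exceed $pn$, it is not, and this is exactly what the ordering rule in line~\ref{line:choosev} of Algorithm~\ref{alg:pre} --- always covering the not-yet-covered vertex of $V_0$ with the most already-embedded neighbours --- is there to ensure; the argument that this ordering leaves $v_{t+1}$, its surviving $K_s$-witness, and enough of the partition of $G'_{t+1}$ undisturbed is carried out as in the corresponding step of~\cite{ABET}. Completing this $\Delta=2$ estimate is the main obstacle.
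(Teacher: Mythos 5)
Your overall plan matches the paper's proof (induction over iterations, routine verification of the first four hypotheses of Lemma~\ref{lem:coverv} via~\eqref{eq:intS}, \eqref{eq:nolargedegs} and Proposition~\ref{prop:subpairs}, with the real content being the ordering estimate and the $K_s$-count), but two essential steps are missing. First, the estimate you explicitly defer --- that the greedy rule in line~\ref{line:choosev} keeps $\big|N_G(v_{t+1})\cap S\cap\im(\phi_t)\big|$ below a small fraction of $\alpha^+\mu pn$ --- is precisely the heart of the claim, and the paper proves it in a few lines rather than citing \cite{ABET}: if the vertex $v$ chosen at time $t$ had more than $\tfrac12\alpha^+\mu pn$ used $S$-neighbours, then at each of the preceding $\tfrac14\Delta^{-s-2}\alpha^+\mu pn$ iterations it already had more than $\tfrac14\alpha^+\mu pn$ of them, so by maximality every vertex chosen at those iterations had at least $\tfrac14\alpha^+\mu pn$ neighbours inside $\im(\phi_t)$, a set of size at most $\Ca\Delta^{s+2}p^{-2}$, which one pads to a set $Z$ of size $\Ca p^{-1}\log n$ still satisfying $2p|Z|<\tfrac14\alpha^+\mu pn$; Proposition~\ref{prop:chernoff} allows at most $\Ca p^{-1}\log n$ vertices with so many neighbours in $Z$, which is fewer than the number of iterations considered --- a contradiction. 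This bound is then also what guarantees that $v$ itself survives the degree-peeling that defines $G'_{t+1}$ (and the paper bounds the number of peeled vertices by $\tfrac18\alpha^+\mu pn$ for all $\Delta$ by a similar Proposition~\ref{prop:chernoff} contradiction, rather than splitting into cases $\Delta=2$ versus $\Delta\ge3$ as you do). Leaving this out leaves the statement unproved exactly in the regime ($\Delta=2$, $\Ca p^{-2}>pn$) that the algorithm's ordering was introduced to handle.

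Second, your route to the copies of $K_s$ in $N_{G'_{t+1}}(v_{t+1})$ --- ``an iterated application of Proposition~\ref{prop:neighbourhood}'' after restoring regularity of the pairs $(Q_{v,i}\cap S,Q_{v,j}\cap S)$ via Proposition~\ref{prop:subpairs} --- does not work once $s\ge 3$. After the first witness vertex $w_1$ is chosen, the candidate sets for the remaining vertices are common $G$-neighbourhoods inside the $Q_{v,j}$, i.e.\ only a $\Theta(p^i)$-fraction of those clusters, and Proposition~\ref{prop:subpairs} gives nothing for subsets that are a vanishing fraction of the cluster; lower-regularity simply is not inherited by such sparse slices. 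This is why the proof of Theorem~\ref{thm:maink} fixes the cascade $\eps^+_{s-2}\ge\dots\ge\eps^+_0$ through Lemma~\ref{lem:TSRIL} at the very start, and the paper's count proceeds by choosing $w_1,\dots,w_s$ one at a time while maintaining, via the good event of Lemma~\ref{lem:TSRIL}, that the pairs of common $\Gamma$-neighbourhoods in the remaining $Q_{v,j}$ stay $(\eps^+_i,d^+,p)$-regular, discarding at each step at most $s\Ca p^{-1}\log n$ vertices for degree reasons and at most $s^2\Ca p^{-2}\log n$ for inheritance failures (which is also why the candidate sets must be of size much larger than $p^{-2}\log n$). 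Without this inheritance step the claimed bound of $\gamma^+p^{\binom{s+1}{2}}(\mu n)^s$ copies of $K_s$ is not justified except when $s=2$.
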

\begin{claimproof}
Observe that in total we embed at most $\Delta^{s+2}$ vertices in each iteration, and the number of iterations is at most $|V_0|\le \Ca p^{-2}$, so that the total number of vertices we embed is at most $\Ca\Delta^{s+2}p^{-2}$.

We begin by discussing the choice of $v_{t+1}$. Suppose that at some time
$t$ we pick a vertex $v=v_{t+1}$ such that $\big|N_G(v)\cap
S\cap\im(\phi_t)\big|>\tfrac12\alpha^+\mu pn$. For each
$t-\tfrac14\Delta^{-s-2}\mu\alpha^+ pn\le t'<t$, we have $\big|N_G(v)\cap
S\cap\im(\phi_{t'})\big|>\tfrac14\alpha^+\mu pn$, yet at each of these
times $v$ is not picked, so that the vertex picked at each time $t'$ has at
least $\tfrac14\alpha^+\mu pn$ neighbours in $\im(\phi_t)\cap S$, and in
particular in $\im(\phi_t)$, a set of size at most
$\Ca\Delta^{s+2}p^{-2}$. Let $Z$ be a superset of $\im(\phi_t)$ of size at
least $\Ca p^{-1}\log n$. Now the good event of
Proposition~\ref{prop:chernoff} states that in $\Gamma$ at most $\Ca
p^{-1}\log n$ vertices of $\Gamma$ have more than
$2p|Z|<\tfrac14\alpha^+\mu pn$ neighbours in $Z$. Since
$\tfrac14\Delta^{-s-2}\mu\alpha^+ pn>\Ca p^{-1}\log n$ by choice of $p$, this is a contradiction. We conclude that at each time $t$, the vertex $v_{t+1}$ picked at time $t$ satisfies $\big|N_G(v)\cap S\cap\im(\phi_t)\big|\le\tfrac12\alpha^+\mu pn$.

From this point on we consider a fixed time $t$, and write $v$ rather than $v_{t+1}$, and $\phi$ for $\phi_t$, and so on.

Since we cover at most $\Ca\Delta^{s+2} p^{-2}$ vertices, so we have $|S\setminus\im(\phi)|=(1\pm\tfrac12\eps)\mu n$. Now, to obtain the maximum subgraph of $G\big[(S\cup\{v\})\setminus\im(\phi)\big]$ with minimum degree $\big(\tfrac{k-1}{k}+\tfrac{\gamma}{4}\big)\mu pn$, we successively remove vertices whose degree is too small until no further remain. We claim that less than $\tfrac18\mu\alpha^+pn$ vertices are removed, and $v$ is not one of the vertices removed. To see this, observe that every vertex has at least $\big(\tfrac{k-1}{k}+\tfrac{\gamma}{2}\big)\mu pn$ neighbours in $S$ by~\eqref{eq:intS}. Suppose for a contradiction that there is a set $Z$ of $\tfrac18\mu\alpha^+pn$ vertices which are the first removed from $S$ in this process. Then each vertex of $Z$ has at least $\tfrac14\gamma\mu p n$ neighbours in $Z\cup\im(\phi)$, which by choice of $\alpha^+$ is a contradiction to the good event of Proposition~\ref{prop:chernoff}.

We conclude $\big|(S\cup\{v\})\setminus\im(\phi)\big|=(1\pm\eps)\mu
n$. Since $v$ has at least $\big(\tfrac{k-1}{k}+\tfrac{\gamma}{2}\big)\mu
pn$ neighbours in $S$, of which at most $\tfrac12\alpha^+\mu pn$ are in
$\im(\phi)$ and at most $|Z|$ are in $Z$, the vertex $v$ is not removed. Furthermore,
for each $i\in[s]$ we have $|Q_{v,i}\cap
V(G')\big|\ge\tfrac12\big|Q_{v,i}\cap S\big|$. We now use this to count
copies of $K_s$ in $N_{G'}(v)$. We choose for $i=1,\ldots,s$ sequentially
vertices in $Q_{v,i}\cap V(G')$, at each step choosing a vertex $w_i$ which is
adjacent to the previous vertices, and which is such that $w_1,\ldots,w_i$
have at least
$(d^+-\eps^+_{s-2})^ip^i|Q_{v,j}|$ common $G$-neighbours in each $Q_{v,j}$ for
$j>i$, and have $(1\pm\eps)^ip^i|Q_{v,j}|$ common $\Gamma$-neighbours in each $Q_{v,j}$ for
$j>i$, and the pair
\[\Big(\bigcap_{\ell\in[i]} N_{\Gamma}(w_\ell,Q_{v,j}), \bigcap_{\ell\in[i]} N_{\Gamma}(w_\ell,Q_{v,j'})\Big)\]
is $(\eps^+_i,d^+,p)$-regular in $G$ for each $i<j<j'\le s$. Note that all
these properties hold when $i=0$ vertices have been chosen. Assuming these
properties hold when we come to choose $w_i$, there are at least
$2^{1-i}(d^+)^{i-1}p^{i-1}|Q_{v,i}|$ vertices of $Q_{v,i}$ which are
adjacent to all previously chosen vertices. If $i=s$ then all of these are
valid choices. If $i<s$, by Propositions~\ref{prop:neighbourhood} and~\ref{prop:subpairs},
and because the good event of Proposition~\ref{prop:chernoff} holds, at most
\begin{equation*}
 s\cdot 4^i(d^+)^{1-i}\eps^+_{s-2}p^{i-1}|Q_{v,i}|+s\cdot\Ca p^{-1}\log n
\end{equation*}
vertices of $Q_{v,i}$ cause the numbers of $G$- or $\Gamma$-common
neighbours in some $Q_{v,j}$ for $j>i$ to go wrong. Finally, if $i=s-1$
then there is no choice of $i<j<j'\le s$ and so no failure of
regularity can occur, while if $i<s-1$ then by the good event of
Lemma~\ref{lem:TSRIL} the number of vertices which cause a failure of
regularity is at most $s^2\Ca p^{-2}\log n$. By choice of $\eps^+_{s-2}$
and $p$, in total at least $2^{-i}(d^+)^{i-1}p^{i-1}|Q_{v,i}|$ vertices of
$Q_{v,i}$ are thus valid choices for $w_i$. Finally, by choice of
$\gamma^+$ the total number of copies of $K_s$ in $N_{G'}(v)$ is at least
$2\gamma^+ p^{\binom{s}{2}}\big(p|S|\big)^s \ge \gamma^+ p^{\binom{s+1}{2}}(\mu n)^s$, as desired.

The remaining conditions of Lemma~\ref{lem:coverv} are simpler to check. By~\eqref{eq:intS} we have $\big|N_{G'}(W)\big| \le \big|N_\Gamma(W)\cap S\big|\le 2\mu n p^{|W|}$ for any $W \subset V(G')$ of size at most $\Delta$. The graph $G$ with the regular partition $(V_{i,j})_{i\in[r],j\in[k]}$, with reduced graph $R^k_r$, has the required minimum degree. By~\eqref{eq:intS} the intersection of the part $V_{i,j}$ with $S$ has size $(1\pm\tfrac12 \eps)\mu |V_{i,j}|$, so that $|V_{i,j}\cap V(G')|=(1\pm\eps)\mu|V_{i,j}|$ as required. Furthermore the regular pairs of $R$ intersected with $S$ are regular, and so by Proposition~\ref{prop:subpairs} the subpairs obtained by intersecting with $V(G')$ (which is, except for $v$, contained in $S$; and $v$ is in $V_0$ hence not in any of these pairs) are also sufficiently regular. Finally, the graph $H_{t+1}$ chosen at each time $t$ satisfies the conditions of Lemma~\ref{lem:coverv} by definition. Note that we can at each step choose $x_{t+1}$ and hence $H_{t+1}$ because there are at least $Cp^{-2}$ vertices of $F$ whose neighbourhood is coloured with at most $s$ colours; even after embedding all of $V_0$, the domain of $\phi$ contains at most $\Ca\Delta^{s+2}p^{-2}$ vertices, and hence at most $\Ca\Delta^{s+100k^2+3}p^{-2}<Cp^{-2}$ vertices of $H$ are too close to $\Dom(\phi)$.
\end{claimproof}

 We next define
 image restricting vertex sets and create an updated homomorphism
 $f^*:V(H')\to [r]\times[k]$. The former is easier. Let $X^{**}$ consist of the vertices of $H'$ which have at least one $H$-neighbour in $\Dom(\phi)$. The vertices of
 $\Dom(\phi)$ are partitioned according to the $x_t$ chosen at each time in
 Algorithm~\ref{alg:pre}, and because these vertices are chosen far apart
 in $H$, any vertex $y$ of $X^{**}$ is at
 distance $s+1$ from some $x_t$. The neighbours in $H'$ of $y$ are either also at
 distance $s+1$ in $H$ from $x_t$ and not adjacent to any vertices of
 $\Dom(\phi)$ corresponding to other $x_{t'}$, or they are not adjacent to
 any vertex of $\Dom(\phi)$ at all. It follows that for each $y\in X^{**}$ the quantities $f^{**}(y)$, $J_y$ and $I'_y$ are set exactly once in the running of Algorithm~\ref{alg:pre}. By Lemma~\ref{lem:coverv} and~\eqref{eq:intS}, given $y\in X^{**}$, we have $|I'_y|\ge 2\alpha p^{|J_y|}|(1-\eps)\mu|V_{f^{**}(y)}|$. We claim this implies
 \begin{equation}\label{eq:mainproof:sizeI}
  \big|N_G(J_y)\cap V_{f^{**}(y)}\big|\ge\alpha p^{|J_y|}\big|V_{f^{**}(y)}\big|\,.
 \end{equation}
 Indeed, suppose for a contradiction that~\eqref{eq:mainproof:sizeI} fails. Since $I'_y$ is by construction contained in $S$, we have $|I'_y|\le\big|N_G(J_y)\cap V_{f^{**}(y)}\cap S\big|$. Using~\eqref{eq:intS} to estimate the size of the latter set, we get
 \[|I'_y|\le (1\pm\eps\mu)\mu\cdot\alpha p^{|J_y|}\big|V_{f^{**}(y)}\big|+\tfrac{\eps\mu p^{|J_y|}n}{4kr}<2\alpha p^{|J_y|}|(1-\eps)\mu|V_{f^{**}(y)}|\,,\]
 where the final inequality is by choice of $\eps$ and since $|V_{f^{**}(y)}|\ge\tfrac{n}{4kr}$ by~\ref{main:Gsize}. This is in contradiction to the lower bound on $|I'_y|$ from Lemma~\ref{lem:coverv} stated above.

We construct the updated homomorphism as follows. We will have $f^*(y)=f(y)$
for all vertices which are not within distance $s+\binom{k+1}{2}$ of $\Dom(\phi)$ in
$H$. Given a vertex $x$ of $H$ chosen at some time $t$ in
Algorithm~\ref{alg:pre}, we set $f^*(y)$ for each $y$ at distance between $s+1$
and $s+\binom{k+1}{2}$ from $x$ in $H$ as follows. We will generate a collection
$Z_1,\ldots,Z_{\binom{k+1}{2}}$ of copies of $K_k$ in $R^k_r$, each labelled with the
integers $1,\ldots,k$. For each $i=1,\ldots,\binom{k+1}{2}$, if $y$ is at distance
$s+i$ from $x$ in $H$, then we set $f^*(y)$ to be the label $\sigma(y)$
cluster of $Z_i$. The properties of the sequence $Z_1,\ldots,Z_{\binom{k+1}{2}}$ we
require are the following. First, $Z_1$ is the clique returned by the
application of Lemma~\ref{lem:coverv} at $x$ with the labelling given by
that lemma. Second, $Z_{\binom{k+1}{2}}$ is the clique
$\big(V_{1,1},\dots,V_{1,k}\big)$, labelled $1,\ldots,k$ in that
order. Third, for each $i=2,\ldots,\binom{k+1}{2}$, each cluster of $Z_i$ is adjacent in $R^k_r$ to each
differently-labelled cluster of $Z_{i-1}$. Assuming such a sequence of cliques exists, the resulting $f^*$ has
the properties that each vertex $y$ of $X^{**}$ is assigned by
$f^*$ to $f^{**}(y)$, that each edge of $H'$ is mapped by $f^*$ to an edge
of $R^k_r$, and that $f$ and $f^*$ disagree on at most $\Ca p^{-2}\Delta^{s+\binom{k+1}{2}+3}$ vertices of $H'$, all in the first
$\sqrt{\beta}n$ vertices of $\cL$. These will be the properties we need of
$f^*$. Note that this definition is consistent, in that it does not attempt
to set $f^*(y)$ to two different clusters for any $y$, because the vertices
chosen at each step of Algorithm~\ref{alg:pre} are at pairwise distance at
least $100k^2$. It remains only to show that the desired sequence of cliques
always exists.
\begin{claim} For any $k$-cliques $Z_1$ and $Z_{\binom{k+1}{2}}$ in $R^k_r$ a sequence
  $Z_1,\ldots,Z_{\binom{k+1}{2}}$ with the above properties exists.
\end{claim}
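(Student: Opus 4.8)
The plan is to construct $Z_1,\dots,Z_{\binom{k+1}{2}}$ by recursion on $k$: at each level I force the current clique to agree with $Z_{\binom{k+1}{2}}$ in one more cluster, then recurse on the remaining clusters inside the neighbourhood in $R^k_r$ of the cluster just fixed. The only feature of $R^k_r$ I use is a consequence of its minimum degree: since $|V(R^k_r)|=kr$ and $\delta(R^k_r)\ge\big(\tfrac{k-1}{k}+\tfrac\gamma2\big)kr$, the common neighbourhood in $R^k_r$ of any $t\le k$ clusters has size at least $kr\big(1-t(\tfrac1k-\tfrac\gamma2)\big)$, which is $\ge r$ for $t\le k-1$ and $\ge\tfrac{\gamma k^2}{2}r$ for $t=k$; since $r$ is large this stays positive even after forbidding any fixed bounded set of clusters. (I also note that the link relation in the statement is symmetric in $Z_{i-1}\leftrightarrow Z_i$, so one may equally build the sequence from the end $Z_{\binom{k+1}{2}}$.)

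The basic move is: given a labelled $k$-clique $(z_1,\dots,z_k)$, an index $i$, and one further cluster $w$, replace $z_i$ by some $z_i'\in N_{R^k_r}(w)\cap\bigcap_{\ell\ne i}N_{R^k_r}(z_\ell)$ distinct from all $z_\ell$. Such a $z_i'$ exists by the common-neighbourhood bound (there are $k$ clusters to be adjacent to and only a bounded set to avoid), the new tuple is again a $k$-clique, and it is a legal successor of $(z_1,\dots,z_k)$: the unchanged clusters are adjacent to all clusters of $(z_1,\dots,z_k)$, while $z_i'$ is adjacent to $\{z_\ell:\ell\ne i\}$, which is exactly the set of clusters of $(z_1,\dots,z_k)$ of label $\ne i$. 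To fix the last coordinate of $Z_{\binom{k+1}{2}}=(b_1,\dots,b_k)$, I start from $Z_1=(a_1,\dots,a_k)$ and apply this move at positions $1,\dots,k-1$ in turn, each time with $w=b_k$; after these $k-1$ moves the clique is $(c_1,\dots,c_{k-1},a_k)$ with each $c_i$ adjacent to $b_k$, so a final move replaces $a_k$ by $b_k$ with no search needed. Now every cluster lies in $N_{R^k_r}(b_k)$, so inside the induced subgraph $R^k_r[N_{R^k_r}(b_k)]$ — whose common neighbourhoods of $\le k-1$ clusters are common neighbourhoods of $\le k$ clusters of $R^k_r$, hence nonempty — I recurse to join $(c_1,\dots,c_{k-1})$ to $(b_1,\dots,b_{k-1})$, appending $b_k$ to each clique produced. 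Appending $b_k$ keeps each tuple a $k$-clique and preserves the link property since all the recursion cliques lie in $N_{R^k_r}(b_k)$; the recursion stops at one cluster, where the link condition is vacuous.

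Unwinding, fixing coordinate $k$ costs $k$ moves, the level below costs $k-1$, and so on, for a total of $k+(k-1)+\dots+2=\binom{k+1}{2}-1$ moves plus the treatment of the innermost coordinate — at most $\binom{k+1}{2}$ cliques after a careful choice of base case; if fewer cliques are used, pad the sequence out to length $\binom{k+1}{2}$ by repeating $Z_{\binom{k+1}{2}}$, which is its own legal successor. The main obstacle I anticipate is exactly this bookkeeping: one must organise the recursion and, especially, its base case — for example stopping at two clusters, joined through a single intermediate edge chosen in the relevant common neighbourhoods, or ensuring that the last surviving cluster is already equal to its target — so that the budget of $\binom{k+1}{2}$ cliques is met rather than overshot. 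The remaining verifications (feasibility of each replacement and legality of appending clusters in the recursion) are immediate from the common-neighbourhood estimate.
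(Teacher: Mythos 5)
Your construction is essentially the paper's: the elementary move (replace one cluster by a common neighbour of the $k-1$ other current clusters and one target cluster, which exists by the minimum degree of $R^k_r$), the refresh-then-swap organisation, and the observation that swapping in a target cluster needs no adjacency to the same-labelled cluster it replaces are exactly what the paper does; you merely fix the coordinates in the opposite order and phrase the outer iteration as a recursion into the link of the cluster just fixed, and your feasibility and legality checks are correct. Concerning the count, which you rightly flag: your scheme, like the paper's, uses $k+(k-1)+\dots+2$ refresh/swap transitions plus one final free swap, i.e.\ $\binom{k+1}{2}$ transitions and hence $\binom{k+1}{2}+1$ cliques, and in fact the paper's own write-up contains the same off-by-one (its loops construct $\binom{k+1}{2}-1$ new cliques after $Z_1$ and then append the given target, even though the text asserts the last constructed clique is $Z_{\binom{k+1}{2}-1}$). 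This discrepancy is harmless: the only property of the constant used later is that it is a bounded function of $k$ well below the separation $100k^2$ between the pre-embedded roots, and when a construction finishes early one may pad by repeating the final clique, since a clique is a legal successor of itself, exactly as you note. Be warned, though, that the two specific base-case repairs you suggest do not work under the available hypothesis: joining two labelled edges through a single intermediate edge, or forcing the last surviving cluster to equal its target, would require a common neighbour of $k+1$ clusters of $R^k_r$, and $\delta(R^k_r)\ge\big(\tfrac{k-1}{k}+\tfrac{\gamma}{2}\big)kr$ guarantees common neighbours only of $k$ clusters when $\gamma$ is small. So rather than trying to meet the exact budget, either accept the length $\binom{k+1}{2}+1$ (nothing downstream changes) or leave the constant unspecified as a bounded function of $k$.
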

\begin{claimproof}
By the minimum degree of $R^k_r$, any $k$-set in $V(R^k_r)$ has at least
one common neighbour. We will use this fact at each step in the following
algorithm. Set $t=2$. We loop through
$j=1,\ldots,k-1$ sequentially. For each value of $j$ we perform the following operation.

For each
$i=j+1,\ldots,k$ sequentially, choose a cluster $w_{t}$ of $R^k_r$ which is adjacent
to all the clusters of $Z_{t-1}$ except possibly that labelled $i$, and
which is also adjacent to the cluster of $Z_{\binom{k+1}{2}}$ labelled $j$. We let
$Z_{t}$ be the clique obtained from $Z_{t-1}$ by replacing the label $i$
cluster with $w_t$, which we label $i$; all other clusters keep their
previous label. We increment $t$.

After performing the $i=k$ operation, we let $Z_{t}$ be obtained from $Z_{t-1}$ by
replacing the label $j$ cluster of $Z_{t-1}$ with the label $j$ cluster of
$Z_{\binom{k+1}{2}}$, and increment $t$. We now proceed with the next round of the
$j$-loop.

Observe that after the completion of each $j$-loop, the clusters of
$Z_{t-1}$ labelled $1,\ldots,j$ are the same as those of
$Z_{\binom{k+1}{2}}$. In particular the given $Z_{\binom{k+1}{2}}$ has the
required adjacencies in $Z_{\binom{k+1}{2}-1}$ (the final clique
constructed in the $j=k-1$ loop), while the remaining
required adjacencies hold by construction.
\end{claimproof}

At this point we complete the proof almost exactly as in~\cite{ABET}. What
follows is taken from there, with only trivial changes, for completeness' sake.

 For each $i\in[r]$ and $j\in[k]$, let $W'_{i,j}$ be the set of vertices $w\in V(H')$ with $f^*(w)\in V_{i,j}$, and let $X'$ consist of $X$ together with all vertices of $H'$ at $H$-distance $100k^2$ or less from some $x_t$ with $t\in[t^*]$. The total number of vertices $z\in V(H)$ at distance at most $100k^2$ from some $x_t$ is at most $2\Delta^{200k^2}|V_0|<\tfrac{1}{100}\xi n$. Since $W_{i,j}\symd W'_{i,j}$ contains only such vertices, we have
\begin{enumerate}[label=\itmarabp{H}{b}]
 \item\label{Hp:sizeWp} $m_{i,j}-\tfrac15\xi n\le |W'_{i,j}|\le m_{i,j}+\tfrac15\xi n$,
 \item\label{Hp:sizeX} $|X'| \leq 2\xi n$, 
 \item\label{Hp:edge} $\{f^*(x),f^*(y)\} \in E(R^k_r)$  for every $\{x,y\} \in E(H')$, and
 \item\label{Hp:special} $y,z\in \bigcup_{j'\in[k]}W'_{i,j'}$ for every $x\in W'_{i,j}\setminus X'$ and $xy,yz\in E(H')$.
\end{enumerate}
where~\ref{Hp:sizeX},~\ref{Hp:edge} and~\ref{Hp:special} hold by~\ref{H:sizeX} and definition of $X'$, by definition of $f^*$, and by~\ref{H:special} and choice of $X'$ respectively.

Furthermore, we have
\begin{enumerate}[label=\itmarabp{G}{a}]
\item $\frac{n}{4kr}\leq |\Vij| \leq \frac{4n}{kr}$ for every $i\in[r]$ and $j\in[k]$,
\item $\cV$ is $(\eps,d,p)_G$-regular on $R^k_r$ and $(\eps,d,p)_G$-super-regular on $K^k_r$,
\item both $\big(\NGa(v, V_{i,j}),V_{i',j'}\big)$ and $\big(\NGa(v, V_{i,j}),\NGa(v, V_{i',j'})\big)$ are $(\eps, d,p)_G$-regular pairs for every $\{(i,j),(i',j')\} \in E(R^k_r)$ and $v\in V\setminus V_0$, and
\item $|\NGa(v,V_{i,j})| = (1\pm \eps)p|\Vij|$ for every $i \in [r]$, $j\in [k]$ and every $v \in V \setminus V_0$.
 \item\label{main:GpI} $\big|V_{f^*(x)}\cap\bigcap_{u\in J_x}N_G(u)\big|\ge\alpha p^{|J_x|}|V_{f^*(x)}|$ for each $x\in V(H')$,
 \item\label{main:GpGI} $\big|V_{f^*(x)}\cap\bigcap_{u\in J_x}N_\Gamma(u)\big|=(1\pm\eps^*)p^{|J_x|}|V_{f^*(x)}|$  for each $x\in V(H')$, and
 \item\label{main:GpIreg} $\big(V_{f^*(x)}\cap\bigcap_{u\in J_x}N_\Gamma(u),V_{f^*(y)}\cap\bigcap_{v\in J_y}N_\Gamma(v)\big)$ is $(\eps^*,d,p)_G$-regular for each $xy\in E(H')$.
 \item\label{main:GaI} $\big|\bigcap_{u\in J_x}N_\Gamma(u)\big|\le(1+\epsa) p^{|J_x|}n$ for each $x\in V(H')$,
\end{enumerate}
Properties~\ref{main:Gsize} to~\ref{main:Ggam} are repeated for convenience
(replacing $\eps^-$ with the larger $\eps$). Properties~\ref{main:GpI},~\ref{main:GpGI} and~\ref{main:GaI}, are trivial when $J_x=\emptyset$. Otherwise,~\ref{main:GpI} is guaranteed by~\eqref{eq:mainproof:sizeI}, and~\ref{main:GpGI} and~\ref{main:GaI} are guaranteed by Lemma~\ref{lem:coverv}. Finally~\ref{main:GpIreg} follows from~\ref{main:Greg} when $J_x,J_y=\emptyset$, and otherwise is guaranteed by Lemma~\ref{lem:coverv}, as follows. If both $J_x$ and $J_y$ are non-empty, then~\ref{item:pel-tsril} states that the desired pair is $(\epsa,d,p)_G$-regular. If $J_x$ is empty and $J_y$ is not, then necessarily $|J_x|\le\Delta-1$, and by~\ref{item:pel-osril} the pair $\big(V_{f^*(x)}\cap\bigcap_{u\in J_x}N_\Gamma(u),V_{f^*(y)}\big)$ is $(\eps^*,d,p)_G$-regular.

For each $i\in[r]$ and $j\in[k]$, let $V'_{i,j}=V_{i,j}\setminus\im(\phi_{t^*})$, and let $\cV'=\{V'_{i,j}\}_{i\in[r],j\in[k]}$. Because $V_{i,j}\setminus V'_{i,j}\subset S$ for each $i\in[r]$ and $j\in[k]$, using~\eqref{eq:intS} and Proposition~\ref{prop:subpairs}, and our choice of $\mu$, we obtain
\begin{enumerate}[label=\itmarabp{G}{b}]
\item\label{Gp:sizeV} $\frac{n}{6kr}\leq |V'_{i,j}| \leq \frac{6n}{kr}$ for every $i\in[r]$ and $j\in[k]$,
\item\label{Gp:Greg} $\cV'$ is $(2\eps,d,p)_G$-regular on $R^k_r$ and $(2\eps,d,p)_G$-super-regular on $K^k_r$,
\item\label{Gp:Ginh} both $\big(\NGa(v, V'_{i,j}),V'_{i',j'}\big)$ and $\big(\NGa(v, V'_{i,j}),\NGa(v, V'_{i',j'})\big)$ are $(2\eps, d,p)_G$-regular pairs for every $\{(i,j),(i',j')\} \in E(R^k_r)$ and $v\in V\setminus V_0$, and
\item\label{Gp:GsGa} $|\NGa(v,V'_{i,j})| = (1 \pm 2\eps)p|V_{i,j}|$ for every $i \in [r]$, $j\in [k]$ and every $v \in V \setminus V_0$.
 \item\label{Gp:sizeI} $\big|V'_{f^*(x)}\cap\bigcap_{u\in J_x}N_G(u)\big|\ge\tfrac12\alpha p^{|J_x|}|V'_{f^*(x)}|$,
 \item\label{Gp:sizeGa} $\big|V'_{f^*(x)}\cap\bigcap_{u\in J_x}N_\Gamma(u)\big|=(1\pm2\eps^*)p^{|J_x|}|V'_{f^*(x)}|$, and
 \item\label{Gp:Ireg} $\big(V'_{f^*(x)}\cap\bigcap_{u\in J_x}N_\Gamma(u),V'_{f^*(y)}\cap\bigcap_{v\in J_y}N_\Gamma(v)\big)$ is $(2\eps^*,d,p)_G$-regular.
 \item\label{Gp:GaI} $\big|\bigcap_{u\in J_x}N_\Gamma(u)\big|\le(1+2\epsa) p^{|J_x|}n$ for each $x\in V(H')$,
\end{enumerate}

 We are now almost finished. The only remaining problem is that we do not necessarily have $|W'_{i,j}|=|V'_{i,j}|$ for each $i\in[r]$ and $j\in[k]$. Since $|V'_{i,j}|=|V_{i,j}|\pm 2\Delta^{200k^2}|V_0|=m_{i,j}\pm 3\Delta^{200k^2}|V_0|$, by~\ref{Hp:sizeWp} we have $|V'_{i,j}|=|W'_{i,j}|\pm \xi n$. We can thus apply Lemma~\ref{lem:balancing}, with input $k$, $r_1$, $\Delta$, $\gamma$, $d$, $8\eps$, and $r$. This gives us sets $V''_{i,j}$ with $|V''_{i,j}|=|W'_{i,j}|$ for each $i\in[r]$ and $j\in[k]$ by~\ref{lembalancing:sizesout}. Let $\cV''=\{V''_{i,j}\}_{i\in[r],j\in[k]}$. Lemma~\ref{lem:balancing} guarantees us the following.
\begin{enumerate}[label=\itmarabp{G}{c}]
\item\label{Gpp:sizeV} $\frac{n}{8kr}\leq |V''_{i,j}| \leq \frac{8n}{kr}$ for every $i\in[r]$ and $j\in[k]$,
\item\label{Gpp:Greg} $\cV''$ is $(4\epsa,d,p)_G$-regular on $R^k_r$ and $(4\epsa,d,p)_G$-super-regular on $K^k_r$,
\item\label{Gpp:Ginh} both $\big(\NGa(v, V''_{i,j}),V''_{i',j'}\big)$ and $\big(\NGa(v, V''_{i,j}),\NGa(v, V''_{i',j'})\big)$ are $(4\epsa, d,p)_G$-regular pairs for every $\{(i,j),(i',j')\} \in E(R^k_r)$ and $v\in V\setminus V_0$, and
\item\label{Gpp:GsGa} we have 
 $(1-4\eps)p|V''_{i,j}| \leq |\NGa(v,V''_{i,j})| \leq (1 + 4\eps)p|V''_{i,j}|$ for every $i \in [r]$, $j\in [k]$ and every $v \in V \setminus V_0$.
 \item\label{Gpp:sizeI} $\big|V''_{f^*(x)}\cap\bigcap_{u\in J_x}N_G(u)\big|\ge\tfrac14\alpha p^{|J_x|}|V''_{f^*(x)}|$,
 \item\label{Gpp:sizeGa} $\big|V''_{f^*(x)}\cap\bigcap_{u\in J_x}N_\Gamma(u)\big|=(1\pm4\eps^*)p^{|J_x|}|V'_{f^*(x)}|$, and
 \item\label{Gpp:Ireg} $\big(V''_{f^*(x)}\cap\bigcap_{u\in J_x}N_\Gamma(u),V''_{f^*(y)}\cap\bigcap_{v\in J_y}N_\Gamma(v)\big)$ is $(4\eps^*,d,p)_G$-regular.
\end{enumerate} 
Here~\ref{Gpp:sizeV} comes from~\ref{Gp:sizeV} and~\ref{lembalancing:symd}, while~\ref{Gpp:Greg} comes from~\ref{lembalancing:regular} and choice of $\eps$. \ref{Gpp:Ginh} is guaranteed by~\ref{lembalancing:inheritance}. Now, each of~\ref{Gpp:GsGa},~\ref{Gpp:sizeI} and~\ref{Gpp:sizeGa} comes from the corresponding~\ref{Gp:GsGa},~\ref{Gp:sizeI} and~\ref{Gp:sizeGa} together with~\ref{lembalancing:gammaout}. Finally,~\ref{Gpp:Ireg} comes from~\ref{Gp:Ireg} and~\ref{Gp:GaI} together with Proposition~\ref{prop:subpairs} and~\ref{lembalancing:gammaout}.

For each $x\in V(H')$ with $J_x=\emptyset$, let $I_x=V''_{f^*(x)}$. For each $x\in V(H')$ with $J_x\neq\emptyset$, let $I_x=V''_{f^*(x)}\cap\bigcap_{u\in J_x}N_G(u)$. Now $\cW'$ and $\cV''$ are $\kappa$-balanced by~\ref{Gpp:sizeV}, size-compatible by construction, partitions of respectively $V(H')$ and $V(G)\setminus\im(\phi_{t^*})$, with parts of size at least $n/(\kappa r_1)$ by~\ref{Gpp:sizeV}. Letting $\widetilde{W}_{i,j}:=W'_{i,j}\setminus X'$, by~\ref{Hp:sizeX}, choice of $\xi$, and~\ref{Hp:special}, $\{\widetilde{W}_{i,j}\}_{i\in[r],j\in[k]}$ is a $\big(\vartheta,K^k_r\big)$-buffer for $H'$. Furthermore since $f^*$ is a graph homomorphism from $H'$ to $R^k_r$, we have~\ref{itm:blowup:H}. By~\ref{Gpp:Greg},~\ref{Gpp:Ginh} and~\ref{Gpp:GsGa} we have~\ref{itm:blowup:G}, with $R=R^k_r$ and $R'=K^k_r$. Finally, the pair $(\cI,\cJ)=\big(\{I_x\}_{x\in V(H')},\{J_x\}_{x\in V(H')}\big)$ form a $\big(\rho,\tfrac14\alpha,\Delta,\Delta\big)$-restriction pair. To see this, observe that the total number of image restricted vertices in $H'$ is at most $\Delta^2|V_0|<\rho|V_{i,j}|$ for any $i\in[r]$ and $j\in[k]$, giving~\ref{itm:restrict:numres}. Since for each $x\in V(H')$ we have $|J_x|+\deg_{H'}(x)=\deg_H(x)\le\Delta$ we have~\ref{itm:restrict:Jx}, while~\ref{itm:restrict:sizeIx} follows from~\ref{Gpp:sizeI}, and~\ref{itm:restrict:sizeGa} follows from~\ref{Gpp:sizeGa}. Finally,~\ref{itm:restrict:Ireg} follows from~\ref{Gpp:Ireg}, and~\ref{itm:restrict:DJ} follows since $\Delta(H)\le\Delta$. Together this gives~\ref{itm:blowup:restrict}. Thus, by Lemma~\ref{thm:blowup} there exists an embedding $\phi$ of $H'$ into $G\setminus\im(\phi_{t^*})$, such that $\phi(x)\in I_x$ for each $x\in V(H')$. Finally, $\phi\cup\phi_{t^*}$ is an embedding of $H$ in $G$, as desired.
\end{proof}

\section{Concluding remarks}
\label{sec:remarks}
\subsection{Optimality of Theorem~\ref{thm:main}}
In Theorems~\ref{thm:abet} and~\ref{thm:main}, the requirement for $C^\ast
p^{-2}$ vertices in $H$ whose neighbourhood contains few colours is optimal
up to the value of $C^\ast$. However the value of $C^\ast$ we obtain
derives from (multiple applications of) the sparse regularity lemma and is
hence very far from optimal. One can use the methods of this
paper to obtain an improved (but still far from sharp) constant, and we expect that one can use the
methods of this paper to determine an optimal $C^\ast$ asymptotically, at
least for special cases.

The way to obtain this improvement is the following. We work exactly as in
the proof of Theorem~\ref{thm:maink}, except that for each $v\in V(G)$ we
identify the largest $1\le s\le k-1$ for which there are many copies of $K_s$
in $N_G(v)$, and obtain a robust witness for this property as in that
proof. Now when we come to cover the vertices of the set $V_0$ returned by
Lemma~\ref{lem:G}, we use vertices from zero-free regions of $\cL$ which
are not in the first few vertices of $\cL$ whenever possible: in particular
this is always possible when we are to cover a vertex which is in many
copies of $K_k$. Our proof, with trivial modification, shows
that this pre-embedding method succeeds. The result is that we can reduce
$\Ca$ to a quantity on the order of $\Delta^{100k^2}$; this number comes
from our requirement to choose vertices in $\cL$ which are widely separated
in $H$ for the pre-embedding onto the vertices of $V_0$ which are not in many copies of $K_{k}$.

When $H$ contains many isolated vertices, this requirement disappears and
we can further improve. We believe (but have not attempted to prove) that there is some $C_k$ with the following property. Let $\Gamma$ be a typical instance of $G(n,p)$, where $p\gg n^{-1/k}$. Suppose $G\subset\Gamma$ has minimum degree $\big(\tfrac{k-1}{k}+o(1)\big)pn$. Then any choice of $G$ contains at most $\big(C_k+o(1)\big)p^{-2}$ vertices which are in $o\big( p^{\binom{k}{2}}n^{k-1}\big)$ copies of $K_k$; on the other hand there is a choice of $G$ which has $\big(C_k-o(1)\big)p^{-2}$ vertices not in any copy of $K_k$.

Assuming the above statement to be true, it follows that $C_k$ is the
asymptotically optimal $C^\ast$ whenever all vertices of $H$ are either
isolated or contained in a copy of $K_k$; for example when $H$ consists of
a $(k-1)$st power of a cycle together with some isolated vertices. Further
generalisation to (for example) try to establish an optimal value of
$C^\ast$ in Theorem~\ref{thm:abet} would be possible; but it would also
presumably depend on the graph structure of $H$. If the vertices of $H$
which are not in triangles are far apart in $H$, then the generalisation is
easy (and the answer is the same) but if they are not generally far apart
it seems likely that one would have to use several such vertices to cover
one badly-behaved vertex of $G$, and hence $C^\ast$ would need to be larger
than the above $C_k$.

\subsection{Local colourings of~$H$ versus global colourings}
\label{sec:construct}
Recall that Theorem~\ref{thm:abet} requires some vertices in $H$ to have neighbourhoods which contain no edges, and that this is necessary because otherwise we can `locally' avoid $H$-containment simply by picking a vertex of $G(n,p)$ and removing all edges in its neighbourhood to form $G$. Theorem~\ref{thm:main} implies that, when $H$ is $3$-colourable, this is really the only obstruction: if we insist that every vertex of $G$ has a reasonable number of edges in its neighbourhood, then $G$ contains all $3$-colourable $H$ with small bandwidth and maximum degree.

It is natural to guess that a similar `local' obstruction generalises:
perhaps for every $k$, if $H$ is a $k$-colourable graph with small
bandwidth and constant maximum degree which has $\Omega(p^{-2})$ vertices whose neighbourhoods are bipartite, then $H$ is guaranteed to be contained in any subgraph $G$ of $G(n,p)$ with sufficiently high minimum degree and in which every vertex neighbourhood has a reasonable number of edges.

The purpose of this section is to observe that the above guess is
false. Indeed, one cannot merely consider the chromatic number of vertex
neighbourhoods, but really has to take into account
the number of colours used on vertex neighbourhoods in the whole $k$-colouring of $H$ (as in the statement of Theorem~\ref{thm:main}).

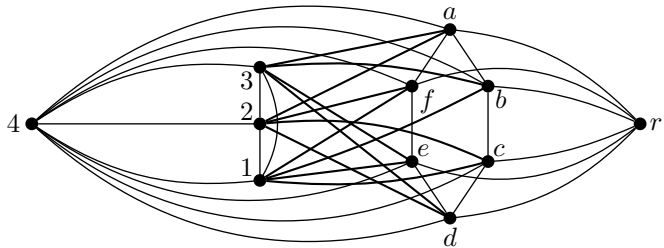
\begin{figure}[t]
\tikzstyle{every node}=[circle, draw, fill=black,
                        inner sep=0pt, minimum width=4pt]
\begin{tikzpicture}[thick,scale=1]
    \node at (-2,0) (4) [label=left:$4$] {} ;
    \node at (1,-0.75) (1) [label=above left:$1$] {} ;
    \node at (1,0) (2) [label=above left:$2$] {} ;
    \node at (1,0.75) (3) [label=below left:$3$] {} ;
    \node at (3.5,1.25) (a) [label=above:$a$] {} ;
    \node at (4,0.5) (b) [label=below right:$b$] {} ;
    \node at (4,-0.5) (c) [label=above right:$c$] {} ;
    \node at (3.5,-1.25) (d) [label=below:$d$] {} ;
    \node at (3,-0.5) (e) [label=above right:$e$] {} ;
    \node at (3,0.5) (f) [label=below right:$f$] {} ;
    \node at (6,0) (r) [label=right:$r$] {} ;
    \path[draw] (4) edge [line width=0.5pt,bend right=20] (1.center) ;
    \path[draw] (4) edge [line width=0.5pt] (2.center) ;
    \path[draw] (4) edge [line width=0.5pt,bend left=20] (3.center) ;
    \path[draw] (1) edge [line width=0.5pt] (2.center) ;
    \path[draw] (2) edge [line width=0.5pt] (3.center) ;
    \path[draw] (3) edge [line width=0.5pt,bend left] (1.center) ;
    \path[draw] (a) edge [line width=0.5pt] (b.center) ;
    \path[draw] (b) edge [line width=0.5pt] (c.center) ;
    \path[draw] (c) edge [line width=0.5pt] (d.center) ;
    \path[draw] (d) edge [line width=0.5pt] (e.center) ;
    \path[draw] (e) edge [line width=0.5pt] (f.center) ;
    \path[draw] (f) edge [line width=0.5pt] (a.center) ;
    \path[draw] (r) edge [line width=0.5pt,bend right=20] (a.center) ;
    \path[draw] (r) edge [line width=0.5pt,bend right=10] (b.center) ;
    \path[draw] (r) edge [line width=0.5pt,bend left=10] (c.center) ;
    \path[draw] (r) edge [line width=0.5pt,bend left=20] (d.center) ;
    \path[draw] (r) edge [line width=0.5pt,bend left] (e.center) ;
    \path[draw] (r) edge [line width=0.5pt,bend right] (f.center) ;
    \path[draw] (4) edge [line width=0.5pt,bend left] (a.center) ;
    \path[draw] (4) edge [line width=0.5pt,bend left=35] (b.center) ;
    \path[draw] (4) edge [line width=0.5pt,bend right=35] (c.center) ;
    \path[draw] (4) edge [line width=0.5pt,bend right] (d.center) ;
    \path[draw] (4) edge [line width=0.5pt,bend right] (e.center) ;
    \path[draw] (4) edge [line width=0.5pt,bend left] (f.center) ;
    \path[draw] (1) edge [line width=0.8pt,bend right=5] (b.center) ;
    \path[draw] (1) edge [line width=0.8pt,bend right=10] (c.center) ;
    \path[draw] (1) edge [line width=0.8pt] (e.center) ;
    \path[draw] (1) edge [line width=0.8pt] (f.center) ;
    \path[draw] (2) edge [line width=0.8pt] (a.center) ;
    \path[draw] (2) edge [line width=0.8pt,bend left=15] (c.center) ;
    \path[draw] (2) edge [line width=0.8pt] (d.center) ;
    \path[draw] (2) edge [line width=0.8pt] (f.center) ;
    \path[draw] (3) edge [line width=0.8pt] (a.center) ;
    \path[draw] (3) edge [line width=0.8pt,bend left=10] (b.center) ;
    \path[draw] (3) edge [line width=0.8pt] (d.center) ;
    \path[draw] (3) edge [line width=0.8pt] (e.center) ;
\end{tikzpicture}
\caption{The graph $F$}\label{fig:date}
\end{figure}

Consider the following graph $F$ (see Figure~\ref{fig:date}). We begin with vertices $1,2,3,4$ which form a clique, and vertices $a,b,c,d,e,f$ which form a cycle of length six (in that order). We join $4$ to all of $a,b,c,d,e,f$, we join $1$ to $b,c,e,f$, and $2$ to $a,c,d,f$, and $3$ to $a,b,d,e$. Finally we add a vertex $r$ adjacent to $a,b,c,d,e,f$.

This graph has the following properties. It is $4$-colourable and in any
$4$-colouring the vertices $a,d$ have the same colour as $1$, the vertices $b,e$ have the same colour as $2$, and $c,f$ have the same colour as $3$. All vertices except $r$ are in a copy of $K_4$. The neighbourhood of $r$ is a cycle of order $6$, which is bipartite.

Given $n$ divisible by $11$, we let $H$ consist of $n/11$ disjoint copies of $F$. By Theorem~\ref{thm:main}, with $s=3$, if $G$ is a subgraph of a typical random graph $\Gamma=G(n,p)$, where $p\gg\big(\tfrac{\log n}{n}\big)^{1/9}$, such that $\delta(G)\ge\big(\tfrac{3}{4}+\gamma\big)pn$, and in addition the neighbourhood of every vertex of $G$ contains at least $\gamma p^9n^3$ copies of $K_3$, then we have $H\subset G$.

Observe that we cannot take $s$ smaller than $3$, since in every $4$-colouring of $F$ every vertex has three different colours in its neighbourhood, including $r$. This is why Theorem~\ref{thm:abet} requires many copies of $K_3$ in every vertex neighbourhood. However the neighbourhood of $r$ itself is $K_3$-free, and in fact bipartite. We now give a construction that shows that it is not enough for every vertex neighbourhood to contain many edges (or indeed many copies of $C_6$).

We begin by selecting (for some small $\eps>0$) a set $X$ of $\eps p^{-1}$ vertices, and then generating $\Gamma=G(n,p)$. With high probability no vertex of $X$ has more than $\log n$ neighbours in $X$, and the joint neighbourhood $Y$ of $X$ has size at most $2\eps n$. We randomly partition $Y=Y_1\cup Y_2$ into two equal parts, and we randomly partition $Z:=V(\Gamma)\setminus(X\cup Y)$ into five equal parts $Z_1,\dots,Z_5$.

We let $G$ be the subgraph of $\Gamma$ obtained by taking all edges from $X$ to $Y$, all edges between $Y_1$ and $Y_2$, all edges from $Y_1$ to $Z\setminus Z_1$ and from $Y_2$ to $Z\setminus Z_2$, and all edges within $Z$ which are not contained in any $Z_i$. It is easy to check that with high probability $G$ has minimum degree roughly $\tfrac45pn$, and that neighbourhoods of all vertices contain many edges (and many copies of $C_6$).

However we claim $G$ does not contain $H$. Indeed, consider any $x\in X$. Since $N_G(x)$ is contained in $Y$, the graph $G\big[N_G(x)\big]$ is bipartite, so that any copy of $F$ using $x$ must place $r\in F$ on $x$. Furthermore, the vertices $a,b,c,d,e,f$ must be placed alternating in $Y_1$ and $Y_2$. Without loss of generality suppose $a,c,e\in Y_1$ and $b,d,f\in Y_2$. Now each of $1,2,3,4$ has at least one neighbour in $\{a,c,e\}$, and at least one neighbour in $\{b,d,f\}$, so that none of $1,2,3,4$ can be placed in $Y$, or in $Z_1$, or in $Z_2$. It follows that none of $1,2,3,4$ can be placed in $X$ (since all neighbours of vertices in $X$ are in $Y$), and so all of $1,2,3,4$ must be in $Z_3\cup Z_4\cup Z_5$. But $1,2,3,4$ form a copy of $K_4$ in $F$, and  $Z_3\cup Z_4\cup Z_5$ induces a tripartite subgraph of $G$, a contradiction.

In this example we cannot have $F$-copies at any vertex of $X$, so the best we can do is find $\tfrac{n}{v(F)}-\Omega(p^{-1})$ vertex-disjoint copies of $F$. This may be asymptotically optimal; we have not investigated this problem. We note also that it is straightforward to generalise this construction to higher chromatic numbers $k$: we add to $F$ further numbered vertices $5,\dots,k$, adjacent to all other vertices but $r$; and we partition $Z$ into $k+1$ parts.

\bibliographystyle{abbrv}
\bibliography{references}

\end{document}